\newtheorem{theorem}{Theorem}[section]
\theoremstyle{plain}
\newtheorem{remark}{Remark}[section]
\numberwithin{equation}{section}
\begin{document}
\title[EDSs, SQUARES AND\ CUBES ]{ELLIPTIC DIVISIBILITY SEQUENCES, SQUARES
AND\ CUBES}
\author{BET\"{U}L GEZER }
\address{Uludag University, Faculty of Science, Department of Mathematics, G%
\"{o}r\"{u}kle, 16059, Bursa-TURKEY}
\email{betulgezer@uludag.edu.tr}
\maketitle

\begin{abstract}
Elliptic divisibility sequences (EDSs) are generalizations of a class of
integer divisibility sequences called Lucas sequences. There has been much
interest in cases where the terms of Lucas sequences are squares or cubes.
In this work, using the Tate normal form having one parameter of elliptic
curves with torsion points, the general terms and periods of all elliptic
divisibility sequences with a zero term are given in terms of this parameter
by means of Mazur's theorem, and it is shown that which term of $h_{n}$ of
an EDS can be a square or a cube by using the general terms of these
sequences.
\end{abstract}

\begin{flushleft}
{\footnotesize \textbf{AMS Subject Classification 2010: }11B37, 11B39,
11B83, 11G05.\newline
\textbf{Keywords:} }Elliptic curves, torsion points, Tate normal forms,
elliptic divisibility sequences, squares, cubes.

{\footnotesize \textbf{Date:} 21. 09. 2011}
\end{flushleft}

\section{Introduction}

A\textit{\ divisibility sequence} is a sequence $(h_{n})~(n\in {\mathbb{N}})$
of integers with the property that ~$h_{n}|h_{m}~$if $n|m$. There are also
elliptic divisibility sequences satisfying a nonlinear recurrence relation
that comes from the recursion formula for elliptic division polynomials
associated to an elliptic curve.

An\textit{\ elliptic divisibility sequence} (EDS) is a sequence $(h_{n})$ of
integers satisfying a nonlinear recurrence relation%
\begin{equation}
h_{m+n}h_{m-n}=h_{m+1}h_{m-1}h_{n~}^{2}-h_{n+1}h_{n-1}h_{m~}^{2}  \label{1.1}
\end{equation}%
and such that $h_{n}$ divides $h_{m}$ whenever $n$ divides $m$ for all $%
m\geq n\geq 1$. The recurrence relation (\ref{1.1}) is less straightforward
than a linear recurrence.

EDSs are generalizations of a class of integer divisibility sequences called 
\textit{Lucas sequences}. EDSs are interesting because they were the first
nonlinear divisibility sequences to be studied. Morgan Ward wrote several
papers detailing the arithmetic theory of EDSs \cite{MW1, MW2}.

In order to calculate terms, there are two useful formulas (known as \textit{%
duplication formulas}) which are obtained from (\ref{1.1}) by setting first $%
m=n+1$, $n=m$ and then $m=n+1$, $n=m-1$:%
\begin{eqnarray}
~h_{2n+1} &=&h_{n+2}h_{n}^{3}~-h_{n-1}h_{n+1}^{3}\text{,} \\
h_{2n}h_{2} &=&h_{n}(h_{n+2}h_{n-1}^{2}-h_{n-2}h_{n+1}^{2})\text{ }
\end{eqnarray}%
for all $n\in {\mathbb{N}}$. A solution of (\ref{1.1}) is proper if $%
h_{0}=0,h_{1}=1,$ and $h_{2}h_{3}\neq 0.$ Such a proper solution will be an
EDS if and only if \ $h_{2},h_{3},h_{4}$ are integers with $h_{2}|h_{4}$.
The sequence $(h_{n})$ with initial values $h_{0}=0,h_{1}=1,h_{2},h_{3}$ and 
$h_{4}$ is denoted by $[1;$ $h_{2};$ $h_{3};$ $h_{4}]$.

Note that in order to compute the term $h_{n}$ of the elliptic divisibility
sequence $(h_{n})$ by any of the formulas above we need to know all previous
terms before $h_{n}$, but this would be very wasteful.

Ward gave formulas for a very special case of the EDSs whose second or third
term is zero which are called \textit{improper} \textit{sequences}. To over
come this problem we give general terms of the EDSs with zero terms. This
will also help us to determine the square and cube terms in these sequences
as described in the following sections.

One of the well known theorems in the theory of elliptic curves is Mazur's
theorem which states that there is no points of order $11$ and greater than $%
12$ on an elliptic curve over $%
\mathbb{Q}
$. Hence, the rank of the elliptic divisibility sequences associated to
elliptic curves with points of finite order can not be $11$ and greater than 
$12$. Therefore, any of second, ..., tenth and twelfth terms of an elliptic
divisibility sequence can be zero.

In this work we will answer the following questions:

\begin{itemize}
\item What are the initial values of the EDSs with second, ..., tenth and
twelfth term zero?

\item Is there any formula to calculate the terms of the EDSs with second,
..., tenth and twelfth term zero which is more useful than relations above,
i.e., what are the general terms of these sequences?

\item What are the periods of these sequences?

\item Which terms of these sequences can be a square or a cube?
\end{itemize}

The first two questions are discussed in Section 3. The initial values of
the EDSs with zero terms and the general terms of these sequences are given
in Theorem 3.1, and Theorem 3.2, respectively. The third question is
considered in Section 4, and the periods of these sequences are given in
Theorem 4.2. In Section 5, the question of when a term of an elliptic
divisibility sequence with zero terms can be a square or a cube was
discussed in detail.

\section{Some preliminaries on elliptic curves and EDSs}

An \textit{elliptic curve} over $%
\mathbb{Q}
$, is the set of solutions to an equation of the normal form, or generalized
Weierstrass form,%
\begin{equation}
E:y^{2}+a_{1}xy+a_{3}y=x^{3}+a_{2}x^{2}+a_{4}x+a_{6}  \label{01}
\end{equation}%
with coefficients $a_{1},...,a_{6}$ in $%
\mathbb{Q}
$. The set of all solutions $(x,y)\in 
\mathbb{Q}
\times 
\mathbb{Q}
$ to the equation (\ref{01}) together with the point $\mathbf{O}$, called
the \textit{point at infinity}, is denoted by $E(%
\mathbb{Q}
{\mathbb{)}}$ and called the set of $%
\mathbb{Q}
$-\textit{rational points }on $E$. The set of $%
\mathbb{Q}
$-points on $E$ forms an abelian subgroup of $E$ known as the Mordell-Weil
group of $E$. For more details on elliptic curves in general, see \cite%
{JS,JS1}. One of the most important theorems in the theory of elliptic
curves is Mordell-Weil theorem, which implies that, if ${\mathbb{K}}$ is a
number field containing $%
\mathbb{Q}
$, then $E({\mathbb{K)}}$ is a finitely generated abelian group. Also, the
Mordell-Weil theorem shows that $E_{tors}({\mathbb{K)}}$, the \textit{%
torsion subgroup} of $E({\mathbb{K)}}$, is finitely generated and abelian,
hence it is finite, since its generators are of finite order. It is always
interesting to characterize the torsion subgroup of a given elliptic curve.
The question of a uniform bound on $E_{tors}(%
\mathbb{Q}
{\mathbb{)}}$ was studied from the point of view of modular curves by
Shimura, Ogg, and others. In 1976, B. Mazur proved the following strongest
result which had been conjectured by Ogg:

\begin{theorem}
(Mazur\cite{BM}) Let $E$ be an elliptic curve defined over $%
\mathbb{Q}
$. Then the torsion subgroup $E_{tors}(%
\mathbb{Q}
{\mathbb{)}}$ is either isomorphic to $%
\mathbb{Z}
/N%
\mathbb{Z}
$ for $N=1,2,...,10,12$ or to $%
\mathbb{Z}
/2%
\mathbb{Z}
\times 
\mathbb{Z}
/2N%
\mathbb{Z}
$ for $N=1,2,3,4$. Further, each of these groups does occur as an $E_{tors}(%
\mathbb{Q}
{\mathbb{)}}$.
\end{theorem}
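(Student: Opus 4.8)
The plan is to treat the two assertions separately: that each listed group does occur, and that no other group can occur, the latter being by far the deeper. For the occurrence statement I would produce, for each group on the list, an explicit one-parameter family of elliptic curves realizing the prescribed torsion at all but finitely many rational values of the parameter. The Tate normal form employed elsewhere in this paper is exactly suited to this: demanding that a marked point have order $N$ imposes relations that reduce the free Tate parameters to a single rational parameter, and specializing it to rational values yields infinitely many curves with the desired torsion. This settles occurrence for every $\mathbb{Z}/N\mathbb{Z}$ with $N \in \{1,\dots,10,12\}$ and every $\mathbb{Z}/2\mathbb{Z}\times\mathbb{Z}/2N\mathbb{Z}$ with $N \in \{1,2,3,4\}$.

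For the classification, the guiding principle is to convert a rational point of exact order $N$ on some $E$ into a rational non-cuspidal point on the modular curve $X_1(N)$, the moduli space of pairs $(E,P)$ with $P$ of order $N$; the task becomes the determination of $X_1(N)(\mathbb{Q})$. The genus of $X_1(N)$ controls the outcome: it is $0$ precisely for $N \in \{1,\dots,10,12\}$, and in each such case $X_1(N)$ is rational with infinitely many rational points, which both reproves occurrence and shows why these are exactly the surviving cyclic groups. For every other $N$ the genus is at least $1$, and the classification reduces to showing that the only rational points are cusps.

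A divisibility reduction now shortens the work: a rational point of order $N$ produces one of order $p$ for each prime $p \mid N$, so it is enough to rule out rational points of prime order $p \ge 11$ together with the prime-power and composite orders (such as $N = 16, 18, 25$) that lie outside the list, each of which already has $X_1(N)$ of positive genus. The genus-one values $N = 11, 14, 15$ can be handled directly, since $X_1(N)$ is then an elliptic curve over $\mathbb{Q}$ whose Mordell-Weil group is finite and consists only of cusps, verified by explicit descent and reduction. The genus-two case $N = 13$ and the infinitely many primes $p \ge 17$ form the crux.

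I expect the principal obstacle to be precisely the elimination of rational points of order $p$ for arbitrarily large primes, where $X_1(p)$ and $X_0(p)$ have unbounded genus and no elementary descent applies. The route I would take is Mazur's Eisenstein-ideal analysis of the Jacobian $J_0(p)$: localizing the Hecke action at the Eisenstein ideal and passing to the Eisenstein quotient, whose Mordell-Weil rank over $\mathbb{Q}$ is zero. The decisive step is to prove that the map from $X_0(p)$ into this quotient is a formal immersion at the cusp; combined with reduction modulo a small auxiliary prime, this forces any rational point of $X_0(p)$ to be cuspidal unless the associated curve has complex multiplication. Since a rational point of order $p$ yields a rational cyclic subgroup of order $p$, hence a non-cuspidal point of $X_0(p)$, the finitely many exceptional complex-multiplication primes are then dispatched by checking that those curves carry no rational point of order $p$. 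Assembling the genus-zero families, the low-genus Mordell-Weil computations, and the Eisenstein-ideal argument produces the complete list.
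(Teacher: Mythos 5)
The paper offers no proof of this statement: it is Mazur's theorem, quoted with a citation to \cite{BM} and used as a black box throughout, so there is nothing internal to compare your proposal against. What you have written is an outline of Mazur's own proof from the literature, and as an outline it is broadly faithful: the occurrence half via the genus-zero Tate/Kubert parametrizations, the translation of the classification into determining $X_1(N)(\mathbb{Q})$, the reduction to prime orders plus finitely many composite levels, the rank-zero elliptic curves $X_1(11)$, $X_1(14)$, $X_1(15)$, and the Eisenstein-quotient and formal-immersion argument for large primes do form the actual skeleton of the proof. You should be clear, however, that this is a plan rather than a proof: the two decisive assertions --- that the Eisenstein quotient of $J_0(p)$ has finite Mordell--Weil group over $\mathbb{Q}$, and that $X_0(p)$ is formally immersed in it at the cusp --- constitute the core of Mazur's memoir and are merely named here, not established. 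Two smaller inaccuracies are worth flagging: the complex-multiplication caveat you attach to the endgame belongs to Mazur's later theorem on rational isogenies of prime degree rather than to the torsion theorem itself; and the composite levels outside the list (such as $N=16,18,20,21,24,25,27$) are not all dispatched by the prime reduction together with the genus-one cases --- several required separate descent arguments due to Kubert, Ligozat and others that predate Mazur. Within the context of this paper, the citation to \cite{BM} is the appropriate ``proof,'' and your sketch should be read as an annotated pointer to that reference rather than as a self-contained argument.
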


One of the aim of this work is to give the general terms of the elliptic
divisibility sequences with this property by using Tate normal form with one
parameter of an elliptic curve according to this parameter.

The \textit{Tate normal form} of an elliptic curve $E$ with point $P=(0,0)$
is defined by 
\begin{equation*}
E:y^{2}+(1-c)xy-by=x^{3}-bx^{2}.
\end{equation*}

If an elliptic curve in normal form has a point of order $N>3$, then
admissible change of variables transforms the curve to the Tate normal form,
in this case the point $P=(0,0)$ is a torsion point of maximal order.
Especially, if we want a classification with respect to the order of the
torsion points, the use of Tate normal form of elliptic curves is
unavoidable.

In \cite{KU}, Kubert listed that there is one parameter family of elliptic
curves $E$ defined over $\mathbb{Q}$ with a torsion point of order $N$ where 
$N=4,...,10,12$. Most cases are proved by Husem\"{o}ller, \cite{HS}. Also
some algorithms are given by using the existence of such a family, \cite{GO}%
. To decide when an elliptic curve defined over $\mathbb{Q}$ has a point of
given order $N$, we need a result on parametrization of torsion structures:

\begin{theorem}
\cite{GO} Every elliptic curve with a point $P$ of order $N=4,...,10,12$ can
be written in the following Tate normal form%
\begin{equation*}
E:y^{2}+(1-c)xy-by=x^{3}-bx^{2},
\end{equation*}%
with the following relations:\newline
1. If $N=4,$ $b=\alpha ,c=0.$\newline
2. If $N=5,$ $b=\alpha ,c=\alpha .$\newline
3. If $N=6,$ $b=\alpha +\alpha ^{2},~c=\alpha .$\newline
4. If $N=7,$ $b=\alpha ^{3}-\alpha ^{2},~c=\alpha ^{2}-\alpha .$\newline
5. If $N=8,$ $b=(2\alpha -1)(\alpha -1),~c=b/\alpha .$\newline
6. If $N=9,c=\alpha ^{2}(\alpha -1),~b=c(\alpha (\alpha -1)+1).$\newline
7. If $N=10,$ $c=(2\alpha ^{3}-3\alpha ^{2}+\alpha )/(\alpha -(\alpha
-1)^{2}),~b=c\alpha ^{2}/(\alpha -(\alpha -1)^{2}).$\newline
8. If $N=12,$ $c=(3\alpha ^{2}-3\alpha +1)(\alpha -2\alpha ^{2})/(\alpha
-1)^{3},~b=c(-2\alpha ^{2}+2\alpha -1)/(\alpha -1).$
\end{theorem}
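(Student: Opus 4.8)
The plan is to start from the Tate normal form, in which the chosen point of order $N$ is placed at $P=(0,0)$, and to translate the condition ``$P$ has exact order $N$'' into an explicit polynomial relation between the two coefficients $b$ and $c$. As noted in the text preceding the statement, an admissible change of variables brings any elliptic curve over $\mathbb{Q}$ carrying a rational point of order $N>3$ into the form $E:y^{2}+(1-c)xy-by=x^{3}-bx^{2}$ with that point at the origin, so it suffices to determine for which pairs $(b,c)$ the point $P=(0,0)$ has order exactly $N$, and then to parametrize the resulting locus. On this curve one has $a_{1}=1-c$ and $a_{3}=-b$, so the negation map reads $-(x,y)=(x,-y-(1-c)x+b)$; in particular $-P=(0,b)$.

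First I would compute the multiples $2P,3P,\ldots,6P$ of $P=(0,0)$ by repeated application of the group law (equivalently, by evaluating the division polynomials $\psi_{n}$ at $P$), expressing the affine coordinates of each $nP$ as explicit rational functions of $b$ and $c$. Because the largest order treated is $N=12$, I never need a multiple beyond $6P$. The order condition then splits according to the parity of $N$: when $N=2m$ is even, $P$ has order $N$ precisely when $mP$ is a point of order $2$, i.e.\ when its coordinates satisfy the two-torsion relation $2y+(1-c)x-b=0$; when $N=2m+1$ is odd, $P$ has order $N$ precisely when $(m+1)P=-(mP)$, and since these points are distinct this is equivalent to the equality of $x$-coordinates $x\bigl((m+1)P\bigr)=x(mP)$. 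Carrying this out for $m=2,\ldots,6$ produces, in each case, a single polynomial equation $F_{N}(b,c)=0$.

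The remaining step is to recognize each curve $F_{N}(b,c)=0$ in the $(b,c)$-plane as a rational (genus-zero) curve and to exhibit an explicit rational parametrization by one parameter $\alpha$, matching the eight formulas in the statement. For $N=4$ the relation degenerates to $c=0$, leaving $b=\alpha$ free, and for $N=5$ it forces $b=c=\alpha$; for $N=6,\ldots,10,12$ the equation is genuinely nonlinear but still unicursal, and choosing a convenient uniformizing coordinate on it yields the stated rational expressions for $b$ and $c$ in terms of $\alpha$. Finally I would check the converse direction, namely that for the parametrized values the point $P$ has order exactly $N$ and not a proper divisor of $N$, by verifying $dP\neq\mathbf{O}$ for each proper divisor $d\mid N$; this amounts to confirming that the parametrization avoids the lower-order loci.

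I expect the main obstacle to be twofold: organizing the iterated group-law computations for the higher multiples $5P$ and $6P$ so that the coefficients stay manageable, and then producing the explicit genus-zero parametrization for $N=9,10,12$, where the defining relation $F_{N}(b,c)=0$ is most complicated and the right choice of uniformizing parameter $\alpha$ is not obvious a priori. Confirming that the exact order is $N$, ruling out the proper divisors, is routine once the parametrization is in hand but must not be overlooked.
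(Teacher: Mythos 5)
This statement is quoted from \cite{GO} and the paper supplies no proof of its own, so there is no internal argument to compare against; your outline reproduces the standard derivation used in that reference (and in Kubert and Husem\"oller): place the order-$N$ point at $(0,0)$ in the form $E(b,c)$, compute $2P=(b,bc)$, $3P=(c,b-c)$, etc., impose $mP=-mP$ or $(m+1)P=-mP$ to get the locus $F_{N}(b,c)=0$, and rationally parametrize it (e.g.\ by $\alpha=b/c$ for $N=7,8$). This is essentially the same approach as the source the paper relies on, and it is correct in outline.
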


Theorem 2.2 states that, if any elliptic curve has a point of finite order
then this curve is birationally equivalent to one of the Tate normal forms
given in the theorem above. Therefore, in this work, we are only interested
in the elliptic curves in Tate normal forms with one integer parameter $%
\alpha $ and general terms of the elliptic divisibility sequences are given
depending on this integer parameter $\alpha $.

Ward proved, in \cite{MW2}, that birationally equivalent elliptic curves are
associated to equivalent elliptic divisibility sequences, so it is not a
restriction to give the general terms by using Tate normal forms, that is,
we will be giving general terms of all elliptic divisibility sequences with
zero terms under this equivalence.

The relation between an elliptic curve and an elliptic divisibility sequence
is given by Morgan Ward, see for details and formulas, \cite{MW2}. Ward
proved that elliptic divisibility sequences arise as values of the division
polynomials of an elliptic curve, i.e., if $P=(x,y)$ is a rational point on
an elliptic curve $E$ over $%
\mathbb{Q}
$ then the elliptic divisibility sequence $(h_{n})$ is defined by $%
h_{n}=\psi _{n}(x,y)$ for $n\in {\mathbb{N}}$ where $\psi _{n}$ is the $n$%
-th division polynomial of $E$. Therefore, if $E$ is an elliptic curve over $%
\mathbb{Q}
$ then the initial values of the elliptic divisibility sequence are given by
the coefficients of an elliptic curve. Conversely, if $(h_{n})$ is an
elliptic divisibility sequence in which neither $h_{2}$ nor $h_{3}$ is zero
then there exists an elliptic curve $E$ and the coefficients of the elliptic
curve are given by the initial values of the sequence. In this paper, under
this fact, we first give initial values and the general terms of an elliptic
divisibility sequence associated to an elliptic curve in Tate normal form
with a torsion point $P$. We will now give a short account of material about
elliptic divisibility sequences, for more detailed information about these
sequences in general, see \cite{ME,GE1,RS,CS,MW1,MW2}.

Two elliptic divisibility sequences $(h_{n})$ and $(h_{n}^{\prime })$ are
said to be equivalent if there exists a rational $\omega $ such that 
\begin{equation}
h_{n}^{\prime }=\omega ^{n^{2}-1}h_{n}  \label{d1}
\end{equation}%
for all $n\in \mathbb{N}$.

Since we are interested in sequences with zero terms, i.e., the sequences in
certain ranks, we have to know the concept of the rank of an EDS:

An integer $m$ is said to be a \textit{divisor }of the sequence $(h_{n})$ if
it divides some term $h_{k}$ with $k>0$. Let $m$ be a divisor of $(h_{n})$.
If $\rho $ is an integer such that $m|h_{\rho }$ and there is no integer $j$
such that $j$ is a divisor of $\rho $ with $m|h_{j}$, then $\rho $ is said
to be the \textit{rank of apparition }of $m$ in $(h_{n})$. Ward established
that the multiples of\ $\rho $ are regularly spaced in $(h_{n})$ in the
following theorem.

\begin{theorem}
\cite{MW2}$~$Let $p$ be a prime divisor of an elliptic divisibility sequence 
$(h_{n})$, and let $\rho $ be its smallest rank of apparition. Let $h_{\rho
+1}\not\equiv 0~(p)$. Then 
\begin{equation*}
h_{n}\equiv 0~(p)\text{ if and only if\ }n\equiv 0~(\rho ).
\end{equation*}%
\ \ \ \ \ \ \ 
\end{theorem}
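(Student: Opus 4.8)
The plan is to separate the two implications and put essentially all the work into the forward direction. The \textbf{``if''} direction is immediate from the divisibility property: if $n\equiv 0\ (\rho)$ then $\rho\mid n$, hence $h_{\rho}\mid h_{n}$, and since $p\mid h_{\rho}$ we conclude $p\mid h_{n}$. For the converse I would argue the contrapositive, namely that $\rho\nmid n$ forces $h_{n}\not\equiv 0\pmod{p}$. First I would record two preliminary facts: since $h_{1}=1$ is never divisible by $p$ we have $\rho\geq 2$, and by minimality of the rank of apparition $h_{j}\not\equiv 0\pmod{p}$ for every $j$ with $1\le j<\rho$.

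The heart of the argument is to feed two well-chosen specializations of \eqref{1.1} into the congruence $h_{\rho}\equiv 0\pmod{p}$. Setting $m=\rho$, $n=r$ with $0<r<\rho$ and reducing modulo $p$ kills the term carrying $h_{\rho}^{2}$ and leaves
\[
h_{\rho+r}\,h_{\rho-r}\equiv h_{\rho+1}h_{\rho-1}h_{r}^{2}\pmod{p}.
\]
Here $h_{\rho+1}\not\equiv 0$ by hypothesis, while $h_{\rho-1}$, $h_{\rho-r}$ and $h_{r}$ are all nonzero modulo $p$ by minimality, since their indices lie strictly between $0$ and $\rho$; hence $h_{\rho+r}\not\equiv 0\pmod{p}$, i.e. the entire block $\rho<n<2\rho$ is free of zeros modulo $p$. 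Next, setting $m=k$, $n=\rho$ with $k\geq\rho$ gives the block relation
\[
h_{k+\rho}\,h_{k-\rho}\equiv -\,h_{\rho+1}h_{\rho-1}\,h_{k}^{2}\equiv c\,h_{k}^{2}\pmod{p},
\]
where $c\equiv -h_{\rho+1}h_{\rho-1}\not\equiv 0\pmod{p}$ for the same reason.

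I would then close by induction over blocks of length $\rho$. Fix $r$ with $0<r<\rho$ and write $b_{j}=h_{r+j\rho}\bmod p$; taking $k=r+j\rho$ (so that all indices involved are positive and $k\geq\rho$) turns the block relation into $b_{j+1}b_{j-1}\equiv c\,b_{j}^{2}$. The two base cases $b_{0}=h_{r}\not\equiv 0$ and $b_{1}=h_{\rho+r}\not\equiv 0$ are exactly the facts already established, and since $c$ is a unit modulo $p$ the relation $b_{j+1}\equiv c\,b_{j}^{2}/b_{j-1}$ propagates nonvanishing: if $b_{j-1},b_{j}\not\equiv 0$ then $b_{j+1}\not\equiv 0$. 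Thus $h_{r+j\rho}\not\equiv 0\pmod{p}$ for all $j\geq 0$, and as every $n$ with $\rho\nmid n$ is of this form the contrapositive follows. The part requiring the most care is the middle step: one must recognize that the hypothesis $h_{\rho+1}\not\equiv 0\pmod{p}$ is precisely what keeps the multiplier $c$ invertible, for without it the block recursion could collapse a later term to zero and break the regular spacing; the remaining effort is only bookkeeping on index ranges so that each application of \eqref{1.1} stays within $m\geq n\geq 1$.
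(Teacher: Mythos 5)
The paper does not prove this statement; it is quoted from Ward's memoir [MW2] and used as a black box. Your reconstruction is correct and is essentially Ward's own argument for the ``law of apparition'': the divisibility property gives the easy direction, and the two specializations $(m,n)=(\rho,r)$ and $(m,n)=(k,\rho)$ of the recurrence $h_{m+n}h_{m-n}=h_{m+1}h_{m-1}h_{n}^{2}-h_{n+1}h_{n-1}h_{m}^{2}$, reduced modulo $p$ so that the terms carrying $h_{\rho}^{2}$ drop out, give exactly the block relation $b_{j+1}b_{j-1}\equiv c\,b_{j}^{2}$ with $c=-h_{\rho+1}h_{\rho-1}$ a unit, which propagates nonvanishing along each residue class $r\pmod{\rho}$. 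Your index bookkeeping ($m\geq n\geq 1$ in each application, $\rho\geq 2$ from $h_{1}=1$, and the base cases $b_{0},b_{1}$) is in order, and you correctly isolate the role of the hypothesis $h_{\rho+1}\not\equiv 0\ (p)$ as keeping $c$ invertible. No gaps.
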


The following theorem shows us that the initial values of the EDS given by
the coefficients of the elliptic curve.

\begin{theorem}
\cite{RS}$~$Let $(h_{n})$ be an elliptic divisibility sequence. Then the
elliptic curves $E:y^{2}+a_{1}xy+a_{3}y=x^{3}+a_{2}x^{2}+a_{4}x$ where $%
a_{1},a_{2},a_{3},a_{4}\in \mathbb{Q},$ associated to $(h_{n})$ are
precisely those with:%
\begin{eqnarray}
h_{2} &=&a_{3},  \label{5} \\
h_{3} &=&a_{2}a_{3}^{2}-a_{4}^{2}-a_{1}a_{3}a_{4}  \label{6} \\
h_{4} &=&2a_{3}a_{4}h_{3}+a_{1}a_{3}^{2}h_{3}-a_{3}^{5}.  \label{7}
\end{eqnarray}
\end{theorem}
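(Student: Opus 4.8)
The plan is to exploit Ward's identification of an elliptic divisibility sequence with the values of the division polynomials of $E$, already recalled in the text: the sequence attached to $E$ and a rational point $P=(x,y)$ is $h_{n}=\psi_{n}(x,y)$. Because the curve here has no constant term ($a_{6}=0$), it passes through the origin, so the natural generating point is $P=(0,0)$; with the standard normalization $\psi_{0}=0,\psi_{1}=1$ the initial conditions $h_{0}=0,h_{1}=1$ hold automatically, and it remains only to evaluate $\psi_{2},\psi_{3},\psi_{4}$ at $(0,0)$.

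First I would write out the low-order division polynomials in terms of the usual quantities
\begin{equation*}
b_{2}=a_{1}^{2}+4a_{2},\quad b_{4}=2a_{4}+a_{1}a_{3},\quad b_{6}=a_{3}^{2}+4a_{6},\quad b_{8}=a_{1}^{2}a_{6}+4a_{2}a_{6}-a_{1}a_{3}a_{4}+a_{2}a_{3}^{2}-a_{4}^{2},
\end{equation*}
namely
\begin{equation*}
\psi_{2}=2y+a_{1}x+a_{3},\qquad \psi_{3}=3x^{4}+b_{2}x^{3}+3b_{4}x^{2}+3b_{6}x+b_{8},
\end{equation*}
and $\psi_{4}=\psi_{2}\,\bigl(2x^{6}+b_{2}x^{5}+5b_{4}x^{4}+10b_{6}x^{3}+10b_{8}x^{2}+(b_{2}b_{8}-b_{4}b_{6})x+b_{4}b_{8}-b_{6}^{2}\bigr)$. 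Setting $a_{6}=0$ collapses these to $b_{6}=a_{3}^{2}$ and $b_{8}=a_{2}a_{3}^{2}-a_{4}^{2}-a_{1}a_{3}a_{4}$.

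Next I would substitute $x=y=0$. Every term of $\psi_{3}$ except the constant $b_{8}$ carries a positive power of $x$ and so vanishes, while in $\psi_{4}$ only the constant term $b_{4}b_{8}-b_{6}^{2}$ of the second factor survives. This yields at once
\begin{align*}
h_{2}&=\psi_{2}(0,0)=a_{3},\\
h_{3}&=\psi_{3}(0,0)=b_{8}=a_{2}a_{3}^{2}-a_{4}^{2}-a_{1}a_{3}a_{4},\\
h_{4}&=\psi_{4}(0,0)=a_{3}\,(b_{4}b_{8}-b_{6}^{2})=a_{3}\bigl((2a_{4}+a_{1}a_{3})h_{3}-a_{3}^{4}\bigr),
\end{align*}
and expanding the last line gives $h_{4}=2a_{3}a_{4}h_{3}+a_{1}a_{3}^{2}h_{3}-a_{3}^{5}$, which are exactly the asserted relations (\ref{5})--(\ref{7}).

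Finally, for the word \emph{precisely} I would argue the converse: a proper solution of (\ref{1.1}) is completely determined by the five values $h_{0},h_{1},h_{2},h_{3},h_{4}$ through the recurrence, so a curve of the given shape is associated to $(h_{n})$ exactly when its coefficients reproduce these initial values, i.e.\ exactly when (\ref{5})--(\ref{7}) hold. Since the first relation pins down $a_{3}$ while the remaining two impose only two constraints on $a_{1},a_{2},a_{4}$, a whole family of curves is admissible, consistent with the earlier fact that birationally equivalent curves yield equivalent sequences. The only genuinely delicate points are the claim that $(0,0)$ is the correct generating point and that the proper solution is in fact an EDS; the former follows from the cited correspondence in \cite{MW2,RS}, while the latter is visible directly, since the factored form exhibits $h_{4}$ as $a_{3}=h_{2}$ times an integer, so the divisibility condition $h_{2}\mid h_{4}$ is automatic. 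The argument therefore reduces to the bookkeeping above.
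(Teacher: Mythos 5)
The paper offers no proof of this statement to compare yours against: Theorem 2.4 is quoted directly from Shipsey's thesis \cite{RS} and is used in the paper as a black box. Judged on its own, your derivation is correct and is essentially the standard argument behind the cited result: you invoke Ward's identification $h_{n}=\psi_{n}(x,y)$, observe that $a_{6}=0$ forces $(0,0)\in E$ so the generating point is $P=(0,0)$, and evaluate the low-order division polynomials there. Your formulas for $\psi_{2},\psi_{3},\psi_{4}$ and for $b_{2},b_{4},b_{6},b_{8}$ are the standard ones, the specializations $b_{6}=a_{3}^{2}$ and $b_{8}=a_{2}a_{3}^{2}-a_{4}^{2}-a_{1}a_{3}a_{4}$ when $a_{6}=0$ are right, and extracting the constant terms of $\psi_{3}$ and of the second factor of $\psi_{4}$ gives exactly the three stated relations, including the identity $h_{4}=a_{3}\bigl(b_{4}h_{3}-b_{6}^{2}\bigr)=2a_{3}a_{4}h_{3}+a_{1}a_{3}^{2}h_{3}-a_{3}^{5}$. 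Two small remarks. First, your closing claim that $h_{2}\mid h_{4}$ is ``automatic'' from the factorization $h_{4}=a_{3}(b_{4}b_{8}-b_{6}^{2})$ tacitly assumes $b_{4}b_{8}-b_{6}^{2}\in\mathbb{Z}$, which holds when the $a_{i}$ are integers (as for the Tate normal forms used later) but is not forced by $a_{i}\in\mathbb{Q}$ together with integrality of $h_{2},h_{3},h_{4}$ alone; since the theorem only asserts the polynomial identities, this does not damage the proof, but the divisibility should not be presented as a formal consequence of them. Second, your handling of ``precisely'' --- that a proper solution is determined by its initial values, so a curve of this shape is associated to $(h_{n})$ exactly when its coefficients reproduce $h_{2},h_{3},h_{4}$ --- is the right reading of the statement and matches how the paper uses it in Theorem 3.1.
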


\section{The initial values and the general terms of the EDSs}

The problem of finding the general terms of the elliptic divisibility
sequences whose second (resp. third, fourth, fifth, sixth) term is zero are
given in \cite{BO1}. However, it was seen that the general terms of the
other sequences with zero terms can not be easily obtained.

In this paper we first give the general terms of all elliptic divisibility
sequences with zero terms by using Tate normal form of an elliptic curve
which has a torsion point $P=(0,0)$. According to the Mazur's theorem, we
know that if $(h_{n})$ is an elliptic divisibility sequence, in which $%
h_{N}=0$ for some minimal index $N$ then $N\in \{2,...,10,12\}$. The aim of
this section is to give general terms of these sequences. Naturally, it is
sufficient to give the general terms only for the sequences with one of the
terms $h_{2},...,h_{10},h_{12}$ are zero. But we begin by the case $N>3$,
since we use Tate normal form of an elliptic curve. The general terms of
improper elliptic divisibility sequences where $h_{2}$ or $h_{3}$ is equal
to zero will be discussed at the end of this section and general terms of
them will be given in Theorem 3.3. In the following theorem, by using the
Tate normal form of an elliptic curve, the initial values of the sequences
with zero terms are given for $N>3$.

\begin{theorem}
Let $(h_{n})~$be an elliptic divisibility sequence in which $h_{N}=0$ for
some minimal index $N\in \left\{ 4,...,10,12\right\} $. Then the initial
values of $(h_{n})$ with integer parameter $\alpha $ given by the following:%
\newline
1. If $N=4,[1;~-\alpha ;$ $-\alpha ^{3};~\ 0].$\newline
2. If $N=5,[1;~-\alpha ;~-\alpha ^{3};~~\alpha ^{6}].$\newline
3. If $N=6,[1;~-\alpha (\alpha +1);~-\alpha ^{3}(\alpha +1)^{3};~\ \alpha
^{6}(\alpha +1)^{5}].$\newline
4. If $N=7,[1;~-\alpha ^{2}(\alpha -1);~-\alpha ^{6}(\alpha -1)^{3};~\
\alpha ^{11}(\alpha -1)^{6}].$\newline
5. If $N=8,[1;~-\alpha ^{3}\beta ;~-\alpha ^{8}\beta ^{3};~\ \alpha
^{14}\beta ^{6}]$, where $\xi =(\alpha -1)(2\alpha -1).$\newline
6. If $N=9,[1;~-\alpha ^{2}(\alpha -1)\gamma ;~-\alpha ^{6}(\alpha
-1)^{3}\gamma ^{3};~\ \alpha ^{12}(\alpha -1)^{6}\gamma ^{5}]$, where $%
\gamma =\alpha ^{2}-\alpha +1$.\newline
7. If $N=10,[1;~-\alpha ^{3}\delta \zeta ^{4};~-\alpha ^{9}\delta ^{3}\zeta
^{10};~\ \alpha ^{16}\delta ^{6}\zeta ^{19}]$, where $\zeta =\alpha -(\alpha
-1)^{2}$ and $\delta =(\alpha -1)(2\alpha -1).$\newline
8. If $N=12,[1;~-(\alpha -1)^{8}\lambda \theta ;~-(\alpha -1)^{20}\lambda
^{3}\theta ^{3};~~(\alpha -1)^{37}\lambda ^{6}\theta ^{5}]$, where $\lambda
=(3\alpha ^{2}-3\alpha +1)(\alpha -2\alpha ^{2})$ and $\theta =2\alpha
-2\alpha ^{2}-1$.

\begin{proof}
1. We first consider an elliptic curve $E$ with a point $P$ of order $N=4$.
Then by Theorem 2.2, the Tate normal form of $E$ is%
\begin{equation}
E:y^{2}+xy-\alpha y=x^{3}-\alpha x^{2}.  \label{a1}
\end{equation}%
By Theorem 2.4, $E$ is associated to the elliptic sequence $(h_{n})$ and the
initial values of the sequence are 
\begin{equation*}
h_{1}=1,~~h_{2}=-\alpha ,~~h_{3}=-\alpha ^{3},~~h_{4}=0.
\end{equation*}%
It is known that, if $P$ is an integer point, and the coefficients $a_{i}$
of the elliptic curve are integers, then the values $h_{n}$ are integers,
and have the divisibility property, that is, $(h_{n})$ is an EDS. Therefore
coefficients $\alpha $ in (\ref{a1}) must be an integer, since we want to
work with elliptic divisibility sequences.\newline
2. Similarly for $N=5$, we have%
\begin{equation*}
E:y^{2}+(1-\alpha )xy-\alpha y=x^{3}-\alpha x^{2}\text{,}
\end{equation*}%
and so, the initial values of the sequence are%
\begin{equation*}
h_{1}=1,~~h_{2}=-\alpha ,~~h_{3}=-\alpha ^{3},~~h_{4}=\alpha ^{6}.
\end{equation*}%
3. For $N=6$, we have%
\begin{equation*}
E:y^{2}+(1-\alpha )xy-\alpha (\alpha +1)y=x^{3}-\alpha (\alpha +1)x^{2}\text{%
,}
\end{equation*}%
and so, the initial values of the sequence are 
\begin{equation*}
h_{1}=1,~~h_{2}=-\alpha (\alpha +1),~~h_{3}=-\alpha ^{3}(\alpha
+1)^{3},~~h_{4}=\alpha ^{6}(\alpha +1)^{5}.
\end{equation*}%
4. For $N=7$, we have%
\begin{equation*}
E:y^{2}+(1-\alpha ^{2}+\alpha )xy-(\alpha ^{3}-\alpha ^{2})y=x^{3}-(\alpha
^{3}-\alpha ^{2})x^{2}\text{,}
\end{equation*}%
and so, the initial values of the sequence are 
\begin{equation*}
h_{1}=1,~~h_{2}=-\alpha ^{2}(\alpha -1),~~h_{3}=-\alpha ^{6}(\alpha
-1)^{3},~~h_{4}=\alpha ^{11}(\alpha -1)^{6}.
\end{equation*}%
5. Now let $E$ be an elliptic curve in normal form with a point $P$ of order 
$N=8$. Then by Theorem 2.2, the Tate normal form of $E$ is%
\begin{equation*}
E:y^{2}+(1-(2\alpha -1)(\alpha -1)/\alpha )xy-(2\alpha -1)(\alpha
-1)y=x^{3}-(2\alpha -1)(\alpha -1)x^{2}.
\end{equation*}%
Since we work with elliptic divisibility sequences, coefficients of the
elliptic curve must be integer, so we transform the elliptic curve $E$ to a
birationally equivalent curve $\widetilde{E}$ under admissible change of
variables%
\begin{equation*}
\widetilde{E}:y^{2}+(\alpha -\xi )xy-\alpha ^{3}\xi y=x^{3}-\alpha ^{2}\xi
x^{2}
\end{equation*}%
where $\xi =(2\alpha -1)(\alpha -1)$. By Theorem 2.4, $\widetilde{E}$ is
associated to the elliptic divisibility sequence $(h_{n})$ and the initial
values of the sequence are 
\begin{equation*}
h_{1}=1,h_{2}=-\alpha ^{3}(\alpha -1)(2\alpha -1),h_{3}=-\alpha ^{8}(\alpha
-1)^{3}(2\alpha -1)^{3},h_{4}=\alpha ^{14}(\alpha -1)^{6}(2\alpha -1)^{6}.
\end{equation*}%
6. For $N=9$ we have%
\begin{equation*}
E:y^{2}+(1-c)xy-by=x^{3}-bx^{2}
\end{equation*}%
where $c=\alpha ^{2}(\alpha -1)$, $b=c(\alpha (\alpha -1)+1)$, and so, the
initial values of this sequence are 
\begin{equation*}
h_{1}=1,~~h_{2}=-\alpha ^{2}(\alpha -1)\gamma ,~~h_{3}=-\alpha ^{6}(\alpha
-1)^{3}\gamma ^{3},~~h_{4}=\alpha ^{12}(\alpha -1)^{6}\gamma ^{5}
\end{equation*}%
where $\gamma =\alpha ^{2}-\alpha +1$.\newline
7. Now let $E$ be an elliptic curve in normal form with a point $P$ of order 
$N=10$. By Theorem 2.2, the Tate normal form of $E$ is%
\begin{equation*}
E:y^{2}+(1-c)xy-by=x^{3}-bx^{2}
\end{equation*}%
where $c=\alpha (2\alpha ^{2}-3\alpha +1)/(\alpha -(\alpha -1)^{2})$, $%
b=c\alpha ^{2}/(\alpha -(\alpha -1)^{2})$ and $E$ birationally equivalent to
the curve $\widetilde{E}$ under admissible change of variables given by%
\begin{equation*}
\widetilde{E}:y^{2}+(\delta ^{2}-\delta \zeta )xy-\alpha ^{2}\zeta \delta
^{4}y=x^{3}-\zeta \alpha ^{2}x^{2}\text{,}
\end{equation*}%
where $\zeta =(2\alpha -1)(\alpha -1)$, $\delta =\alpha -(\alpha -1)^{2}$.
By Theorem 2.4, $\widetilde{E}$ is associated to the elliptic divisibility
sequence $(h_{n})$ and the initial values of this sequence are%
\begin{equation*}
h_{1}=1,~h_{2}=-\alpha ^{3}\zeta \delta ^{4},~h_{3}=-\alpha ^{9}\zeta
^{3}\delta ^{10},~h_{4}=\alpha ^{16}\zeta ^{6}\delta ^{19}.
\end{equation*}%
8. Now let $E$ be an elliptic curve in normal form with a point $P$ of order 
$N=12$. By Theorem 2.2, the Tate normal form of $E$ is%
\begin{equation*}
E:y^{2}+(1-c)xy-by=x^{3}-bx^{2}\text{,}
\end{equation*}%
where $c=(3\alpha ^{2}-3\alpha +1)\alpha (1-2\alpha )/(\alpha -1)^{3}$, $%
b=c(2\alpha -2\alpha ^{2}-1)/(\alpha -1)$ and $E$ birationally equivalent to
the curve $\widetilde{E}$ under admissible change of variables given by%
\begin{equation*}
\widetilde{E}:y^{2}+(\alpha -1)((\alpha -1)^{3}-\lambda )xy-(\alpha
-1)^{8}\lambda \theta y=x^{3}-(\alpha -1)^{4}\lambda \theta x^{2}\text{,}
\end{equation*}%
where $\lambda =(3\alpha ^{2}-3\alpha +1)(\alpha -2\alpha ^{2})$, $\theta
=2\alpha -2\alpha ^{2}-1$. By Theorem 2.4, $\widetilde{E}$ is associated to
the elliptic divisibility sequence $(h_{n})$ and the initial values of this
sequence are 
\begin{equation*}
h_{1}=1,~~h_{2}=-(\alpha -1)^{8}\lambda \theta ,~~h_{3}=-(\alpha
-1)^{20}\lambda ^{3}\theta ^{3},~~h_{4}=(\alpha -1)^{37}\lambda ^{6}\theta
^{5}.
\end{equation*}
\end{proof}
\end{theorem}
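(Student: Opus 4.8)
The plan is to run the correspondence between elliptic divisibility sequences and elliptic curves in the direction EDS $\to$ curve, and then read off the first terms from the Tate parametrization. Suppose $(h_n)$ is an EDS with $h_N=0$ for a minimal index $N\in\{4,\dots,10,12\}$. Writing $h_n=\psi_n(P)$ for the division polynomials of the associated curve $E$ with rational point $P$, the vanishing $\psi_N(P)=0$ together with the minimality of $N$ says exactly that $P$ is a torsion point of exact order $N$. By Theorem 2.2 we may therefore normalize $E$ to the Tate form $E:y^{2}+(1-c)xy-by=x^{3}-bx^{2}$ with $P=(0,0)$ and with $b,c$ the explicit functions of $\alpha$ tabulated there. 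Since (by Ward's result, \cite{MW2}) birationally equivalent curves give equivalent sequences, computing the terms for this model loses nothing, and the whole problem reduces to evaluating $h_2,h_3,h_4$ for each of the eight Tate forms.

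I would first dispose of all cases with one uniform computation. Reading $a_1=1-c$, $a_2=a_3=-b$, $a_4=0$ off the Tate form and substituting into the defining relations \eqref{5}, \eqref{6}, \eqref{7} of Theorem 2.4, the $a_4$-terms drop out and \eqref{7} collapses after the cancellation $-(1-c)+1=c$, leaving
\begin{equation*}
h_{2}=-b,\qquad h_{3}=-b^{3},\qquad h_{4}=cb^{5}.
\end{equation*}
For $N=4,5,6,7,9$, where Theorem 2.2 gives $b$ and $c$ as genuine polynomials in $\alpha$, these three identities immediately produce the integer initial values asserted in the statement; integrality of $\alpha$ then guarantees that $(h_n)$ is a bona fide EDS (integer terms with the divisibility property), exactly as in the case $N=4$.

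The only real obstacle is the triple $N=8,10,12$, for which $b$ and $c$ carry nontrivial denominators and the displayed identities return rational, not integral, values of $h_2,h_3,h_4$. To fix this I would invoke the equivalence $h_n'=\omega^{n^{2}-1}h_n$: the admissible scaling $x\mapsto u^{2}x$, $y\mapsto u^{3}y$ sends $a_i\mapsto u^{-i}a_i$ and hence $h_n\mapsto u^{-(n^{2}-1)}h_n$, so setting $\omega=u^{-1}$ equal to the common denominator-clearing factor produces an integral model $\widetilde E$. The correct choices are $\omega=\alpha$ for $N=8$, $\omega=(\alpha-(\alpha-1)^{2})^{2}$ for $N=10$, and $\omega=(\alpha-1)^{4}$ for $N=12$; in each case one checks that multiplying $a_1,a_2,a_3$ by $\omega,\omega^{2},\omega^{3}$ clears every denominator simultaneously. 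Applying Theorem 2.4 to the integral $\widetilde E$ then yields precisely the stated values; for $N=12$, for instance, $h_2=-(\alpha-1)^{8}\lambda\theta$ and $h_3=-(\alpha-1)^{20}\lambda^{3}\theta^{3}$, while $h_4=a_1a_3^{2}h_3-a_3^{5}$ simplifies after the cancellation $-[(\alpha-1)^{4}-(\alpha-1)\lambda]+(\alpha-1)^{4}=(\alpha-1)\lambda$ to $h_4=(\alpha-1)^{37}\lambda^{6}\theta^{5}$.

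The delicate part throughout is precisely this bookkeeping for the last three cases: one must confirm that a single scaling factor $\omega$ clears all denominators at once and that the exponents of $\alpha$ and of the auxiliary polynomials $\zeta,\delta,\lambda,\theta$ emerge exactly as claimed. I expect no conceptual difficulty beyond this — once the correct $\omega$ is pinned down for $N=8,10,12$, the evaluation of $h_2,h_3,h_4$ is a direct, if lengthy, substitution into \eqref{5}--\eqref{7}, and the polynomial cases fall out instantly from $h_2=-b$, $h_3=-b^{3}$, $h_4=cb^{5}$.
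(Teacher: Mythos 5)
Your proposal is correct and follows essentially the same route as the paper: read off the Tate normal form from Theorem 2.2, evaluate $h_2,h_3,h_4$ via Theorem 2.4, and for $N=8,10,12$ rescale by an admissible change of variables to clear denominators before applying Theorem 2.4 (the paper writes out the integral models $\widetilde{E}$ explicitly rather than invoking $h_n\mapsto\omega^{n^2-1}h_n$, but this is the same computation). Your uniform reduction $h_{2}=-b$, $h_{3}=-b^{3}$, $h_{4}=cb^{5}$ and the scaling factors $\omega=\alpha$, $(\alpha-(\alpha-1)^{2})^{2}$, $(\alpha-1)^{4}$ all check out against the stated values.
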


Thus we know the initial values of the sequences $(h_{n})$ with zero terms.
We now give the general terms of the sequences $(h_{n})$ with zero terms
depending on only one integer parameter $\alpha $ in the following theorem.

\begin{theorem}
Let $(h_{n})~$be an elliptic divisibility sequence with $N$-th term zero,
i.e., with rank $N\in \left\{ 4,...,10,12\right\} $. Let $\xi ,\gamma
,\delta ,\zeta ,\lambda ,\theta $ as in Theorem 3.1. Then the general term
of $(h_{n})$ given by the following formulas:\newline
1. If $N=4,$ 
\begin{equation}
h_{n}=\varepsilon \alpha ^{\{(3n^{2}-p)/8\}}  \label{8}
\end{equation}%
where $\varepsilon =\left\{ 
\begin{array}{ll}
{\small +1} & \text{if }{\small n}\text{ }{\small \equiv 1,5,6~(8)}\text{ \
\ }{\small \ } \\ 
{\small -1} & \text{if }{\small n}\text{ }{\small \equiv 2,3,7~(8)}\text{,}%
\end{array}%
\right. p=\left\{ 
\begin{array}{ll}
{\small 3} & \text{if }{\small n}\text{ }{\small \equiv 1,3~(4)}\text{ \ \ }%
{\small \ } \\ 
{\small 4} & \text{if }{\small n}\text{ }{\small \equiv 2~(4)}\text{.}%
\end{array}%
\right. $\newline
2. If $N=5,$%
\begin{equation}
h_{n}=\varepsilon \alpha ^{\{(2n^{2}-p)/5\}}  \label{81}
\end{equation}%
where $\varepsilon =\left\{ 
\begin{array}{ll}
{\small +1} & \text{if }{\small n}\text{ }{\small \equiv 1,4,7,8~(10)}\text{
\ \ }{\small \ } \\ 
{\small -1} & \text{if }{\small n}\text{ }{\small \equiv 2,3,6,9~(10)}\text{,%
}%
\end{array}%
\right. p=\left\{ 
\begin{array}{ll}
{\small 2} & \text{if }{\small n}\text{ }{\small \equiv 1,4~(5)}\text{ \ \ }%
{\small \ } \\ 
{\small 3} & \text{if }{\small n}\text{ }{\small \equiv 2,3~(5)}\text{.}%
\end{array}%
\right. $\newline
3. If $N=6,$%
\begin{equation}
h_{n}=\varepsilon \alpha ^{\{(5n^{2}-p)/12\}}(\alpha +1)^{\{(n^{2}-k)/3\}}
\label{n6}
\end{equation}%
where $\varepsilon =\left\{ 
\begin{array}{ll}
{\small +1} & \text{if }{\small n}\text{ }{\small \equiv 1,4,5,9,10~(12)}%
\text{ \ \ }{\small \ } \\ 
{\small -1} & \text{if }{\small n}\text{ }{\small \equiv 2,3,7,8,11~(12),}%
\end{array}%
\right. $and%
\begin{equation*}
~p=\left\{ 
\begin{array}{ll}
{\small 5} & \text{if }{\small n}\text{ }{\small \equiv 1,5~(6)} \\ 
{\small 8} & \text{if }{\small n}\text{ }{\small \equiv 2,4~(6)} \\ 
{\small 9} & \text{if }{\small n}\text{ }{\small \equiv 3~(6),}%
\end{array}%
\text{ }\right. k=\left\{ 
\begin{array}{ll}
{\small 1} & \text{if }{\small n}\text{ }{\small \equiv 1,2,4,5~(6)}\text{ \
\ }{\small \ } \\ 
{\small 0} & \text{if }{\small n}\text{ }{\small \equiv 3(6)}\text{.}%
\end{array}%
\right. \newline
\end{equation*}%
\newline
4. If $N=7,$%
\begin{equation}
h_{n}=\varepsilon \alpha ^{\{(5n^{2}-p)/7\}}(\alpha -1)^{\{(3n^{2}-q)/7\}}
\label{n7}
\end{equation}%
where $\varepsilon =\left\{ 
\begin{array}{ll}
{\small +1} & \text{if }{\small n}\text{ }{\small \equiv 1,4,5~(7)}\text{ \
\ }{\small \ } \\ 
{\small -1} & \text{if }{\small n}\text{ }{\small \equiv 2,3,6~(7)}\text{,}%
\end{array}%
\right. $and%
\begin{equation*}
p=\left\{ 
\begin{array}{ll}
{\small 5} & \text{if }{\small n}\text{ }{\small \equiv 1,6~(7)} \\ 
{\small 6} & \text{if }{\small n}\text{ }{\small \equiv 2,5~(7)} \\ 
{\small 3} & \text{if }{\small n}\text{ }{\small \equiv 3,4~(7),}%
\end{array}%
\right. q=\left\{ 
\begin{array}{ll}
{\small 3} & \text{if }{\small n}\text{ }{\small \equiv 1,6~(7)} \\ 
{\small 5} & \text{if }{\small n}\text{ }{\small \equiv 2,5~(7)} \\ 
{\small 6} & \text{if }{\small n}\text{ }{\small \equiv 3,4~(7).}%
\end{array}%
\right.
\end{equation*}%
\newline
5. If $N=8,$%
\begin{equation}
h_{n}=\varepsilon \alpha ^{\{(15n^{2}-p)/16\}}(\alpha
-1)^{\{(7n^{2}-q)/16\}}(2\alpha -1)^{\{(3n^{2}-k)/8\}}  \label{n8}
\end{equation}%
where $\varepsilon =\left\{ 
\begin{array}{ll}
{\small +1} & \text{if }{\small n}\text{ }{\small \equiv
1,4,5,9,10,13,14~(16)}\text{ \ \ }{\small \ } \\ 
{\small -1} & \text{if }{\small n}\text{ }{\small \equiv
2,3,6,7,11,12,15~(16)}\text{,}%
\end{array}%
\right. $and 
\begin{equation*}
{\small p=}\left\{ 
\begin{array}{ll}
{\small 15} & \text{if }{\small n}\text{ }{\small \equiv 1,7~(8)} \\ 
{\small 12} & \text{if }{\small n}\text{ }{\small \equiv 2,6~(8)} \\ 
{\small 7} & \text{if }{\small n}\text{ }{\small \equiv 3,5~(8)} \\ 
{\small 16} & \text{if }{\small n}\text{ }{\small \equiv 4~(8),}%
\end{array}%
\right. {\small q=}\left\{ 
\begin{array}{ll}
{\small 7} & \text{if }{\small n}\text{ }{\small \equiv 1,7~(8)} \\ 
{\small 12} & \text{if }{\small n}\text{ }{\small \equiv 2,6~(8)} \\ 
{\small 15} & \text{if }{\small n}\text{ }{\small \equiv 3,5~(8)} \\ 
{\small 16} & \text{if }{\small n}\text{ }{\small \equiv 4~(8),}%
\end{array}%
\right. {\small k=}\left\{ 
\begin{array}{ll}
{\small 3} & \text{if }{\small n}\text{ }{\small \equiv 1,3,5,7~(8)} \\ 
{\small 4} & \text{if }{\small n}\text{ }{\small \equiv 2,6~(8)} \\ 
{\small 0} & \text{if }{\small n}\text{ }{\small \equiv 4~(8).}%
\end{array}%
\right.
\end{equation*}%
\newline
6. If $N=9,$%
\begin{equation}
h_{n}=\varepsilon \alpha ^{\{(7n^{2}-p)/9\}}(\alpha
-1)^{\{(4n^{2}-q)/9\}}\gamma ^{\{(n^{2}-k)/3\}}  \label{n9}
\end{equation}%
where $\varepsilon =\left\{ 
\begin{array}{ll}
{\small +1} & \text{if }{\small n}\text{ }{\small \equiv
1,4,5,8,11,12,15,16~(18)}\text{ \ \ }{\small \ } \\ 
{\small -1} & \text{if }{\small n}\text{ }{\small \equiv
2,3,6,7,10,13,14,17~(18)}\text{,}%
\end{array}%
\right. $and 
\begin{equation*}
{\small p=}\left\{ 
\begin{array}{ll}
{\small 7} & \text{if }{\small n}\text{ }{\small \equiv 1,8~(9)} \\ 
{\small 10} & \text{if }{\small n}\text{ }{\small \equiv 2,7~(9)} \\ 
{\small 9} & \text{if }{\small n}\text{ }{\small \equiv 3,6~(9)} \\ 
{\small 4} & \text{if }{\small n}\text{ }{\small \equiv 4,5~(9),}%
\end{array}%
\right. {\small q=}\left\{ 
\begin{array}{ll}
{\small 4} & \text{if }{\small n}\text{ }{\small \equiv 1,8~(9)} \\ 
{\small 7} & \text{if }{\small n}\text{ }{\small \equiv 2,7~(9)} \\ 
{\small 9} & \text{if }{\small n}\text{ }{\small \equiv 3,6~(9)} \\ 
{\small 10} & \text{if }{\small n}\text{ }{\small \equiv 4,5~(9),}%
\end{array}%
\right. {\small k=}\left\{ 
\begin{array}{ll}
{\small 0} & \text{if }{\small n}\text{ }{\small \equiv 3,6~(9)} \\ 
{\small 1} & {\small otherwise.}%
\end{array}%
\right.
\end{equation*}%
\newline
7. If $N=10,$%
\begin{equation}
h_{n}=\varepsilon \alpha ^{\{(21n^{2}-p)/20\}}(\alpha
-1)^{\{(9n^{2}-q)/20\}}(2\alpha -1)^{\{(2n^{2}-k)/5\}}\delta
^{\{(5n^{2}-s)/4\}}  \label{n10}
\end{equation}%
where $\varepsilon =\left\{ 
\begin{array}{ll}
{\small +1} & \text{if }{\small n}\text{ }{\small \equiv
1,4,5,8,9,13,14,17,18~(20)}\text{ \ \ }{\small \ } \\ 
{\small -1} & \text{if }{\small n}\text{ }{\small \equiv
2,3,6,7,11,12,15,16,19~(20)}\text{,}%
\end{array}%
\right. $and%
\begin{eqnarray*}
{\small p} &{\small =}&\left\{ 
\begin{array}{ll}
{\small 21} & \text{if }{\small n}\text{ }{\small \equiv 1,9~(10)} \\ 
{\small 24} & \text{if }{\small n}\text{ }{\small \equiv 2,8~(10)} \\ 
{\small 9} & \text{if }{\small n}\text{ }{\small \equiv 3,7~(10)} \\ 
{\small 16} & \text{if }{\small n}\text{ }{\small \equiv 4,6~(10)} \\ 
{\small 25} & \text{if }{\small n}\text{ }{\small \equiv 5~(10),}%
\end{array}%
\right. {\small q=}\left\{ 
\begin{array}{ll}
{\small 9} & \text{if }{\small n}\text{ }{\small \equiv 1,9~(10)} \\ 
{\small 16} & \text{if }{\small n}\text{ }{\small \equiv 2,8~(10)} \\ 
{\small 21} & \text{if }{\small n}\text{ }{\small \equiv 3,7~(10)} \\ 
{\small 24} & \text{if }{\small n}\text{ }{\small \equiv 4,6~(10)} \\ 
{\small 25} & \text{if }{\small n}\text{ }{\small \equiv 5~(10),}%
\end{array}%
\right. \\
{\small k} &{\small =}&\left\{ 
\begin{array}{ll}
{\small 2} & \text{if }{\small n}\text{ }{\small \equiv 1,4,6,9~(10)} \\ 
{\small 3} & \text{if }{\small n}\text{ }{\small \equiv 2,3,7,8~(10)} \\ 
{\small 0} & \text{if }{\small n}\text{ }{\small \equiv 5~(10),}%
\end{array}%
\right. {\small s=}\left\{ 
\begin{array}{ll}
{\small 5} & \text{if }{\small n}\text{ }{\small \equiv 1,3,5,7,9~(10)} \\ 
{\small 4} & \text{if }{\small n}\text{ }{\small \equiv 2,4,6,8~(10).}%
\end{array}%
\right.
\end{eqnarray*}%
\newline
8. If $N=12,$%
\begin{equation}
h_{n}=\varepsilon \alpha ^{\{(n^{2}-p)/12\}}(\alpha
-1)^{\{(59n^{2}-q)/24\}}(2\alpha -1)^{\{(n^{2}-k)/24\}}\lambda
^{\{(3n^{2}-s)/8\}}\theta ^{\{(n^{2}-t)/3\}}  \label{n12}
\end{equation}%
where $\varepsilon =%
\begin{array}{ll}
{\small +1} & \text{if }{\small n}\text{ }{\small \equiv
1,5,9,13,14,16,17,18,20,21,22~(24)}\text{ \ \ }{\small \ } \\ 
{\small -1} & \text{if }{\small n}\text{ }{\small \equiv
2,3,4,6,7,8,10,11,15,19,23~(24)}\text{,}%
\end{array}%
$and%
\begin{equation*}
{\small p=}\left\{ 
\begin{array}{ll}
{\small 1} & \text{if }{\small n}\text{ }{\small \equiv 1,11~(12)} \\ 
{\small 4} & \text{if }{\small n}\text{ }{\small \equiv 2,10~(12)} \\ 
{\small 9} & \text{if }{\small n}\text{ }{\small \equiv 3,9~(12)} \\ 
{\small 16} & \text{if }{\small n}\text{ }{\small \equiv 4,8~(12)} \\ 
{\small 13} & \text{if }{\small n}\text{ }{\small \equiv 5,7~(12)} \\ 
{\small 12} & \text{if }{\small n}\text{ }{\small \equiv 6~(12),}%
\end{array}%
\right. {\small q=}\left\{ 
\begin{array}{ll}
{\small 59} & \text{if }{\small n}\text{ }{\small \equiv 1,11~(12)} \\ 
{\small 44} & \text{if }{\small n}\text{ }{\small \equiv 2,10~(12)} \\ 
{\small 51} & \text{if }{\small n}\text{ }{\small \equiv 3,9~(12)} \\ 
{\small 56} & \text{if }{\small n}\text{ }{\small \equiv 4,8~(12)} \\ 
{\small 35} & \text{if }{\small n}\text{ }{\small \equiv 5,7~(12)} \\ 
{\small 60} & \text{if }{\small n}\text{ }{\small \equiv 6~(12),}%
\end{array}%
\right. {\small k=}\left\{ 
\begin{array}{ll}
{\small 1} & \text{if }{\small n}\text{ }{\small \equiv 1,5,7,11~(12)} \\ 
{\small 4} & \text{if }{\small n}\text{ }{\small \equiv 2,10~(12)} \\ 
{\small 9} & \text{if }{\small n}\text{ }{\small \equiv 3,9~(12)} \\ 
{\small 16} & \text{if }{\small n}\text{ }{\small \equiv 4,8~(12)} \\ 
{\small 12} & \text{if }{\small n}\text{ }{\small \equiv ~6~(12),}%
\end{array}%
\right.
\end{equation*}%
\begin{equation*}
{\small s=}\left\{ 
\begin{array}{ll}
{\small 3} & \text{if }{\small n}\text{ }{\small \equiv 1,3,5,7,9,11~(12)}
\\ 
{\small 4} & \text{if }{\small n}\text{ }{\small \equiv 2,6,10~(12)} \\ 
{\small 0} & \text{if }{\small n}\text{ }{\small \equiv 4,8~(12),}%
\end{array}%
\right. {\small t=}\left\{ 
\begin{array}{ll}
{\small 1} & \text{if }{\small n}\text{ }{\small \equiv
1,2,4,5,7,8,10,11~(12)} \\ 
{\small 0} & \text{if }{\small n}\text{ }{\small \equiv 3,6,9~(12).}%
\end{array}%
\right.
\end{equation*}
\end{theorem}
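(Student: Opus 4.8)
The plan is to reduce the entire theorem, for each fixed rank $N\in\{4,\dots,10,12\}$, to a single step-$N$ multiplicative recurrence together with a finite integrality check. I would start from the initial values $h_1,h_2,h_3,h_4$ furnished by Theorem~3.1 and, using the duplication formulas (1.2)--(1.3) and $h_N=0$, compute the further seed terms $h_5,\dots,h_{N+1}$ explicitly as monomials in $\alpha$ (and in the auxiliary quantities $\xi,\gamma,\delta,\zeta,\lambda,\theta$). Because $N$ is the minimal zero index, the rank-of-apparition result (Theorem~2.3, applied to the vanishing locus) gives $h_n=0$ exactly when $N\mid n$; in particular $h_n\neq 0$ for $N\nmid n$, and the asserted formulas are claimed only off the multiples of $N$, consistent with the residues listed for $\varepsilon$.

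The engine of the proof is the identity obtained from (1.1) on setting $n=N$ and using $h_N=0$ (and extending to all indices by the oddness $h_{-n}=-h_n$):
\begin{equation*}
h_{m+N}\,h_{m-N}\;=\;-\,h_{N+1}h_{N-1}\,h_m^{2}\qquad(m\in\mathbb{N}).
\end{equation*}
Write $\kappa:=-h_{N+1}h_{N-1}$, a fixed monomial which I would compute once per $N$. Taking $\pi$-adic valuations in each relevant irreducible factor $\pi$ (i.e. $\alpha$, $\alpha\pm1$, $2\alpha-1$, $\gamma$, $\delta$, $\lambda$, $\theta$), this shows the exponent $E_\pi(n):=v_\pi(h_n)$ satisfies the constant second-difference equation $E_\pi(m+N)+E_\pi(m-N)-2E_\pi(m)=v_\pi(\kappa)$ on each residue class modulo $N$. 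A function with constant step-$N$ second difference is a quadratic polynomial in $n$ on each class, which is exactly the shape $E_\pi(n)=(a\,n^{2}-r(n))/M$ with $r$ periodic modulo $N$ appearing in the statement; this is the conceptual reason the exponents are quadratic.

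Concretely I would argue by strong induction on $m$. The base cases are the terms $h_1,\dots,h_{N+1}$ computed above, which I check agree with the proposed closed form $H_n$ (here the residue-by-residue values of $p,q,k,s,t$ and of $\varepsilon$ are verified, and the symmetry $h_{N+k}h_{N-k}=h_{N+1}h_{N-1}h_k^2$ extends these to a full initial block $h_1,\dots,h_{2N-1}$). For the inductive step it suffices to verify that $H_n$ itself satisfies the displayed recurrence. The periodic corrections $r(n)$ have period dividing $N$, so they cancel in the step-$N$ second difference, leaving $E_\pi(m+N)+E_\pi(m-N)-2E_\pi(m)=2aN^{2}/M$; hence one only needs the single numerical equality $2aN^{2}/M=v_\pi(\kappa)$ for each factor $\pi$. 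Moreover, since $\varepsilon$ is defined by residues modulo $2N$, the indices $m+N$ and $m-N$ differ by $2N$, so $\varepsilon_{m+N}=\varepsilon_{m-N}$ and the sign condition $\varepsilon_{m+N}\varepsilon_{m-N}=\varepsilon_{m-N}^{2}=+1$ holds automatically, consistent with $\kappa$ being a formal perfect square. Matching both the recurrence and the seed block, uniqueness of the solution of the step-$N$ recurrence (division by the nonzero $h_{m-N}$ is legitimate off multiples of $N$) forces $H_n=h_n$ for all $n$.

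The main obstacle is purely computational and is concentrated in the high ranks $N=10,12$: one must produce $h_5,\dots,h_{N+1}$ and the constant $\kappa=-h_{N+1}h_{N-1}$ in closed form when the curve coefficients already involve the unwieldy polynomials $\lambda,\theta,\delta,\zeta$, and then confirm the numerous residue cases defining $p,q,k,s,t,\varepsilon$ together with the integrality of every exponent $(a\,n^{2}-r(n))/M$. I would organize this by tabulating $v_\pi(\kappa)$ for each $N$ and checking $2aN^{2}/M=v_\pi(\kappa)$ factor-by-factor, using the oddness relation $h_{-n}=-h_n$ and the symmetry $h_{N+k}h_{N-k}=h_{N+1}h_{N-1}h_k^2$ to roughly halve the number of explicit seed computations.
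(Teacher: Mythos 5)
Your proposal is correct, and it takes a genuinely different specialization of the defining recurrence than the paper does. The paper's proof inducts on $n$ using the step-$2$ relation $h_{n+2}h_{n-2}=h_{n+1}h_{n-1}h_{2}^{2}-h_{3}h_{1}h_{n}^{2}$, starting from the four initial values of Theorem~3.1 and working residue class by residue class; the displayed cases are precisely those where $n\pm1$ is a multiple of $N$, so that one of the two terms on the right vanishes and the relation becomes a single monomial identity, and the remaining residues are dismissed with ``other cases can be proved in the same way.'' You instead set $n=N$ in (1.1), so that $h_N=0$ kills one term for \emph{every} $m$, yielding the uniformly multiplicative step-$N$ relation $h_{m+N}h_{m-N}=-h_{N+1}h_{N-1}h_m^2$. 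This buys you three things the paper's route does not make transparent: the verification that the closed form satisfies the recurrence becomes pure exponent arithmetic (no cancellation between two nonproportional monomials ever needs to be checked, which is the delicate point in the paper's ``remaining cases,'' and indeed for some residues the step-$2$ relation has $h_{n-2}=0$ and determines nothing); the periodic corrections $p,q,k,s,t$, all of period dividing $N$, cancel exactly in the step-$N$ second difference, so the inductive step collapses to one numerical identity $2aN^{2}/M=v_{\pi}(\kappa)$ per irreducible factor; and the sign bookkeeping trivializes since $\varepsilon$ has period dividing $2N$. The price is a longer seed block $h_1,\dots,h_{2N-1}$, which you correctly halve via $h_{N+k}h_{N-k}=h_{N+1}h_{N-1}h_k^{2}$ (itself (1.1) with $m=N$, $n=k$). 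Two small points to tighten: the nonvanishing $h_{m-N}\neq0$ for $N\nmid m$ is cleaner to propagate through the induction itself (the seed entries are visibly nonzero monomials for the excluded values of $\alpha$) than to extract from Theorem~2.3, which is stated for prime divisors; and ``$\kappa$ is a formal perfect square'' is not automatic but is part of the finite check, namely that $h_{N-1}$ and $h_{N+1}$ carry opposite signs $\varepsilon$.
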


\begin{proof}
1. It is clear that the result is true for $n=5$. Hence we assume that $n>5$%
. If $(h_{n})$ is an EDS, then we know that%
\begin{equation}
h_{n+2}h_{n-2}=h_{n+1}h_{n-1}h_{2~}^{2}-h_{3}h_{1}h_{n~}^{2}.  \label{91}
\end{equation}

We argue by induction on $n$. \ First suppose that $n\equiv 1~(4)$ and (\ref%
{8}) is true for $n+1$. Then we have 
\begin{equation*}
h_{n+2}=-\alpha ^{6m^{2}+9m+3}
\end{equation*}%
by (\ref{8}). On the other hand we see that%
\begin{eqnarray*}
h_{n-2} &=&-\alpha ^{6m^{2}-3m}, \\
h_{n} &=&\alpha ^{6m^{2}+3m}, \\
h_{n-1} &=&0.
\end{eqnarray*}%
Substituting these expressions into (\ref{91}) gives $h_{n+2}=-\alpha
_{2}^{6m^{2}+9m+3}.$ Thus we proved the equation (\ref{8}) is true for $n+2$
which completes the proof for $n\equiv 1~(4)$. Other cases can be proved in
the same way. \newline
2. It is clear that the result is true for $n=6$. Hence we assume that $n>6$%
. If $(h_{n})$ is an EDS, then we know that%
\begin{equation}
h_{n+2}h_{n-2}=h_{n+1}h_{n-1}h_{2~}^{2}-h_{3}h_{1}h_{n~}^{2}.  \label{82}
\end{equation}

We again argue by induction using (\ref{81}). First suppose that $n\equiv
1~(5)$ and (\ref{81}) is true for $n+1$. Then we have 
\begin{equation*}
h_{n+2}=\left\{ 
\begin{array}{ll}
-\alpha ^{10m^{2}+12m+3} & \text{if }m\equiv 2,4~(5) \\ 
\alpha ^{10m^{2}+12m+3} & \text{if }m\equiv 1,3~(5)%
\end{array}%
\right.
\end{equation*}%
by (\ref{81}). On the other hand we see that%
\begin{eqnarray*}
h_{n-2} &=&\left\{ 
\begin{array}{ll}
-\alpha ^{10m^{2}-4m} & \text{if }m\equiv 2,4~(5) \\ 
\alpha ^{10m^{2}-4m} & \text{if }m\equiv 1,3~(5)%
\end{array}%
\right. \\
h_{n} &=&\left\{ 
\begin{array}{ll}
\alpha ^{10m^{2}+4m} & \text{if }m\equiv 2,4~(5) \\ 
-\alpha ^{10m^{2}+4m} & \text{if }m\equiv 1,3~(5)%
\end{array}%
\right. \\
h_{n-1} &=&0.
\end{eqnarray*}%
Substituting these expressions into (\ref{82}) gives $h_{n+2}h_{n-2}=\alpha
^{3}h_{n}^{2},$ hence we have%
\begin{equation*}
h_{n+2}=\left\{ 
\begin{array}{ll}
-\alpha ^{10m^{2}+12m+3} & \text{if }m\equiv 2,4~(5) \\ 
\alpha ^{10m^{2}+12m+3} & \text{if }m\equiv 1,3~(5)\text{.}%
\end{array}%
\right.
\end{equation*}%
Thus we proved that the equation (\ref{81}) is true for $n+2.$ Other cases
can be proved in the same way.

The same proof works for the remaining parts of the theorem.
\end{proof}

For example, in the following table, it is seen that the values $p$, $q$, $k$%
, $s$ and $t$ of the general term of the elliptic divisibility sequences
with rank twelve:

\begin{table}[th]
\label{eqtable} \renewcommand\arraystretch{0.9} \noindent 
\begin{equation*}
\begin{array}{|c|c|c|c|c|c|}
\hline
& {p} & {q} & {k} & {s} & {t} \\ \hline\hline
n\equiv 1(12) & 1 & 59 & 1 & 3 & 1 \\ \hline
n\equiv 2(12) & 4 & 44 & 4 & 4 & 1 \\ \hline
n\equiv 3(12) & 9 & 51 & 9 & 3 & 0 \\ \hline
n\equiv 4(12) & 16 & 56 & 16 & 0 & 1 \\ \hline
n\equiv 5(12) & 13 & 35 & 1 & 3 & 1 \\ \hline
n\equiv 6(12) & \mathbf{12} & \mathbf{60} & \mathbf{-12} & \mathbf{4} & 
\mathbf{0} \\ \hline
n\equiv 7(12) & 13 & 35 & 1 & 3 & 1 \\ \hline
n\equiv 8(12) & 16 & 56 & 16 & 0 & 1 \\ \hline
n\equiv 9(12) & 9 & 51 & 9 & 3 & 0 \\ \hline
n\equiv 10(12) & 4 & 44 & 4 & 4 & 1 \\ \hline
n\equiv 11(12) & 1 & 59 & 1 & 3 & 1 \\ \hline
\end{array}%
\end{equation*}%
\par
\begin{center}
{\footnotesize The values $p,q,k,s$ and $t$ of the general term of the EDSs
with rank twelve}
\end{center}
\end{table}
According to this table, we have the following terms for $n\equiv 6$ $(12)$: 
\begin{eqnarray*}
h_{6} &=&\alpha ^{2}(\alpha -1)^{86}(2\alpha -1)^{2}\lambda ^{13}\delta
^{12}, \\
~h_{18} &=&-\alpha ^{26}(\alpha -1)^{794}(2\alpha -1)^{14}\lambda
^{121}\delta ^{108} \\
h_{30} &=&\alpha ^{74}(\alpha -1)^{2210}(2\alpha -1)^{38}\lambda
^{337}\delta ^{300} \\
h_{42} &=&-\alpha ^{146}(\alpha -1)^{4334}(2\alpha -1)^{74}\lambda
^{661}\delta ^{588}
\end{eqnarray*}%
In addition, if we take $\alpha =3$, the first eight terms of the EDS with
rank twelve are%
\begin{eqnarray*}
&&{\small 1;-948480};{\small %
-53329136320512000;-27346122891266847865307136000000;} \\
&&{\small 17500141386070121786711926566237801283584;-3319445579395304657} \\
&&{\small 0787963710047756557989077368249771884544000000000000000;-45938} \\
&&{\small 2422798666425039100328482290559063213427355998727370793872053} \\
&&{\small 34329698182758400000000000000000000;-1884349228191035614337748} \\
&&{\small 21043991142309132528994842811514985817859497226098776607161543} \\
&&{\small 9393817547813525913600000000000000000000000000;...}
\end{eqnarray*}

\begin{remark}
There are also elliptic curves with a torsion point which are not in the
Tate normal form as in Theorem 2.2. For example the point $P=(0,0)$ on the
elliptic curve%
\begin{equation*}
E:y^{2}+17xy-120y=x^{3}-60x^{2}
\end{equation*}%
is a torsion point of order eight. The initial values of elliptic
divisibility sequence $(h_{n})$ associated to the curve $E$ are%
\begin{equation*}
h_{1}=1,~h_{2}=-120,~h_{3}=-864000,~h_{4}=-186624000000
\end{equation*}%
and $h_{8}=0$, that is, the sequence has rank eight. $E$ is birationally
equivalent to the curve $\widetilde{E}$ under the transformation $\dbinom{x}{%
y}\rightarrow \dbinom{4x}{8y}$ given by%
\begin{equation*}
\widetilde{E}:y^{2}+\frac{17}{2}xy-15y=x^{3}-15x^{2}
\end{equation*}%
which is in Tate normal form. This curve gives us an elliptic sequence, so
we need to make another transformation to have an elliptic divisibility
sequence. Hence we have%
\begin{equation*}
E^{\prime }:y^{2}+34xy-960y=x^{3}-240x^{2}
\end{equation*}%
and the initial values of elliptic divisibility sequence $(h_{n}^{\prime })$
associated to the elliptic curve $E^{\prime }$ are 
\begin{equation*}
h_{1}^{\prime }=1,~h_{2}^{\prime }=-960,~h_{3}^{\prime
}=-221184000,~h_{4}^{\prime }=-6115295232000000.
\end{equation*}%
It can easily be seen that $(h_{n})$ and $(h_{n}^{\prime })$ are equivalent
by taking $\omega =-2$ in the equation (\ref{d1}). So, the general terms of
the elliptic divisibility sequences associated to the elliptic curves in
Tate normal form are the general terms of all elliptic divisibility
sequences with zero terms under the equivalence.
\end{remark}

\begin{remark}
There is no Tate normal form of an elliptic curve with the torsion point of
order two or three, but Kubert in \cite{KU}, listed the elliptic curves with
torsion point of order two or three are 
\begin{equation*}
E:y^{2}=x^{3}+ax^{2}+bx
\end{equation*}%
and%
\begin{equation*}
E:y^{2}+a_{1}xy+a_{3}y=x^{3},
\end{equation*}%
respectively. In this case, the elliptic divisibility sequences associated
to an elliptic curve with the torsion point of order two or three give
improper sequences and the initial values of these sequences are 
\begin{equation*}
h_{1}=1,~h_{2}=0,~h_{3}=-b^{2},~h_{4}=0
\end{equation*}%
and 
\begin{equation*}
h_{1}=1,~h_{2}=a_{3},~h_{3}=0,~h_{4}=-a_{3}^{5},
\end{equation*}%
respectively.
\end{remark}

Under these considerations, an easy computation gives the general terms of
the improper divisibility sequences.

\begin{theorem}
\textbf{i.} Let $(h_{n})~$be an elliptic divisibility sequence $%
[1;~0;~-b^{2};~0]$. Then the general term of $(h_{n})$ given by the
following formula:%
\begin{equation*}
h_{n}=\varepsilon b^{\{(n^{2}-1)/4\}}
\end{equation*}%
where%
\begin{equation*}
\varepsilon =\left\{ 
\begin{array}{ll}
+1 & \text{if }n\text{ }\equiv 1,5~(8)\text{ \ \ }\  \\ 
-1 & \text{if }n\text{ }\equiv 3,7~(8)\text{.}%
\end{array}%
\right.
\end{equation*}%
\textbf{ii.} Let $(h_{n})~$be an elliptic divisibility sequence $[1;$ ~$%
a_{3};~~0;~-a_{3}^{5}]$. Then the general term of $(h_{n})$ given by the
following formula:%
\begin{equation*}
h_{n}=\varepsilon a_{3}^{\{(n^{2}-1)/3\}}
\end{equation*}%
where%
\begin{equation*}
\varepsilon =\left\{ 
\begin{array}{ll}
+1 & \text{if }n\text{ }\equiv 1,2~(6)\text{ \ \ }\  \\ 
-1 & \text{if }n\text{ }\equiv 4,5~(6)\text{.}%
\end{array}%
\right.
\end{equation*}
\end{theorem}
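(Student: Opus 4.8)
The plan is to reuse the inductive strategy of the proof of Theorem 3.2, specializing the fundamental recurrence (\ref{1.1}) so that the vanishing terms of the improper sequence annihilate one summand on the right-hand side. The first step is to record which terms are zero. In part \textbf{i}, the hypothesis $h_2=0$ together with the EDS divisibility property gives $h_2\mid h_n$ for every even $n$, i.e. $0\mid h_n$, so $h_n=0$ whenever $n$ is even; likewise in part \textbf{ii} the hypothesis $h_3=0$ forces $h_n=0$ whenever $3\mid n$. Hence the claimed formula need only be checked on the indices where its exponent is a nonnegative integer, namely odd $n$ in part \textbf{i} and $n$ coprime to $3$ in part \textbf{ii}. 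The initial values $[1;0;-b^2;0]$ and $[1;a_3;0;-a_3^5]$ are those recorded in Remark 3.2 (which in turn follow from Theorem 2.4).

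For part \textbf{i} I would put $n=2$ in (\ref{1.1}) and use $h_1=1,\ h_2=0,\ h_3=-b^2$, which collapses the recurrence to $h_{m+2}h_{m-2}=b^2h_m^2$. Since the nonzero terms are exactly the odd ones, I induct on odd $m$ in steps of $2$ with base cases $h_1=1$ and $h_3=-b^2$; assuming the formula for $h_{m-2}$ and $h_m$ one solves $h_{m+2}=b^2h_m^2/h_{m-2}$, and a short computation shows the exponents satisfy $2+\tfrac{(m^2-1)}{2}-\tfrac{((m-2)^2-1)}{4}=\tfrac{((m+2)^2-1)}{4}$, while the sign is correct because $\varepsilon$ is $4$-periodic on the odd integers, so $\varepsilon_{m+2}=\varepsilon_{m-2}$ and the surviving sign factor is exactly the asserted one. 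For part \textbf{ii} I would instead set $n=3$ in (\ref{1.1}); since $h_3=0$ kills the term $h_{m+1}h_{m-1}h_3^2$, and with $h_2=a_3,\ h_4=-a_3^5$ the recurrence becomes $h_{m+3}h_{m-3}=a_3^6h_m^2$. Here the nonzero indices split into the two residue classes $1$ and $2$ modulo $3$, so I induct in steps of $3$ within each class, the base cases being $h_1=1,\ h_4=-a_3^5$ for the first class and $h_2=a_3,\ h_5=-a_3^8$ for the second; the value $h_5=-a_3^8$ is obtained beforehand from the duplication formula $h_5=h_4h_2^3-h_1h_3^3$. The exponent identity $6+\tfrac{2(m^2-1)}{3}-\tfrac{((m-3)^2-1)}{3}=\tfrac{((m+3)^2-1)}{3}$ together with the $6$-periodicity of $\varepsilon$ (giving $\varepsilon_{m+3}=\varepsilon_{m-3}$) then closes the induction.

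The argument is essentially bookkeeping once the two collapsed recurrences are in hand, so I do not expect a genuine obstacle. The points that need care are: verifying the base cases, in particular the preliminary computation of $h_5$ for part \textbf{ii}; confirming that the periodicity of $\varepsilon$ really yields $\varepsilon_{m\pm2}$ (resp. $\varepsilon_{m\pm3}$) equal, so that the sign produced by $h_{m+2}=b^2h_m^2/h_{m-2}$ (resp. $h_{m+3}=a_3^6h_m^2/h_{m-3}$) agrees with the asserted $\varepsilon$; and the mild subtlety that it is precisely ``$0\mid h_n$'' that forces the even-indexed (resp. triple-indexed) terms to vanish. None of these is serious, and the structure mirrors the proofs already sketched for Theorem 3.2.
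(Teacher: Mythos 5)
Your argument is correct, and it is essentially the computation the paper leaves implicit: the paper offers no written proof of this theorem beyond the phrase ``an easy computation gives,'' and your specialization of the recurrence (to $h_{m+2}h_{m-2}=b^{2}h_m^{2}$ and $h_{m+3}h_{m-3}=a_3^{6}h_m^{2}$) followed by induction within residue classes is exactly the scheme used in the paper's proof of Theorem 3.2. The exponent identities, the sign bookkeeping via the $4$- and $6$-periodicity of $\varepsilon$, and the preliminary computation $h_5=-a_3^{8}$ all check out.
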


\section{The periods of the EDSs}

In this section we will give the periods of all elliptic divisibility
sequences with zero terms by using general terms of these sequences which
are given in previous section.

A sequence $(s_{n})$ of rational integers is said to be \textit{numerically
periodic} \textit{modulo} $m$ if there exists a positive integer $\pi $ such
that 
\begin{equation}
s_{n+\pi }\equiv s_{n}~(m)  \label{a4}
\end{equation}%
for all sufficiently large $n$. If (\ref{a4}) holds for all $n$, then$%
~(s_{n})$ is said to be \textit{purely periodic} \textit{modulo} $m$. The
smallest $\pi $ for which (\ref{a4}) is true is called the \textit{period }%
of $(s_{n})$ modulo $m$. All other $\pi $'s are multiples of it.

The following theorem of Ward shows us how the period and rank are connected.

\begin{theorem}
\cite{MW2} Let $(h_{n})$ be an EDS and $p$ be an odd prime whose rank of
apparition $\rho $ is greater than $3$. Let $a_{1}$ be an integral solution
of the congruence $a_{1}\equiv h_{2}/h_{\rho -2}~(p)$ and let $e$ and $k$ be
the exponents to which $a_{1}$ and $a_{2}\equiv h_{\rho -1}$ $(p)$
respectively belong modulo $p$. Then $(h_{n})$ is purely periodic modulo $p$%
, and its period $\pi $ is given by the formula $\pi (h_{n})=\tau \rho $
where $\tau =2^{\nu }[e,~k].$ Here $[e,~k]$ is the least common multiple of $%
e$ and $k,$ and the exponent $\nu $ is determined as follows:%
\begin{equation*}
\nu =\left\{ 
\begin{array}{ll}
+1 & \text{ if }e\text{ and }k\text{ are both odd \ } \\ 
-1 & 
\begin{array}{l}
\text{if }e\text{ and }k\text{ are both even and both divisible by} \\ 
\text{ \ \ \ \ \ \ exactly the same power of }2%
\end{array}
\\ 
~\text{\ }0 & \text{ otherwise.}%
\end{array}%
\right.
\end{equation*}
\end{theorem}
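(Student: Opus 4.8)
The plan is to give a self-contained reconstruction of Ward's argument, built on the single structural fact that, modulo $p$, shifting the index by the rank $\rho$ multiplies $h_n$ by a factor that is \emph{geometric} in $n$. First I would extract the fundamental congruence: put $n=\rho$ in the addition law $(1.1)$ and use $h_{-n}=-h_n$ together with $h_\rho\equiv0\ (p)$ and Theorem 2.3 (which also guarantees $h_{\rho\pm1}\not\equiv0$, while minimality of $\rho$ gives $h_{\rho-1},h_{\rho-2}\not\equiv0$), obtaining
\[
h_{m+\rho}h_{m-\rho}\equiv -\,h_{\rho-1}h_{\rho+1}\,h_m^2\pmod p
\]
for every $m$. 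Writing $C=h_{\rho-1}h_{\rho+1}$ and taking $m=n+\rho$ yields the step-$\rho$ recursion $\phi_{n+\rho}\equiv -C\phi_n$ for the multiplier $\phi_n=h_{n+\rho}/h_n$.

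The key lemma, and the main obstacle, is to upgrade this to the quasi-periodicity $h_{n+\rho}\equiv AB^{\,n}h_n\ (p)$ valid for \emph{all} $n$. I would pin down $A,B$ from the cases $n=1,2$: applying $(1.1)$ with $m=\rho$ gives $h_{\rho+1}\equiv C/a_2$ and $h_{\rho+2}\equiv Ca_1h_2$, whence $B\equiv a_1a_2$ and $A\equiv -a_1a_2^2$, where $a_1\equiv h_2/h_{\rho-2}$ and $a_2\equiv h_{\rho-1}$ are exactly the quantities in the statement (along the way one checks $C\equiv -a_1^2a_2^4$). The claim $h_{n+\rho}\equiv AB^{\,n}h_n$ is then proved by induction on $n$, feeding $(1.1)$ and $h_\rho\equiv0$ into a three-term relation linking the values at $n-1,n,n+1$. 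The computation is routine but this induction carries the real content, since it is equivalent to the family of identities $h_{j+1}h_{\rho-j}\equiv a_1a_2\,h_jh_{\rho-j-1}$ that make the multiplier geometric.

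Granting the lemma, I would iterate it to obtain
\[
h_{n+\tau\rho}\equiv A^{\tau}\,B^{\,\tau n+\rho\binom{\tau}{2}}\,h_n\pmod p .
\]
Because Theorem 2.3 forces the zeros of $(h_n)\bmod p$ to occur exactly at multiples of $\rho$, any period must have the form $\pi=\tau\rho$; imposing $h_{n+\tau\rho}\equiv h_n$ for all $n$ then splits into the $n$-dependent requirement $B^{\tau}\equiv1$ and the constant requirement $A^{\tau}B^{\rho\binom{\tau}{2}}\equiv1$. Since all these congruences hold for every integer $n$ rather than only for large $n$, the periodicity produced is automatically pure.

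Finally I would translate the two conditions into order data. Writing $A\equiv -a_2B$, the condition $B^{\tau}\equiv1$ forces $\mathrm{ord}(a_1a_2)\mid\tau$, while $A^{\tau}B^{\rho\binom{\tau}{2}}\equiv1$ injects the factor $a_2^{\tau}$ and the sign $(-1)^{\tau}$, together forcing $\mathrm{lcm}\bigl(\mathrm{ord}(a_1a_2),\mathrm{ord}(a_2)\bigr)\mid\tau$. In the cyclic group $\mathbb{F}_p^{\ast}$ one has $\mathrm{lcm}\bigl(\mathrm{ord}(a_1a_2),\mathrm{ord}(a_2)\bigr)=\mathrm{lcm}\bigl(\mathrm{ord}(a_1),\mathrm{ord}(a_2)\bigr)=[e,k]$, so $[e,k]\mid\tau$. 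The remaining gap between $[e,k]$ and the true minimal $\tau$ comes entirely from the sign $(-1)^{\tau}$ and the quadratic exponent $\rho\binom{\tau}{2}$: a $2$-adic valuation analysis of $e$ and $k$ shows that one must multiply $[e,k]$ by $2^{+1}$ when both are odd (to absorb the sign), may divide by $2$ when $e$ and $k$ are even with equal $2$-adic valuation, and otherwise leaves $[e,k]$ unchanged, which is precisely the trichotomy defining $\nu$. This power-of-two bookkeeping is the second delicate point; with it one concludes $\tau=2^{\nu}[e,k]$ and $\pi=\tau\rho$.
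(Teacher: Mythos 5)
First, a point of comparison: the paper does not prove this statement at all --- it is Theorem 4.1, quoted from Ward's Memoir \cite{MW2} and used as a black box in the proof of Theorem 4.2 --- so there is no in-paper argument to measure yours against. Your outline does follow the strategy of Ward's original proof: establish the quasi-periodicity $h_{n+\rho}\equiv AB^{n}h_{n}\pmod{p}$, identify $A\equiv -a_{1}a_{2}^{2}$ and $B\equiv a_{1}a_{2}$ in terms of $a_{1}\equiv h_{2}/h_{\rho-2}$ and $a_{2}\equiv h_{\rho-1}$, iterate, and read off the period from multiplicative orders. The first half of the sketch is internally consistent, although the central lemma (that the multiplier $h_{n+\rho}/h_{n}$ is geometric in $n$) is exactly where Ward's work lies and you only assert that the induction is routine.

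The genuine gap is in the final paragraph. After iteration the two requirements on $\tau$ are $B^{\tau}\equiv1$ and $(-a_{2})^{\tau}B^{\rho\binom{\tau}{2}}\equiv1$; note that the second constrains $\mathrm{ord}(-a_{2})$ rather than $\mathrm{ord}(a_{2})$ as you claim (for odd $\tau$ it only forces $a_{2}^{\tau}\equiv-1$), and that $B^{\rho\binom{\tau}{2}}$ need not be $1$ when $\tau$ is even and $\rho$ is odd. More importantly, the minimal $\tau$ satisfying these two conditions is \emph{not} a function of $e$ and $k$ alone, so no ``$2$-adic valuation analysis of $e$ and $k$'' can by itself produce the trichotomy defining $\nu$. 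Concretely, take $p=17$ and $a_{1}=a_{2}=9$, so that $e=k=8$ and the theorem demands $\tau=2^{-1}[8,8]=4$; here $\mathrm{ord}(a_{1}a_{2})=4$ and $\mathrm{ord}(-a_{2})=8$, and with $\tau=4$ the second condition reads $(-1)\cdot(-1)^{\rho}\equiv1$, which holds only when $\rho$ is odd. What rescues the clean formula is a relation your sketch never derives: applying the quasi-periodicity at $-n$ together with $h_{-n}=-h_{n}$, twice, forces $A^{2}\equiv B^{\rho}$, i.e. $a_{2}^{2}\equiv(a_{1}a_{2})^{\rho-2}$, and it is precisely this identity that ties the parity of $\rho$ to the $2$-adic valuations of $e$ and $k$ and makes the $\rho$-dependence cancel out of the answer (in the example above it forces $\rho\equiv3\ (4)$). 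Without this constraint the case analysis for $\nu$ cannot be completed, so the proof as proposed does not close.
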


We give the period of $(h_{n})$ in the following theorem.

\begin{theorem}
\bigskip Let $(h_{n})~$be an elliptic divisibility sequence with $N$-th term
zero, where $N\in \{4,5,...,10,12\}$ and let $p$ be an odd prime. Then the
period of $(h_{n})$ is 
\begin{equation*}
\pi (h_{n})=\left\{ 
\begin{array}{ll}
t(p-1) & \text{if }q=[e,k]~\text{is a primitive root modulo }p \\ 
2Nl & \text{otherwise}%
\end{array}%
\right.
\end{equation*}%
where $l=\left\{ 
\begin{array}{ll}
q & \text{if }q~\text{is odd} \\ 
q/2\text{ } & \text{if }q~\text{is even,}%
\end{array}%
\right. t=\left\{ 
\begin{array}{ll}
N & \text{if }N~\text{is even} \\ 
N/2\text{ } & \text{if }N~\text{is odd.}%
\end{array}%
\right. $
\end{theorem}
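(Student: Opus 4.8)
The plan is to apply Ward's period formula (Theorem 4.1) to the explicit general terms obtained in Theorem 3.2, and to translate the abstract quantities $e$, $k$, $\nu$ of that formula into the concrete exponents appearing in the claimed answer. Ward's theorem gives $\pi(h_n) = \tau\rho$ with $\tau = 2^{\nu}[e,k]$, where $\rho$ is the rank of apparition of $p$ and $e$, $k$ are the multiplicative orders modulo $p$ of $a_1 \equiv h_2/h_{\rho-2}$ and $a_2 \equiv h_{\rho-1}$. Since we are dealing with a sequence whose $N$-th term is zero, the first step is to identify $\rho$: by Theorem 3.3 (Ward's rank-spacing theorem) the zero terms occur exactly at multiples of $N$, so for a prime $p$ not dividing the relevant parameters the rank of apparition is $\rho = N$. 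This fixes $\rho = N$ in Ward's formula.

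Next I would compute the two orders $e$ and $k$ in terms of the single quantity $q = [e,k]$. The idea is that the general terms in Theorem 3.2 are all of the form $\varepsilon$ times a product of fixed parameters ($\alpha$, $\alpha\pm 1$, $2\alpha-1$, etc.) raised to powers that are explicit quadratic functions of $n$. Substituting $n = \rho-2$, $n = \rho-1$ into those formulas gives $a_1$ and $a_2$ as explicit monomials in the parameters modulo $p$, and hence their orders $e$, $k$ are read off directly. I expect that $a_1$ and $a_2$ turn out to be related (a power of one being congruent to a power of the other modulo $p$), so that $[e,k]$ collapses to a single order $q$; this is what lets the final answer be phrased solely through $q$ rather than through $e$ and $k$ separately.

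The two branches of the stated formula then correspond to the two cases in the computation of $q$ relative to $p-1$. If $q$ is a primitive root modulo $p$ in the sense that the relevant base has order $q = p-1$, the lcm of the orders realizes the full multiplicative group and one gets $\pi = \tau\rho$ with $\tau$ proportional to $p-1$; tracking the factor $2^{\nu}$ and $\rho = N$ produces the $t(p-1)$ branch, where the definition of $t$ (namely $N$ or $N/2$ according to the parity of $N$) exactly absorbs the power-of-two bookkeeping in $\tau$. In the complementary case the order is the odd part or half of $q$, giving the $2Nl$ branch with $l$ defined by the parity of $q$. The main obstacle will be the careful parity analysis hidden in the exponent $\nu = \pm 1, 0$: one must verify, case by case on $N \in \{4,5,\dots,10,12\}$ and on the residue of $n$, that the power of $2$ dividing $e$ and $k$ behaves uniformly enough that the single clean rule $t = N$ or $N/2$ holds in every case. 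I would organize this as a finite check over the values of $N$, using the explicit exponent functions from Theorem 3.2 to pin down $e$, $k$ and their $2$-adic valuations, and then confirm that Ward's $2^{\nu}[e,k]\cdot\rho$ simplifies to the stated piecewise expression.
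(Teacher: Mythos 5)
Your plan is essentially the paper's own argument: the paper likewise applies Ward's period formula with $\rho=N$, computes $a_{1}\equiv h_{2}/h_{\rho-2}$ and $a_{2}\equiv h_{\rho-1}$ explicitly from the general terms (finding, e.g., $a_{2}=1/a_{1}^{2}$ for $N=6$, so that $[e,k]$ collapses to a single order $q$), and then splits into the primitive-root case and the two parity cases for $q$, verifying the remaining values of $N$ by the same finite check. The only difference is one of completeness, not of method: the paper carries out the computation in detail only for $N=6$.
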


\begin{proof}
The cases $N=4$ and $5$, can easily be seen, so we give $N=6$, other cases
can be proved in similar way. In this case the period of $(h_{n})$ is $\pi
(h_{n})=6(p-1)$ or $12l$. By Theorem 3.1 and Theorem 4.1, we have $%
a_{1}=h_{2}/h_{4}$ $=-1/\alpha ^{5}(\alpha +1)^{4}$ and $a_{2}=h_{5}=\alpha
^{10}(\alpha +1)^{8}$. Let $e$ and $k$ be the orders of $a_{1}$ and $a_{2}$,
respectively. Then $k=e/2$ when $e$ is even, and $k=e$ when $e$ is odd,
since $a_{2}=1/a_{1}^{2}$. Let $a_{1}$ be a primitive root modulo $p$. Then $%
e=p-1$, $k=(p-1)/2$ and so $q=p-1$. Hence $\nu =0$ and so $\tau $ $=p-1$.
Therefore in this case $\pi (h_{n})=6(p-1)$. If $a_{1}~$is not a primitive
root modulo $p$, then there are two cases. In the first case, let $q$ be
odd. Then $e=$ $k=q$, so that $\nu =1.$ Thus $\tau $ $=q,$ therefore $\pi
(h_{n})=6q$. In the second case, let $q$ be even. Then $e=q$ and $k=q/2$, so
that $\nu =0.$ Thus $\tau $ $=2q$ therefore $\pi (h_{n})=12q.$
\end{proof}

\begin{table}[th]
\label{eqtable} \renewcommand\arraystretch{0.9} \noindent%
\begin{equation*}
\begin{array}{|c|c|c|c|c|c|c|c|c|c|c|c|}
\hline
N=6 & \mathbb{F}_{5} & \mathbb{F}_{7} & \mathbb{F}_{11} & \mathbb{F}_{13} & 
\mathbb{F}_{17} & \mathbb{F}_{19} & \mathbb{F}_{23} & \mathbb{F}_{29} & 
\mathbb{F}_{31} & \mathbb{F}_{37} & \mathbb{F}_{41} \\ \hline
\alpha=-5 & - & 36 & 60 & 72 & 96 & 108 & 132 & 84 & 180 & 72 & 120 \\ \hline
\alpha=-4 & 12 & 12 & 60 & 36 & 12 & 108 & 132 & 84 & 180 & 36 & 12 \\ \hline
\alpha=-3 & 24 & 36 & 60 & 12 & 96 & 36 & 12 & 168 & 180 & 108 & 240 \\ 
\hline
\alpha=-2 & 24 & 36 & 12 & 72 & 48 & 108 & 132 & 168 & 12 & 216 & 24 \\ 
\hline
\alpha=1 & 12 & 36 & 60 & 36 & 12 & 108 & 132 & 84 & 60 & 108 & 60 \\ \hline
\alpha=2 & 24 & 36 & 60 & 24 & 48 & 36 & 132 & 168 & 180 & 216 & 24 \\ \hline
\alpha=3 & 24 & 12 & 60 & 36 & 96 & 108 & 132 & 168 & 180 & 36 & 240 \\ 
\hline
\alpha=4 & - & 36 & 60 & 36 & 12 & 108 & 132 & 12 & 36 & 36 & 60 \\ \hline
\alpha=5 & - & 36 & 60 & 72 & 96 & 108 & 132 & 84 & 36 & 216 & 120 \\ \hline
\end{array}%
\end{equation*}%
\par
\begin{center}
{\footnotesize The periods of the EDSs with rank six for some integer $%
\alpha $ parameter}
\end{center}
\end{table}
The periods of the elliptic divisibility sequences for which the sixth term
is zero modulo $p$ for some integer parameter $\alpha $ and $p>3$, appears
in the tablo above.

\section{Squares and cubes in EDSs}

The question of when a term of a Lucas sequence can be square has generated
interest in the literature \cite{AB1,AB2,PR1,PR2}. Similar results
concerning cubes were also obtained for specific sequences such as
Fibonacci, Lucas and Pell numbers \cite{AP, PR}. In \cite{BO1,BO2}, we
describe when a term of an elliptic divisibility sequence can be a square or
a cube, if one of the first six terms is zero.

The ultimate purpose of this section is to determine square or cube terms in
some special family of the elliptic divisibility sequences whose contain a
zero term. In this section we determine square or cube terms of these
sequences by using the general terms of them. In particular, we will
investigate the answers of the following questions:

\begin{itemize}
\item \textit{Which terms of }$(h_{n})$\textit{\ can be a square or a cube
independent of }$\alpha $\textit{\ }? This question is answered for each
case. For example, consider an elliptic divisibility sequence for which
sixth term is zero, \newline
\textit{i}. if $n\equiv 1$, $5$, $7$, $11~(12)$,$~$then $h_{n}=\square $ for
all $\alpha \in 
\mathbb{Z}
\backslash \{-1$, $0\}$,\newline
\textit{ii. }if $n\equiv 1$, $3$, $9$, $15$, $17~(18)$,$~$then $h_{n}=C$ for
all $\alpha \in 
\mathbb{Z}
\backslash \{-1$, $0\}$.

\item \textit{Which terms of }$(h_{n})$\textit{\ can not be a square or a
cube }? Starting with the fact that square or cube terms can be arise
dependent on the parameter $\alpha $ it is seen that some terms of $(h_{n})$%
\textit{\ }can not be a square or a cube for any choice of $\alpha $ for
each case. For example, consider an elliptic divisibility sequence for which
sixth term is zero, \newline
\textit{i}. if $n\equiv 2$, $3$, $9$, $10~(12)$, then $h_{n}$ is not a
square for all $\alpha \in 
\mathbb{Z}
\backslash \{-1$, $0\}$, \newline
\textit{ii}. if $n\equiv 2$, $5$, $7$, $11$, $13$, $16~(18)$, then $h_{n}$
is not a cube for all $\alpha \in 
\mathbb{Z}
\backslash \{-1,0\}$.

\item \textit{Which terms of }$(h_{n})$\textit{\ can be a square or a cube
with admissible choice of }$\alpha $ ? In addition to square or cube terms
which determined in question one it is seen that a term of an EDS can be a
square or a cube depending on the admissible choice of $\alpha $. For
example, consider an elliptic divisibility sequence for which sixth term is
zero, \newline
\textit{i}. if $n\equiv 4$, $8~(12)$ then $h_{n}$ is a square iff $\alpha
+1=\square $, \newline
\textit{ii}. if $n\equiv 4$, $14~(18)$ then $h_{n}$ is a cube iff $\alpha
+1=C$, \newline
\textit{iii}. if $n\equiv 8$, $10~(18)$ then $h_{n}$ is a cube iff $\alpha
=C $.
\end{itemize}

Especially when we look for the answers of our problems we deduce some
equations whose solutions give the desired answers. The problem now reduces
to establishing that Diophantine equations which are divided into five main
classes, then finding all solutions of these equations and observing whether
these solutions give the desired $\alpha $. It follows that observing the
solutions of some Pell equations, classical equations, trivial equations and
finding the integral points on elliptic curves with rank zero or one. Now we
have many equations of similar type and so we make a table to bring together
all of these equations and their solutions can observe from this chart. For
example, consider an elliptic divisibility sequence for which the eighth
term is zero. If $n\equiv 3$, $13~(16)$,$~$then we see that $h_{n}=\square $
iff 
\begin{equation*}
(\alpha -1)(2\alpha -1)=\square
\end{equation*}%
this leads to a Pell's equation $(4\alpha -3)^{2}-8\beta ^{2}=1$ or a
trivial equation $(4\alpha -3)^{2}+8\beta ^{2}=1$ where $\alpha $, $\beta $
are integers. Equations encountered in some cases turned into elliptic
curves. In particular, if we have an elliptic curve with rank zero then the
only integral points on this curve are the torsion points. These, in turn,
can be computed \ by the Lutz-Nagell Theorem. If the elliptic curve has a
rank different from zero then the \textit{Elliptic Logarithm Method} is
applied to find the all integral solutions. Throughout this paper the
symbols $\square $ and $C$ mean a square and a cube of a non-zero rational
number, i.e. $\square =\pm \beta ^{2}$ where $\beta $ is an integer.

A basic observation is the following: For every equation, the distinct
irreducible factors (over $%
\mathbb{Q}
\lbrack \alpha ]$) appearing in the left-hand side (if they are at least
two) are pairwise relatively prime.$^{1}$ This implies that, if the right
hand-side is $\square $ (respectively, $C$), then every irreducible factor
is $\square $ (respectively, $C$). We use of this fact, for a quite number
of equations, it turns out that it is not necessary to make use of this fact
for all irreducible factor of the left-hand side.$^{2}$

\begin{equation*}
\text{Solutions of the equations}
\end{equation*}%
\newline
$%
\begin{tabular}{||c|c|c|c||}
\hline\hline
\textbf{Eq. No} & 
\begin{tabular}{c}
\textbf{implies} \\ 
\textbf{or} \\ 
\textbf{is equivalent to}%
\end{tabular}
& 
\begin{tabular}{c}
\textbf{is reduced to} \\ 
\textbf{or} \\ 
\textbf{is equivalent to}%
\end{tabular}
& \textbf{Comments} \\ \hline
{\small 1} & ${\small \alpha (\alpha +1)=\square }$ & ${\small (2\alpha +1)}%
^{2}{\small \pm \beta }^{2}{\small =1}$ & {\small trivial eq.} \\ \hline
{\small 5} & ${\small \alpha (\alpha -1)=\square }$ & ${\small (2\alpha -1)}%
^{2}{\small \pm \beta }^{2}{\small =1}$ & {\small trivial eq.} \\ \hline
{\small 2, 3, 4} & ${\small \alpha =\beta }_{1}^{3}${\small \ \& }${\small %
\alpha +1=\beta }_{2}^{3}$ & ${\small \beta }_{2}^{3}{\small -\beta }_{1}^{3}%
{\small =1}$ & {\small trivial eq.} \\ \hline
\begin{tabular}{c}
{\small 6, 10, 15, } \\ 
{\small 16,17, 18, } \\ 
{\small 19, 29,30, } \\ 
{\small 38, 39, 40} \\ 
{\small 41, 42}%
\end{tabular}
& 
\begin{tabular}{c}
${\small \alpha =\beta }_{1}^{3}${\small \ }${\small \&}${\small \ }${\small %
\alpha -1=\beta }_{2}^{3}$%
\end{tabular}
& ${\small \beta }_{1}^{3}{\small -\beta }_{2}^{3}{\small =1}$ & {\small %
trivial eq.} \\ \hline
{\small 7} & ${\small (\alpha -1)(2\alpha -1)=\square }$ & $%
\begin{tabular}{c}
${\small (4\alpha -3)}^{2}{\small -8\beta }^{2}{\small =1}$ \\ 
${\small (4\alpha -3)}^{2}{\small +8\beta }^{2}{\small =1}$%
\end{tabular}%
$ & 
\begin{tabular}{c}
{\small Pell eq.} \\ 
{\small trivial eq.}%
\end{tabular}
\\ \hline
{\small 8} & ${\small \alpha (2\alpha -1)=\square }$ & 
\begin{tabular}{c}
${\small (2\alpha -1)}^{2}{\small -2\beta }^{2}{\small =1}$ \\ 
${\small (2\alpha -1)}^{2}{\small +2\beta }^{2}{\small =1}$%
\end{tabular}
& 
\begin{tabular}{c}
{\small Pell eq.} \\ 
{\small trivial eq.}%
\end{tabular}
\\ \hline
\begin{tabular}{c}
{\small 11, 25, 47} \\ 
{\small 48, 49}%
\end{tabular}
& ${\small \alpha -1=\beta }_{1}^{3}${\small \ }${\small \&}${\small \ }$%
{\small 2\alpha -1=\beta }_{2}^{3}$ & ${\small \beta }_{2}^{3}{\small %
+2(-\beta }_{1}^{3}{\small )=1}$ & 
\begin{tabular}{c}
{\small `classical'} \\ 
{\small equation}$^{3}$%
\end{tabular}
\\ \hline
{\small 12} & ${\small (\alpha -1)(\alpha }^{2}{\small -\alpha +1)=\square }$
& ${\small \alpha }^{3}{\small -2\alpha }^{2}{\small +2\alpha -1=\pm \beta }%
^{2}$ & {\small zero rank}$^{4}$ \\ \hline
{\small 13} & ${\small \alpha }^{2}{\small -\alpha +1=\square }$ & ${\small %
(2\alpha -1)}^{2}{\small \pm \beta }^{2}{\small =-3}$ & {\small trivial eq.}
\\ \hline
{\small 14, 20, 21} & ${\small \alpha }^{2}{\small -\alpha +1=C}$ & ${\small %
\beta }^{3}{\small -48=(8\alpha -4)}^{2}$ & {\small Ellog used}$^{5}$ \\ 
\hline
{\small 9} & ${\small \alpha (\alpha -1)(2\alpha -1)=\square }$ & 
\begin{tabular}{c}
${\small (2\alpha )}^{3}{\small -3(2\alpha )}^{2}{\small +2(2\alpha )=\beta }%
^{2}$ \\ 
${\small (-2\alpha )}^{3}{\small +3(-2\alpha )}^{2}{\small +2(-2\alpha
)=\beta }^{2}$%
\end{tabular}
& 
\begin{tabular}{c}
{\small zero rank}$^{4}$ \\ 
{\small zero rank}$^{4}$%
\end{tabular}
\\ \hline
{\small 22} & ${\small -\alpha }^{2}{\small +3\alpha -1=\square }$ & $%
{\small (2\alpha -3)}^{2}{\small \pm \beta }^{2}{\small =5}$ & {\small %
trivial eq.} \\ \hline
{\small 23} & ${\small (2\alpha -1)(-\alpha }^{2}{\small +3\alpha
-1)=\square }$ & 
\begin{tabular}{c}
${\small (-2\alpha )}^{3}{\small +7(-2\alpha )}^{2}{\small +10(-2\alpha
)+4=\beta }^{2}$ \\ 
${\small (2\alpha )}^{3}{\small -7(2\alpha )}^{2}{\small +10(2\alpha
)-4=\beta }^{2}$%
\end{tabular}
& 
\begin{tabular}{c}
{\small Ellog used}$^{5}$ \\ 
{\small zero rank}$^{4}$%
\end{tabular}
\\ \hline
\begin{tabular}{c}
{\small 24, 26,} \\ 
{\small 27, 28}%
\end{tabular}
& ${\small -\alpha }^{2}{\small +3\alpha -1=C}$ & ${\small \beta }^{3}%
{\small +80=(8\alpha -12)}^{2}$ & {\small Ellog used}$^{5}$ \\ \hline
{\small 31, 45, 46} & ${\small \alpha =\beta }_{1}^{3}${\small \ \& }$%
{\small 2\alpha -1=\beta }_{2}^{3}$ & ${\small (-\beta }_{2}{\small )}^{3}%
{\small +2\beta }_{1}^{3}{\small =1}$ & 
\begin{tabular}{c}
{\small `classical'} \\ 
{\small equation}$^{3}$%
\end{tabular}
\\ \hline
33 & $%
\begin{tabular}{c}
${\small \alpha (2\alpha -1)(2\alpha }^{2}{\small -2\alpha +1)}$ \\ 
${\small \times (3\alpha }^{2}{\small -3\alpha +1)=\square }$%
\end{tabular}%
$ & 
\begin{tabular}{c}
${\small (3\alpha )}^{3}{\small -3(3\alpha )}^{2}{\small +3(3\alpha )=\beta }%
^{2}$ \\ 
\\ 
${\small \alpha (2\alpha -1)<0}$%
\end{tabular}
& 
\begin{tabular}{c}
{\small zero rank}$^{4}$ \\ 
\\ 
{\small trivial eq.}%
\end{tabular}
\\ \hline\hline
\end{tabular}%
$\newline
\begin{equation*}
\text{\textit{continue to next page}}
\end{equation*}

\begin{equation*}
\text{\textit{continued from previous page}}
\end{equation*}%
$%
\begin{tabular}{||c|c|c|c||}
\hline\hline
\textbf{Eq. No} & 
\begin{tabular}{c}
\textbf{implies} \\ 
\textbf{or} \\ 
\textbf{is equivalent to}%
\end{tabular}
& 
\begin{tabular}{c}
\textbf{is reduced to} \\ 
\textbf{or} \\ 
\textbf{is equivalent to}%
\end{tabular}
& \textbf{Comments} \\ \hline
{\small 34} & ${\small (\alpha -1)(-2\alpha }^{2}{\small +2\alpha
-1)=\square }$ & 
\begin{tabular}{c}
${\small (-2\alpha )}^{3}{\small +4(-2\alpha )}^{2}{\small +6(-2\alpha
)+4=\beta }^{2}$ \\ 
${\small (2\alpha )}^{3}{\small -4(2\alpha )}^{2}{\small +6(2\alpha
)-4=\beta }^{2}$%
\end{tabular}
& 
\begin{tabular}{c}
{\small Ellog used}$^{5}$ \\ 
{\small zero rank}$^{4}$%
\end{tabular}
\\ \hline
{\small 32} & ${\small 3\alpha }^{2}{\small -3\alpha +1=\square }$ & 
\begin{tabular}{c}
${\small \beta }^{2}{\small -3(2\alpha -1)}^{2}{\small =1}$ \\ 
${\small \beta }^{2}{\small +3\alpha }^{2}{\small =-1}$%
\end{tabular}
& 
\begin{tabular}{c}
{\small Pell eq.} \\ 
{\small impossible}%
\end{tabular}
\\ \hline
{\small 35, 36} & ${\small \alpha (3\alpha }^{2}{\small -3\alpha +1)=\square 
}$ & 
\begin{tabular}{c}
${\small (3\alpha )}^{3}{\small -3(3\alpha )}^{2}{\small +3(3\alpha )=\beta }%
^{2}$ \\ 
${\small (-3\alpha )}^{3}{\small +3(-3\alpha )}^{2}{\small +3(-3\alpha
)=\beta }^{2}$%
\end{tabular}
& 
\begin{tabular}{l}
{\small zero rank}$^{4}$ \\ 
\multicolumn{1}{c}{{\small zero rank}$^{4}$}%
\end{tabular}
\\ \hline
{\small 37} & ${\small (2\alpha -1)(-2\alpha }^{2}{\small +2\alpha
-1)=\square }$ & 
\begin{tabular}{c}
${\small (4\alpha )}^{3}{\small -6(4\alpha )}^{2}{\small +16(4\alpha
)-16=\beta }^{2}$ \\ 
${\small (-4\alpha )}^{3}{\small +6(-4\alpha )}^{2}{\small +16(-4\alpha
)+16=\beta }^{2}$%
\end{tabular}
& 
\begin{tabular}{l}
{\small zero rank}$^{4}$ \\ 
\multicolumn{1}{c}{{\small zero rank}$^{4}$}%
\end{tabular}
\\ \hline
{\small 43, 44} & ${\small 2\alpha }^{2}{\small -2\alpha +1=C}$ & ${\small %
\beta }^{3}{\small -4=(4\alpha -2)}^{2}$ & {\small Ellog used}$^{5}$ \\ 
\hline\hline
\end{tabular}%
$

\begin{equation*}
\end{equation*}

{\Large Notes}

$^{1}${\small As a characteristic example, take equation 38. Firstly, }$%
\alpha ${\small \ can not have a common prime factor }$p${\small \ with any
of }$\alpha -1${\small , }$2\alpha -1${\small , }$2\alpha ^{2}-2\alpha +1$%
{\small , }$3\alpha ^{2}-3\alpha +1${\small . Indeed, }$\alpha \equiv 0$%
{\small \ }$(p)${\small \ implies that both }$\alpha -1${\small \ and }$%
2\alpha -1${\small \ are }$\equiv -1(p)${\small \ and both }$2\alpha
^{2}-2\alpha +1${\small \ and }$3\alpha ^{2}-3\alpha +1${\small \ are }$%
\equiv 1(p)${\small . Next, }$\alpha -1${\small \ can not have prime factor }%
$p${\small \ with any of }$2\alpha -1${\small , }$2\alpha ^{2}-2\alpha +1$%
{\small , }$3\alpha ^{2}-3\alpha +1${\small , because }$\alpha -1\equiv 0$%
{\small \ }$(p)${\small \ implies }$\alpha ${\small \ }$\equiv 1(p)${\small %
\ and, hence }$2\alpha ^{2}-2\alpha +1${\small \ and }$3\alpha ^{2}-3\alpha
+1${\small \ are both }$\equiv 1${\small \ }$(p)${\small . Analogously, }$%
2\alpha -1${\small \ can not have a common prime factor }$p${\small \ with
neither }$2\alpha ^{2}-2\alpha +1${\small \ nor }$3\alpha ^{2}-3\alpha +1$%
{\small , because, }$p${\small \ should be odd, hence }$\alpha \equiv 1/2$%
{\small \ }$(p)${\small \ and, consequently }$2\alpha ^{2}-2\alpha +1$%
{\small \ }$\equiv 1/2(p)${\small \ and }$3\alpha ^{2}-3\alpha +1\equiv
1/4(p)${\small . Finally, }$2\alpha ^{2}-2\alpha +1${\small \ and }$3\alpha
^{2}-3\alpha +1${\small \ are relatively prime because }$3(2\alpha
^{2}-2\alpha +1)-${\small \ }$2(3\alpha ^{2}-3\alpha +1)=1${\small .}

$^{2}${\small For example, although equation 44 implies that all three }$%
\alpha ${\small , }$2\alpha ^{2}-2\alpha +1${\small , }$3\alpha ^{2}-3\alpha
+1${\small \ are }$C${\small , we only use the fact that the second one is }$%
C${\small .}

{}{}$^{3}${\small The equation }$x^{3}+2y^{3}=1${\small \ has the integer
solution }$(x,y)=(-1,1)${\small , hence, by Theorem }$5${\small , Chapter 24
of \cite{MOR} can not have further solutions with }$xy\neq 0${\small .}

$^{4}${\small The only solutions are those given by coordinates of the
torsion points. These, in turn, can be computed by the Lutz-Nagell Theorem
(see, for example, Corollary 7.2, Chapter VIII.7 of \cite{JS});
automatically, they can be calculated using e.g. the PARI-}GP{\small \
calculator \cite{PAR} or the online MAGMA calculator \cite{MAG}.}

$^{5}${\small The \textit{Elliptic Logarithm Method} is applied. This has
been developed in \cite{STR} and, independently, in \cite{GEB} and now is
implemented in MAGMA \cite{MAG}; see also \cite{WBOS}.}

\subsection{The case $N=\ 4$.}

In this part we will answer the question of when a term of an EDS for which
the fourth term is zero can be a perfect square or a cube. Although the
terms of the EDSs can be a square or a cube dependent only on the parameter $%
\alpha $, there are cases when square or cube terms independent of the any
choice of $\alpha $. We first determine square or cube terms of the
sequences for which the fourth term is zero in the following theorem.

\begin{theorem}
Let $(h_{n})$ be an elliptic divisibility sequence for which the fourth term
is zero.\newline
\textit{i.} If $n\equiv 1$, $7~(8)$,$~$then $h_{n}=\square $ for all $\alpha
\in 
\mathbb{Z}
\backslash \{0\}$.\newline
ii. If $n\equiv 1$, $3$, $5$, $7~(8)$,$~$then $h_{n}=C$ for all $\alpha \in 
\mathbb{Z}
\backslash \{0\}$.
\end{theorem}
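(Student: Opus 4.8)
The plan is to use the explicit general term formula for the case $N=4$ given in Theorem 3.2, namely
\begin{equation*}
h_{n}=\varepsilon \,\alpha ^{\{(3n^{2}-p)/8\}},
\end{equation*}
where $\varepsilon \in \{\pm 1\}$ and $p\in \{3,4\}$ depend only on the residue class of $n$ modulo $8$ (respectively modulo $4$). Since the base $\alpha$ is a single integer parameter and the entire term is $\pm 1$ times a power of $\alpha$, everything reduces to analyzing two things: the parity of the exponent $\{(3n^{2}-p)/8\}$ and the sign $\varepsilon$. This is what makes the statement tractable --- there is no genuine Diophantine content here, only a congruence bookkeeping computation.

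For part (i), I would fix the residue $n \equiv 1 \pmod 8$ and $n \equiv 7 \pmod 8$ (these both lie in the $\varepsilon = +1$ branch, so the sign is not an obstruction), and use $p=3$ since $n \equiv 1,3 \pmod 4$ in both cases. Writing $n = 8m+1$ and $n = 8m+7$ I would compute the exponent $e_n = (3n^{2}-3)/8$ explicitly as a polynomial in $m$ and show it is even. For instance, $n=8m+1$ gives $3n^2 - 3 = 3(64m^2 + 16m) = 192m^2 + 48m$, so $e_n = 24m^2 + 6m = 6m(4m+1)$, which is manifestly even; the case $n = 8m+7$ is similar, giving $e_n = 24m^2 + 42m + 18 = 6(4m^2 + 7m + 3)$, again even. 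Hence $h_n = +\alpha^{e_n} = (\alpha^{e_n/2})^2 = \square$ for every nonzero integer $\alpha$. Here I would remark that $\varepsilon = +1$ is essential, because an even power of $\alpha$ is only a square when the accompanying sign is $+1$; the residues $n \equiv 1,7 \pmod 8$ are exactly the ones in the $+1$ branch whose exponents are even.

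For part (ii), I would instead check that the exponent $e_n = \{(3n^2 - p)/8\}$ is divisible by $3$ for each $n \equiv 1,3,5,7 \pmod 8$ (all odd $n$), which makes $\alpha^{e_n}$ a cube; and separately that the sign $\varepsilon = \pm 1$ is itself a cube (which it trivially is, since $(\pm 1)^3 = \pm 1$). Running through the four odd residues modulo $8$ with the appropriate value $p=3$, I would verify $3 \mid e_n$ in each case --- for example $n=8m+1$ gives $e_n = 6m(4m+1)$, divisible by $3$; $n = 8m+3$ gives $e_n = (3(64m^2+48m+9)-3)/8 = 24m^2 + 18m + 3 = 3(8m^2 + 6m + 1)$, and so on --- so that $h_n = \varepsilon \alpha^{e_n} = (\varepsilon^{1}\alpha^{e_n/3})^3 = C$. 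The point exploited here is that the sign poses no obstruction to being a cube, which is precisely why the cube statement covers all odd residues whereas the square statement covers only the two with $\varepsilon = +1$.

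The main obstacle is not conceptual but purely a matter of organizing the case analysis: one must correctly pair each residue $n \bmod 8$ with the right value of $p$ (via the residue $n \bmod 4$) and with the right sign $\varepsilon$, and then verify the divisibility of the resulting exponent by $2$ (for squares) or by $3$ (for cubes). Because the formula is an exact power of a single parameter, I expect no Diophantine difficulties at all; the only thing to be careful about is bookkeeping the eight residue classes and confirming that the excluded classes (e.g. $n \equiv 3,5 \pmod 8$ for the square case, where either the exponent is odd or $\varepsilon = -1$) genuinely fail, which explains why those residues are omitted from part (i).
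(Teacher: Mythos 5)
Your overall strategy is the paper's own: substitute each residue class of $n$ modulo $8$ into the closed form $h_{n}=\varepsilon \,\alpha ^{(3n^{2}-p)/8}$ of Theorem 3.2 and check divisibility of the exponent by $2$ or by $3$. Your exponent computations are all correct, and part (ii) is sound as written, since a sign $\pm 1$ is always a cube.

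There is, however, a genuine error in your sign analysis for part (i). You assert that $n\equiv 7~(8)$ lies in the $\varepsilon =+1$ branch and you call this fact ``essential.'' It does not: formula (\ref{8}) assigns $\varepsilon =-1$ to $n\equiv 2,3,7~(8)$, and a direct computation from the initial values $h_{1}=1$, $h_{2}=-\alpha $, $h_{3}=-\alpha ^{3}$, $h_{4}=0$ gives $h_{7}=-\alpha ^{18}$, which is negative for every $\alpha \neq 0$ and hence not a square in the ordinary sense. The theorem's claim for $n\equiv 7~(8)$ is true only because of the convention, stated at the start of Section 5, that $\square $ means $\pm \beta ^{2}$; under that convention the sign of $\varepsilon $ is irrelevant and only the parity of the exponent matters. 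So you must either invoke that convention explicitly --- in which case your remark that $\varepsilon =+1$ is essential should be deleted, and the exclusion of $n\equiv 3,5~(8)$ must be justified purely by exponent parity (the exponents $24m^{2}+18m+3$ and $24m^{2}+30m+9$ are odd) --- or, if $\square $ is read as a genuine square, the $n\equiv 7$ case of part (i) is false and your argument for it proves a false statement. As written, your claim that ``$n\equiv 1,7~(8)$ are exactly the residues in the $+1$ branch with even exponent'' is incorrect: in the $+1$ branch ($n\equiv 1,5,6~(8)$) only $n\equiv 1$ has an even exponent, while $n\equiv 7$ has an even exponent but sign $-1$.
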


\begin{proof}
For (\textit{i}), if $n\equiv 1$ or $7~(8)$,$~$then $n=8k+1$ or $8k+7$ $%
(k\in 
\mathbb{N}
)$. Substituting these values into (\ref{8}), we have 
\begin{equation*}
h_{8k+1}=\alpha ^{24k^{2}+6k},h_{8k+7}=-\alpha ^{24k^{2}+42k+18},
\end{equation*}%
respectively. Hence, $h_{n}=\square $ for all $\alpha \in 
\mathbb{Z}
\backslash \{0\}$.\newline
For (\textit{ii}), if $n\equiv 1$, $3$, $5$ or $7~(8)$,$~$then $n=8k+1$, $%
8k+3$, $8k+5$ or $8k+7$ $(k\in 
\mathbb{N}
)$. Putting these into (\ref{8}), we have 
\begin{equation*}
h_{8k+1}=\alpha ^{24k^{2}+6k},h_{8k+3}=-\alpha
^{24k^{2}+18k+3},h_{8k+5}=\alpha ^{24k^{2}+30k+9}\text{ }
\end{equation*}%
and 
\begin{equation*}
h_{8k+7}=-\alpha ^{24k^{2}+42k+18},
\end{equation*}%
respectively. Therefore $h_{n}=C$ for all $\alpha \in 
\mathbb{Z}
\backslash \{0\}$.\newline
\end{proof}

After having determined the square or cube terms independent of the any
choice of $\alpha $, we will turn to cases when a term of an EDS for which
the fourth term is zero can be a square or a cube dependent on the parameter 
$\alpha $ in the following theorem.

\begin{theorem}
Let $(h_{n})$ be an elliptic divisibility sequence for which the fourth term
is zero.\newline
i. $h_{n}=\square $ for all $n\in 
\mathbb{N}
$ iff $\alpha =\square $. \newline
ii. $h_{n}=C$ for all $n\in 
\mathbb{N}
$ iff $\alpha =C$.
\end{theorem}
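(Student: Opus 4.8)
The plan is to read the whole statement off the explicit general term recorded in Theorem 3.2, formula (\ref{8}): for an EDS with $h_4=0$ one has $h_n=\varepsilon\,\alpha^{e_n}$, where $\varepsilon\in\{+1,-1\}$ and the exponent $e_n=(3n^2-p)/8$ is a non-negative integer. Since in the case $N=4$ the base $\alpha$ is the only quantity appearing, the question of whether $h_n$ is a square or a cube collapses to a statement about the unit $\varepsilon$ together with the power $\alpha^{e_n}$. I would also flag at the outset the paper's convention that $\square=\pm\beta^2$ and $C=\gamma^3$, since it makes the sign $\varepsilon$ harmless in both parts.

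For the two ``if'' implications I would argue directly from (\ref{8}). If $\alpha=\square$, write $\alpha=\pm\beta^2$; then $\alpha^{e_n}=(\pm1)^{e_n}\beta^{2e_n}=\pm(\beta^{e_n})^2$, and multiplying by $\varepsilon=\pm1$ still leaves a quantity of the form $\pm(\beta^{e_n})^2$, so $h_n=\square$ for \emph{every} $n$. If instead $\alpha=C$, write $\alpha=\gamma^3$; then $\alpha^{e_n}=(\gamma^{e_n})^3$, and since $\varepsilon=\pm1=(\pm1)^3$ is itself a cube, $h_n=(\pm\gamma^{e_n})^3=C$ for every $n$. In both cases the point is that raising a square (resp.\ cube) to an arbitrary non-negative power and multiplying by a unit preserves the property, so no condition on $n$ is needed.

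For the two ``only if'' implications I would specialize the hypothesis, which asserts the property for all $n$, to a single convenient index. Taking $n=2$ in (\ref{8}) gives $p=4$ and $e_2=(3\cdot4-4)/8=1$ with $\varepsilon=-1$, hence $h_2=-\alpha$, in agreement with the initial value $h_2=-\alpha$ from Theorem 3.1. Thus if $h_n=\square$ for all $n$ then in particular $-\alpha=\pm\beta^2$, whence $\alpha=\mp\beta^2=\square$; and if $h_n=C$ for all $n$ then $-\alpha=\gamma^3$, whence $\alpha=(-\gamma)^3=C$. Any index whose exponent $e_n$ is odd (resp.\ not divisible by $3$) would serve just as well, but $n=2$ is the cleanest.

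The ``hard part'' here is not really an obstacle but a bookkeeping point: one must use the paper's non-standard meaning of $\square$ and $C$. Because $\square$ already permits a sign ($\pm\beta^2$) and because cubes absorb signs automatically, the alternating factor $\varepsilon=\pm1$ coming from (\ref{8}) never blocks the conclusion; were $\square$ to mean a genuine non-negative perfect square, the ``if'' direction would instead require tracking the sign of $\varepsilon$ case by case through the residues of $n$ modulo $8$. Given Theorem 3.2, the whole argument is therefore a short parity-and-sign check, with formula (\ref{8}) doing all the work.
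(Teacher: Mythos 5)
Your proposal is correct and follows essentially the same route as the paper: both arguments read everything off the explicit general term $h_n=\varepsilon\,\alpha^{e_n}$ from formula (\ref{8}), using the convention $\square=\pm\beta^2$ to absorb signs, with the ``only if'' direction coming from an index whose exponent is odd (resp.\ prime to $3$). The only cosmetic difference is that you specialize to the single index $n=2$, whereas the paper runs through the residue classes of $n$ modulo $8$ one at a time.
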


\begin{proof}
For (\textit{i})\textit{,} we have seen that if $n\equiv 1$, $7~(8)$,$~$then 
$h_{n}=\square $ for all $\alpha \in 
\mathbb{Z}
\backslash \{0\}$ in the previous theorem. Now will see in which cases a
term of $(h_{n})$ can be a perfect square. Consider the case $n\equiv 5~(8)$%
. Then $n=$ $8k+5$ for $k\in 
\mathbb{N}
$. Putting this into (\ref{8}), we have%
\begin{equation*}
h_{8k+5}=\alpha ^{24k^{2}+30k+9}\text{.}
\end{equation*}%
Hence we have $\alpha =\square $ iff $h_{8k+5}=\square $. The cases when $%
n\equiv 2,3,6~(8)$ can be proved in the similar way. So, $\alpha =\square $
iff $~h_{n}=\square $ for all $n\in 
\mathbb{N}
$.

For (\textit{ii}), we have also seen that if $n\equiv 1,3,5$ or $7~(8)$,$~$%
then $h_{n}=C$ for all $\alpha \in 
\mathbb{Z}
\backslash \{0\}$ in the previous theorem. So we must consider the cases $%
n\equiv 2$ or $6~(8)$. Then we have%
\begin{equation*}
h_{8k+2}=-\alpha ^{24k^{2}+12k+1}\text{ and }h_{8k+6}=\alpha
^{24k^{2}+36k+13}
\end{equation*}%
for $k\in 
\mathbb{N}
$, respectively, by (\ref{8}). Hence we get $\alpha =C$ iff $h_{8k+2}=C$ or $%
\alpha =C$ iff $h_{8k+6}=C$. Therefore $\alpha =C$ iff $~h_{n}=C$ for all $%
n\in 
\mathbb{N}
$.
\end{proof}

We can summarize the results which obtained above in the following table.%
\begin{equation*}
\begin{tabular}{||c|c|c|c|c|c|c|c|c||}
\hline\hline
$N=4$ & $h_{8k}$ & $h_{8k+1}$ & $h_{8k+2}$ & $h_{8k+3}$ & $h_{8k+4}$ & $%
h_{8k+5}$ & $h_{8k+6}$ & $h_{8k+7}$ \\ \hline
$\alpha \in 
\mathbb{Z}
\backslash \{0\}$ & $0$ & $\square $ &  &  & $0$ &  &  & $\square $ \\ \hline
$\alpha =\square $ & $0$ & $\square $ & $\square $ & $\square $ & $0$ & $%
\square $ & $\square $ & $\square $ \\ \hline
$\alpha \in 
\mathbb{Z}
\backslash \{0\}$ & $0$ & $C$ &  & $C$ & $0$ & $C$ &  & $C$ \\ \hline
$\alpha =C$ & $0$ & $C$ & $C$ & $C$ & $0$ & $C$ & $C$ & $C$ \\ \hline\hline
\end{tabular}%
\end{equation*}

\subsection{The Case $N=5$}

An easy calculation as in Theorem 5.1 and Theorem 5.2 gives the following
theorems.

\begin{theorem}
Let $(h_{n})$ be an elliptic divisibility sequence for which the fifth term
is zero. \newline
i. If $n\equiv 1,4,6,9~(10)$,$~$then $h_{n}=\square $ for all $\alpha \in 
\mathbb{Z}
\backslash \{0\}$. \newline
ii. If $n\equiv 1,3,4,11,12,14~(15)$,$~$then $h_{n}=C$ for all $\alpha \in 
\mathbb{Z}
\backslash \{0\}$.
\end{theorem}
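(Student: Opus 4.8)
The plan is to imitate the proof of Theorem 5.1, reading everything off the closed form of the general term for rank five. By part 2 of Theorem 3.2 (equation (\ref{81})) we have $h_n = \varepsilon\,\alpha^{(2n^2-p)/5}$, where $\varepsilon\in\{\pm 1\}$ and $p\in\{2,3\}$ are the piecewise quantities recorded there, and where $p$ is defined precisely so that the exponent $e:=(2n^2-p)/5$ is always an integer. The key simplification is the paper's convention that $\square=\pm\beta^2$ and that $C$ is a cube of a nonzero rational: since $-1=(-1)^3$ and the sign is already built into $\square$, the unit $\varepsilon$ never obstructs either property, and both parts reduce to the arithmetic of the single exponent $e$. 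I would also note at the outset that the relevant residues avoid $n\equiv 0\ (5)$, so that $h_n\neq 0$ and the assertions are meaningful.

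For part (i) I would show that $e$ is even exactly on the stated classes. Since $5$ is odd, reducing $5e=2n^2-p$ modulo $2$ gives $e\equiv p\ (\mathrm{mod}\ 2)$, so $e$ is even iff $p=2$, i.e. iff $n\equiv 1,4\ (5)$, which is the same as $n\equiv 1,4,6,9\ (10)$. For such $n$ one writes $e=2m$ and obtains $h_n=\varepsilon(\alpha^m)^2=\square$ for every $\alpha\in\mathbb{Z}\setminus\{0\}$, independently of $\alpha$. Matching the style of Theorem 5.1, one may instead substitute $n=10k+r$ for each $r\in\{1,4,6,9\}$ and read off an explicitly even exponent; the modular shortcut simply makes transparent why these four classes are the right ones.

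For part (ii) the relevant modulus is $15$ rather than $10$, because cubeness of $h_n$ is governed by divisibility of $e$ by $3$, and deciding this requires knowing $n$ both modulo $5$ (which fixes $p$) and modulo $3$. Reducing $5e=2n^2-p$ modulo $3$ and using $5\equiv 2$ and $5^{-1}\equiv 2\ (\mathrm{mod}\ 3)$ yields $e\equiv n^2+p\ (\mathrm{mod}\ 3)$. Hence $3\mid e$ iff either $n\equiv 0\ (3)$ together with $p=3$ (that is $n\equiv 2,3\ (5)$), or $n\not\equiv 0\ (3)$ together with $p=2$ (that is $n\equiv 1,4\ (5)$). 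Solving these four congruence pairs by the Chinese Remainder Theorem returns exactly the six classes $n\equiv 1,3,4,11,12,14\ (15)$, and on each of them $e=3m$ gives $h_n=\varepsilon(\alpha^m)^3=(\pm\alpha^m)^3=C$ for every $\alpha\in\mathbb{Z}\setminus\{0\}$.

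There is no genuine obstacle: once the general term of Theorem 3.2 is available, both parts are elementary congruence bookkeeping, which is why the text disposes of them as ``an easy calculation as in Theorem 5.1 and Theorem 5.2.'' The only points that demand care are confirming that $e$ is truly an integer (guaranteed by the definition of $p$) and correctly identifying $15$, not $10$, as the modulus controlling the cube statement; the identity $e\equiv n^2+p\ (\mathrm{mod}\ 3)$ makes the appearance of the six residue classes transparent and guards against an off-by-one slip when matching the stated list.
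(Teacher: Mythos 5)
Your proposal is correct and is exactly the calculation the paper intends: it omits a written proof of this theorem, saying only that "an easy calculation as in Theorem 5.1 and Theorem 5.2" gives it, and your substitution into the rank-five general term $h_n=\varepsilon\alpha^{(2n^2-p)/5}$ of Theorem 3.2, checking parity (resp.\ divisibility by $3$) of the exponent, is that calculation. The congruence shortcut $e\equiv p\ (2)$ and $e\equiv n^2+p\ (3)$ is a clean way to recover the stated residue classes mod $10$ and mod $15$, and your verification that the sign $\varepsilon$ is harmless under the paper's conventions $\square=\pm\beta^2$ and $-1=(-1)^3$ is the right observation.
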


\begin{theorem}
Let $(h_{n})$ be an elliptic divisibility sequence for which the fifth term
is zero.\newline
i. $h_{n}=\square $ for all $n\in 
\mathbb{N}
$ iff $\alpha =\square $. \newline
ii. $h_{n}=C$ for all $n\in 
\mathbb{N}
$ iff $\alpha =C$.
\end{theorem}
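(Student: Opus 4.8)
The plan is to mirror the proof of Theorem 5.2, relying on the explicit general term (\ref{81}) for $N=5$, namely $h_{n}=\varepsilon\,\alpha^{\{(2n^{2}-p)/5\}}$, together with the residue classes already settled in Theorem 5.3. Throughout, I would keep in mind that the nonzero terms are exactly those with $5\nmid n$ (the terms with $5\mid n$ vanish and are excluded from the statement, since in the convention $\square=\pm\beta^{2}$, $C=\pm\gamma^{3}$ a square or cube is a \emph{nonzero} rational), and that the sign $\varepsilon=\pm1$ is irrelevant for the cube question (because $-1=(-1)^{3}$) and is absorbed by the sign already present in the definition of $\square$.

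For the two ``if'' directions, both are immediate. If $\alpha=\square$, then for every exponent $e$ one has $\alpha^{e}=(\pm\beta^{2})^{e}=\pm(\beta^{e})^{2}=\square$, and multiplying by $\varepsilon=\pm1$ preserves this; hence every nonzero $h_{n}$ is a square. Likewise, if $\alpha=C$ then $\alpha^{e}=C$ for every $e$, and multiplication by $\varepsilon$ again does no harm, so every nonzero $h_{n}$ is a cube. This disposes of one implication in each of (i) and (ii).

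For the ``only if'' direction of (i), I would isolate residue classes modulo $10$ on which the value of $h_{n}$ forces $\alpha=\square$. By Theorem 5.3(i) the classes $n\equiv1,4,6,9~(10)$ yield squares for every $\alpha$ and so carry no information; the classes to exploit are the remaining nonzero ones, $n\equiv2,3,7,8~(10)$. Substituting $n=10k+2,\,10k+3,\,10k+7,\,10k+8$ into (\ref{81}) and simplifying, the exponent $(2n^{2}-p)/5$ turns out to be \emph{odd} in each of these four cases, so that $h_{n}=\varepsilon\,\alpha^{\mathrm{odd}}$; an odd power of $\alpha$ is a square only if $\alpha$ itself is, whence $h_{n}=\square$ forces $\alpha=\square$. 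Together with the ``if'' direction this gives (i).

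For (ii) the argument is structurally identical, but the relevant modulus is $15$ and the relevant invariant is the exponent modulo $3$. By Theorem 5.3(ii) the classes $n\equiv1,3,4,11,12,14~(15)$ are cubes for every $\alpha$, leaving the nonzero classes $n\equiv2,6,7,8,9,13~(15)$ to be used. Substituting representatives into (\ref{81}), one checks that the exponent $(2n^{2}-p)/5$ is $\not\equiv0~(3)$ in each case, so that $\alpha^{\{(2n^{2}-p)/5\}}=C$ forces $\alpha=C$, the sign $\varepsilon$ again being harmless; this yields (ii). The computations are entirely routine, and I expect the only point needing care to be the bookkeeping of the two moduli (the $10$ for squares and the $15$ for cubes reflect the interaction of the rank $5$ with the exponents $2$ and $3$) and the verification that the ``forcing'' classes together with those handled by Theorem 5.3 really exhaust all nonzero residues, so that no value of $n$ is left unaccounted for.
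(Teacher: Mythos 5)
Your proposal is correct and follows exactly the route the paper intends: the paper dispatches this theorem with the remark that ``an easy calculation as in Theorem 5.1 and Theorem 5.2'' suffices, and your argument is precisely that calculation carried out for $N=5$ (checking that the exponent $(2n^{2}-p)/5$ in (\ref{81}) is odd for $n\equiv 2,3,7,8~(10)$ and $\not\equiv 0~(3)$ for $n\equiv 2,6,7,8,9,13~(15)$, with the sign $\varepsilon$ absorbed by the conventions for $\square$ and $C$). No gaps; the bookkeeping of residue classes and the treatment of the vanishing terms $5\mid n$ are both handled correctly.
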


\subsection{The Case $N=6$}

This case is little more complicated than the other ones. We determine the
square or cube terms dependent on the any choice of $\alpha $ and we also
determine the square or cube terms dependent on the admissible choice of $%
\alpha $ in the following theorems. An easy calculation as in Theorem 5.1
and Theorem 5.2 gives the following theorems.

\begin{theorem}
Let $(h_{n})$ be an elliptic divisibility sequence for which the sixth term
is zero. \newline
i. If $n\equiv 1,5,7,11~(12)$,$~$then $h_{n}=\square $ for all $\alpha \in 
\mathbb{Z}
\backslash \{-1,0\}$. \newline
ii. If $n\equiv 1,3,9,15,17~(18)$,$~$then $h_{n}=C$ for all $\alpha \in 
\mathbb{Z}
\backslash \{-1,0\}$.
\end{theorem}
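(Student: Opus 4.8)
The plan is to read $h_n$ directly off the closed form for rank six established in Theorem 3.2, namely equation (\ref{n6}),
\begin{equation*}
h_n=\varepsilon\,\alpha^{\{(5n^2-p)/12\}}(\alpha+1)^{\{(n^2-k)/3\}},
\end{equation*}
and to show that in each prescribed residue class the two exponents are even (part i) or divisible by three (part ii). Since the convention in this paper is $\square=\pm\beta^2$, the sign $\varepsilon\in\{+1,-1\}$ plays no role for squares; and because $(-1)^3=-1$, the sign is itself a cube, so it plays no role for part ii either. Everything therefore reduces to an elementary divisibility check on the two integer exponents $a(n)=(5n^2-p)/12$ and $b(n)=(n^2-k)/3$. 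The excluded values $\alpha=0,-1$ are precisely those making a base vanish, which is why they are omitted.

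For part i I would first observe that the four residues $1,5,7,11\pmod{12}$ all satisfy $n\equiv\pm1\pmod6$, so by the definitions of $p$ and $k$ in Theorem 3.2 one has uniformly $p=5$ and $k=1$, giving
\begin{equation*}
a(n)=\frac{5n^2-5}{12},\qquad b(n)=\frac{n^2-1}{3}.
\end{equation*}
Writing $n=12m+r$ with $r\in\{1,5,7,11\}$ and expanding $n^2$, a short computation shows that $a(n)$ and $b(n)$ are even integers for each $r$; for example when $r=1$ one gets $a=60m^2+10m$ and $b=48m^2+8m$. Consequently $h_n=\varepsilon\bigl(\alpha^{a/2}(\alpha+1)^{b/2}\bigr)^2$ is of the form $\pm\beta^2$ with $\beta\in\mathbb{Z}$, so $h_n=\square$ for every $\alpha\in\mathbb{Z}\setminus\{-1,0\}$.

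For part ii the natural period is $18$ rather than $12$, and the five residues split into two groups according to $n\bmod6$: the residues $1,17\pmod{18}$ give $n\equiv\pm1\pmod6$ and hence $p=5,\ k=1$, while $3,9,15\pmod{18}$ give $n\equiv3\pmod6$ and hence $p=9,\ k=0$. Substituting $n=18m+r$ in each group and simplifying, one finds in every case that $3\mid a(n)$ and $3\mid b(n)$; for instance for $r=1$ one obtains $a=3(45m^2+5m)$ and $b=3(36m^2+4m)$, while for $r=3$ one obtains $a=3(45m^2+15m+1)$ and $b=3(36m^2+12m+1)$. Hence $h_n=\varepsilon\bigl(\alpha^{a/3}(\alpha+1)^{b/3}\bigr)^3$ is a cube for every admissible $\alpha$.

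The only point demanding care is the bookkeeping of which pair $(p,k)$ attaches to each residue: $p$ and $k$ are defined modulo $6$, whereas the square and cube patterns live modulo $12$ and $18$ respectively, so one must correctly sort the listed residues by their class mod $6$ before substituting. Once this is done, the argument is a routine expansion of $(12m+r)^2$ or $(18m+r)^2$ followed by reduction of the resulting exponents modulo $2$ or modulo $3$; there is no genuine analytic obstacle, as all the content is already packaged in the closed form (\ref{n6}).
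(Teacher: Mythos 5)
Your proposal is correct and follows essentially the same route as the paper, which proves this theorem by "an easy calculation as in Theorem 5.1 and Theorem 5.2": substitute $n=12m+r$ (resp.\ $n=18m+r$) into the closed form (\ref{n6}) and verify that both exponents are even (resp.\ divisible by $3$), with the sign absorbed by the convention $\square=\pm\beta^{2}$ and by $(-1)^{3}=-1$. Your bookkeeping of the pairs $(p,k)$ by residue mod $6$ and the sample expansions check out.
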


\begin{theorem}
Let $(h_{n})$ be an elliptic divisibility sequence for which the sixth term
is zero.\newline
i. If $n\equiv 4,8~(12)$, $\alpha +1=\square $ iff $h_{n}=\square $. \newline
ii. If $n\equiv 4,14~(18)$, $\alpha +1=C$ iff $h_{n}=C$.\newline
iii. If $n\equiv 8,10~(18)$, $\alpha =C$ iff $h_{n}=C$.
\end{theorem}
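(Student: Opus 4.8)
The plan is to read everything off the closed form for $h_n$ in case $N=6$, namely formula (\ref{n6}),
\[
h_n=\varepsilon\,\alpha^{(5n^2-p)/12}\,(\alpha+1)^{(n^2-k)/3},
\]
combined with the elementary observation used throughout Section~5 that $\alpha$ and $\alpha+1$ are consecutive integers, hence coprime. By this coprimality a product $\alpha^{A}(\alpha+1)^{B}$ is a perfect square (respectively a cube) if and only if each factor is. Writing $A=(5n^2-p)/12$ and $B=(n^2-k)/3$, I would reduce each claim to the residues of these exponents: $\alpha^{A}=\square$ is automatic when $A$ is even and is equivalent to $\alpha=\square$ when $A$ is odd, while $\alpha^{A}=C$ is automatic when $3\mid A$ and is equivalent to $\alpha=C$ otherwise, with the identical dichotomy for $(\alpha+1)^{B}$. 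The sign $\varepsilon=\pm1$ causes no trouble: for cubes it is itself a cube, and for squares it is absorbed by the convention $\square=\pm\beta^2$ adopted in this paper.

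For part (\textit{i}) I would substitute $n=12j+4$ and $n=12j+8$. In both cases $n\bmod 6\in\{2,4\}$, so the tables in Theorem~3.2 give $p=8$ and $k=1$; I then expand $A$ and $B$ as explicit quadratics in $j$ and check that $A$ is even while $B$ is odd in each case. Consequently $\alpha^{A}$ contributes a genuine square automatically, and $(\alpha+1)^{B}=(\alpha+1)\cdot\square$, which forces $h_n=\square\iff\alpha+1=\square$.

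For parts (\textit{ii}) and (\textit{iii}) the relevant period for cube behaviour is $18$, so I would instead substitute $n=18j+r$, taking $r\in\{4,14\}$ for (\textit{ii}) and $r\in\{8,10\}$ for (\textit{iii}), again reading off $p$ and $k$ from $n\bmod 6$. For (\textit{ii}) the computation should yield $A\equiv 0\pmod 3$ and $B\not\equiv 0\pmod 3$, so the $\alpha$-factor is automatically a cube and $h_n=C\iff(\alpha+1)^{B}=C\iff\alpha+1=C$. For (\textit{iii}) the roles reverse, giving $A\not\equiv 0\pmod 3$ and $B\equiv 0\pmod 3$, whence $h_n=C\iff\alpha=C$.

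The only real care is bookkeeping: one must pick $p,k$ from the correct residue of $n$ modulo $6$ (and $\varepsilon$ from $n$ modulo $12$), and verify that the divisions by $12$ and $3$ genuinely return integers before reducing them modulo $2$ or $3$. I expect no substantive obstacle beyond this arithmetic; the structural input---coprimality of $\alpha$ and $\alpha+1$ together with the fixed parity and mod-$3$ pattern of the exponents on each residue class---does all the work.
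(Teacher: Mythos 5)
Your proposal is correct and follows essentially the same route as the paper: the paper proves this theorem by ``an easy calculation as in Theorem 5.1 and Theorem 5.2,'' i.e.\ by substituting $n=12j+r$ (resp.\ $n=18j+r$) into formula (\ref{n6}), reading off the exponents of $\alpha$ and $\alpha+1$, and invoking the coprimality of the irreducible factors stated as the ``basic observation'' at the start of Section~5. Your exponent computations (with $p=8$, $k=1$ in every relevant residue class) check out, so there is nothing to add.
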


It is shown that the terms of the EDS for which the sixth term is zero can
not be a square or a cube for any choice of $\alpha $ in the following
theorem.

\begin{theorem}
Let $(h_{n})$ be an elliptic divisibility sequence for which the sixth term
is zero. \newline
i.$~$If $n\equiv 2,3,9,10~(12)$, then $h_{n}$ is not a square for all $%
\alpha \in 
\mathbb{Z}
\backslash \{-1,0\}$.\newline
ii. If $n\equiv 2,5,7,11,13,16~(18)$, then $h_{n}$ is not a cube for all $%
\alpha \in 
\mathbb{Z}
\backslash \{-1,0\}$.
\end{theorem}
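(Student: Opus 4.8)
The plan is to read the two coprime building blocks $\alpha$ and $\alpha+1$ directly off the general term (\ref{n6}) and then invoke the principle recorded in the note preceding this subsection: since the distinct irreducible factors of the right-hand side are pairwise coprime, if $h_n=\square$ (respectively $h_n=C$) then each factor is separately a $\square$ (respectively a $C$). Concretely, I would write the $N=6$ general term as $h_n=\varepsilon\,\alpha^{A}(\alpha+1)^{B}$ with $A=(5n^2-p)/12$ and $B=(n^2-k)/3$, where $p,k$ are the residue-dependent constants of Theorem 3.2(3) (determined by $n \bmod 6$). Because $\alpha$ and $\alpha+1$ are consecutive integers they satisfy $\gcd(\alpha,\alpha+1)=1$, and because $\varepsilon=\pm1$ is itself both a square and a cube it can be absorbed; hence $h_n=\square$ forces both $\alpha$ and $\alpha+1$ to be squares, while $h_n=C$ forces both to be cubes. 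The whole argument thus reduces to a parity (resp. a mod-$3$) computation of the exponents $A$ and $B$, together with the elementary fact that two consecutive integers cannot both be perfect squares, nor both perfect cubes, once $0$ and $\pm1$ are excluded.

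For part (i) I would substitute $n=12j+r$ for each $r\in\{2,3,9,10\}$, inserting $p=8$ for $r\equiv 2,10$ and $p=9$ for $r\equiv 3,9$ (with the matching $k$), and compute $A$ and $B$ as explicit quadratics in $j$. In all four classes this yields $A$ and $B$ \emph{both odd}. Consequently $\alpha^{A}$ is a square only if $\alpha$ is, and $(\alpha+1)^{B}$ only if $\alpha+1$ is, so that $h_n=\square$ would require the consecutive positive integers $|\alpha|$ and $|\alpha+1|$ to be simultaneously perfect squares. Since the equation $l^2-m^2=1$ forces $m=0,\,l=1$, the only consecutive squares are $0$ and $1$; as $\alpha\in\mathbb{Z}\setminus\{-1,0\}$ excludes this, $h_n$ cannot be a square.

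For part (ii) I would run the same reduction modulo $18$, putting $n=18j+r$ for $r\in\{2,5,7,11,13,16\}$ and computing $A\bmod 3$ and $B\bmod 3$. Here one first notes that $n^2\bmod 36$ depends only on $n\bmod 18$ (as $(n+18)^2\equiv n^2\pmod{36}$), so $A\bmod 3$ and $B\bmod 3$ are well defined on these classes; the computation then shows $A\not\equiv 0\ (3)$ and $B\not\equiv 0\ (3)$ in every case. Therefore $\alpha^{A}$ is a cube only if $\alpha$ is, and $(\alpha+1)^{B}$ only if $\alpha+1$ is, so $h_n=C$ would force the consecutive integers $\alpha$ and $\alpha+1$ to be simultaneously cubes. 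Solving $b^3-a^3=1$ gives $3a^2+3a=0$, hence $a\in\{-1,0\}$, i.e.\ $\alpha\in\{-1,0\}$, which is excluded; so $h_n$ is never a cube.

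I expect the genuinely delicate point, which I would isolate as a short lemma, to be the treatment of negative $\alpha$ under the paper's convention $\square=\pm\beta^2$: one must check that replacing $\alpha,\alpha+1$ by $|\alpha|,|\alpha+1|$ preserves coprimality and keeps them consecutive positive integers (both $\ge 1$), so that the "no consecutive squares'' obstruction still applies when $\alpha\le -2$, and correspondingly that for cubes the sign is harmless because $(-\gamma)^3$ is again a cube. The rest is routine but error-prone bookkeeping: correctly pairing each residue class with its constants $p,k$ through $n\bmod 6$, and verifying the parity (for squares) and the class mod $3$ (for cubes) of $A$ and $B$ in each of the listed cases.
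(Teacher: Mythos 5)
Your proposal is correct and follows essentially the same route as the paper: substitute $n=12k+r$ (resp. $n=18k+r$) into the general term (\ref{n6}), observe that both exponents are odd (resp. nonzero mod $3$), and use the coprimality of $\alpha$ and $\alpha+1$ to reduce to the nonexistence of consecutive squares (resp. to $\beta_2^3-\beta_1^3=1$, exactly the paper's trivial equation) for $\alpha\notin\{-1,0\}$. The only cosmetic difference is that in the square case the paper packages the final obstruction as the trivial equation $(2\alpha+1)^2\pm\beta^2=1$ rather than as the statement that two consecutive positive integers cannot both be perfect squares.
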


\begin{proof}
For (\textit{i}), if $n\equiv 2~(12)~$then $n=12k+2$ $(k\in 
\mathbb{N}
)$. Substituting this into (\ref{n6}), we have%
\begin{equation*}
h_{12k+2}=-\alpha ^{60k^{2}+20k+1}(\alpha +1)^{48k^{2}+16k+1}\text{\ .}
\end{equation*}%
Therefore we have $h_{n}=\square $ iff%
\begin{equation}
\alpha (\alpha +1)=\square .  \tag{1}  \label{1}
\end{equation}%
This last equation leads to trivial equation%
\begin{equation*}
(2\alpha +1)^{2}\pm \beta ^{2}=1
\end{equation*}%
where $\beta $ is an integer. It is clear that the solutions of these
equations do not provide desired $\alpha $. The cases where $n\equiv
3,9,10~(12)$ can be proved in the same way.

For (\textit{ii}), if $n\equiv 2~$or $16~(18)$ then $n=18k+2$ or $n=18k+16$ $%
(k\in 
\mathbb{N}
)$. Putting these into (\ref{n6}), we have%
\begin{equation*}
h_{18k+2}=-\alpha ^{135k^{2}+30k+1}(\alpha +1)^{108k^{2}+24k+1}\text{ }
\end{equation*}%
and 
\begin{equation*}
h_{18k+16}=\alpha ^{135k^{2}+240k+106}(\alpha +1)^{108k^{2}+192k+85}
\end{equation*}%
respectively. Thus $h_{n}=C$ iff 
\begin{equation}
\alpha (\alpha +1)=C.  \tag{2}  \label{2}
\end{equation}%
If $n\equiv 5~$or $13~(18)$ then we have%
\begin{equation*}
h_{18k+5}=\alpha ^{135k^{2}+75k+10}(\alpha +1)^{108k^{2}+60k+8}\text{ }
\end{equation*}%
and 
\begin{equation*}
h_{18k+13}=\alpha ^{135k^{2}+195k+70}(\alpha +1)^{108k^{2}+156k+56}
\end{equation*}%
respectively, by (\ref{n6}). From these equations we see that $h_{n}=C$ iff 
\begin{equation}
\alpha (\alpha +1)^{2}=C.  \tag{3}  \label{3}
\end{equation}%
Now suppose that $n\equiv 7~$or $11~(18)$ then we have%
\begin{equation*}
h_{18k+7}=\alpha ^{135k^{2}+105k+20}(\alpha +1)^{108k^{2}+84k+16}\text{ }
\end{equation*}%
and 
\begin{equation*}
h_{18k+11}=-\alpha ^{135k^{2}+165k+50}(\alpha +1)^{108k^{2}+132k+40}
\end{equation*}%
respectively, by (\ref{n6}). In this case we have $h_{n}=C$ iff 
\begin{equation}
\alpha ^{2}(\alpha +1)=C.  \tag{4}  \label{4}
\end{equation}%
It follows that equations $(2)$,$(3)$ and $(4)$ lead to trivial equation%
\begin{equation*}
\beta _{2}^{3}-\beta _{1}^{3}=1
\end{equation*}%
where $\alpha =\beta _{1}^{3}$, $\alpha +1=\beta _{2}^{3}$ and $\beta _{1}$, 
$\beta _{2}$ are integers. Solutions of this equation do not provide desired 
$\alpha $.
\end{proof}

\subsection{The Case $N=7$}

An easy calculation as in Theorem 5.1 and Theorem 5.2 gives the following
theorems.

\begin{theorem}
Let $(h_{n})$ be an elliptic divisibility sequence for which the seventh
term is zero. \newline
i. If $n\equiv 1,13~(14)$,$~$then $h_{n}=\square $ for all $\alpha \in 
\mathbb{Z}
\backslash \{0,1\}$.\newline
ii. If $n\equiv 1,3,8,13,18,20~(21)$,$~$then $h_{n}=C$ for all $\alpha \in 
\mathbb{Z}
\backslash \{0,1\}$.\newline
\end{theorem}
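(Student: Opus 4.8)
The plan is to imitate the proofs of Theorems 5.1 and 5.2, reading everything off the closed form (\ref{n7}) of Theorem 3.2, namely $h_{n}=\varepsilon\,\alpha^{(5n^{2}-p)/7}(\alpha-1)^{(3n^{2}-q)/7}$, in which $\varepsilon,p,q$ are the functions of $n\bmod 7$ tabulated there. The governing observation is that two different moduli control the two parts. Since $\varepsilon=\pm1$ and, under the paper's convention $\square=\pm\beta^{2}$, any $\pm\gamma^{2}$ counts as a square, a term $h_{n}$ is a square exactly when both exponents $(5n^{2}-p)/7$ and $(3n^{2}-q)/7$ are even; the parity of these exponents is governed by $n\bmod 2$ while $p,q$ are governed by $n\bmod 7$, so the relevant modulus is $\operatorname{lcm}(2,7)=14$. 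Likewise $\varepsilon=\pm1$ is always a cube (as $-1=(-1)^{3}$), so $h_{n}$ is a cube exactly when both exponents are divisible by $3$; here divisibility by $3$ depends on $n\bmod 3$, giving the modulus $\operatorname{lcm}(3,7)=21$. This is precisely why part (i) is phrased modulo $14$ and part (ii) modulo $21$.

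For part (i) I would treat the two residues separately. Writing $n=14k+1$, one has $n\equiv 1\pmod 7$, so $\varepsilon=+1$, $p=5$, $q=3$, and substitution into (\ref{n7}) gives exponents $5(n^{2}-1)/7=140k^{2}+20k$ and $3(n^{2}-1)/7=84k^{2}+12k$, both even; hence $h_{14k+1}=\alpha^{140k^{2}+20k}(\alpha-1)^{84k^{2}+12k}=\square$. Writing $n=14k+13$, one has $n\equiv 6\pmod 7$, again $p=5$, $q=3$ (now with $\varepsilon=-1$), and the same computation yields the even exponents $140k^{2}+260k+120$ and $84k^{2}+156k+72$. Since $\square=\pm\beta^{2}$, the sign $\varepsilon=-1$ is harmless, so $h_{14k+13}=\square$ for all $\alpha\in\mathbb{Z}\setminus\{0,1\}$ as well.

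For part (ii) I would run the identical substitution over the six residues $n\equiv 1,3,8,13,18,20\pmod{21}$. For each, reducing the residue modulo $7$ fixes $\varepsilon,p,q$, and writing $n=21k+r$ one checks that both $(5n^{2}-p)/7$ and $(3n^{2}-q)/7$ are multiples of $3$; for instance $n=21k+1$ gives exponents $315k^{2}+30k$ and $189k^{2}+18k$, while $n=21k+3$ (so $n\equiv 3\pmod 7$, $p=3$, $q=6$, $\varepsilon=-1$) gives $315k^{2}+90k+6$ and $189k^{2}+54k+3$. Because $\varepsilon=\pm1$ is a cube and each exponent is divisible by $3$, every such $h_{n}$ equals $C$. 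The six listed residues are exactly the lifts to $\bmod 21$ at which both exponents become $\equiv 0\pmod 3$.

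I do not expect a genuine obstacle here: once (\ref{n7}) is in hand the argument is entirely mechanical, and the only real risk is bookkeeping. One must select the correct triple $(\varepsilon,p,q)$ for each residue class and carry the arithmetic through without error, and should double-check that no listed residue secretly produces an odd exponent (for (i)) or an exponent $\not\equiv 0\pmod 3$ (for (ii)). Two safeguards reduce the labor: first, verifying each claim at $k=0$ against a directly computed initial segment of the sequence; and second, exploiting the symmetry $n\mapsto M-n$ (with $M=14$ or $21$), which leaves $n^{2}\bmod M$ and the values of $p,q$ unchanged, so that the residue $j$ and $M-j$ yield identical exponent-parity (resp.\ exponent-mod-$3$) conditions. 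This pairs the classes as $\{1,13\}$ for part (i) and $\{1,20\},\{3,18\},\{8,13\}$ for part (ii), halving the number of independent computations.
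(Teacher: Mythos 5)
Your proposal is correct and is exactly the argument the paper intends: for this theorem the paper only writes ``An easy calculation as in Theorem 5.1 and Theorem 5.2 gives the following theorems,'' and your substitution of $n=14k+r$ (resp.\ $n=21k+r$) into the closed form (\ref{n7}), with the tabulated $(\varepsilon,p,q)$, checking that both exponents are even (resp.\ divisible by $3$) under the convention $\square=\pm\beta^{2}$, is precisely that calculation. Your exponent computations check out, and the $n\mapsto M-n$ symmetry observation is a harmless labor-saving refinement.
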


\begin{theorem}
Let $(h_{n})$ be an elliptic divisibility sequence for which the seventh
term is zero. \newline
i. If $n\equiv 2,3,11,12~(14)$,$~$then $h_{n}=\square $ iff $\alpha
-1=\square $.\newline
ii. If $n\equiv 4,5,9,10~(14)$,$~$then $h_{n}=\square $ iff $\alpha =\square 
$.\newline
iii. If $n\equiv 4,6,10,11,15,17~(21)$,$~$then $h_{n}=C$ iff $\alpha =C$.%
\newline
iv. If $n\equiv 9,12~(21)$,$~$then $h_{n}=C$ iff $\alpha -1=C$.
\end{theorem}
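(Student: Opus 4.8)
The plan is to read everything off the explicit general term for $N=7$, namely formula (\ref{n7}),
\[
h_{n}=\varepsilon\,\alpha^{(5n^{2}-p)/7}\,(\alpha-1)^{(3n^{2}-q)/7},
\]
and to reduce each assertion to the parity (for squares) or the residue modulo $3$ (for cubes) of the two exponents $A=(5n^{2}-p)/7$ and $B=(3n^{2}-q)/7$. The crucial structural fact is that $\alpha$ and $\alpha-1$ are coprime, so that a product $\alpha^{A}(\alpha-1)^{B}$ is a square (respectively a cube) if and only if each of the factors $\alpha^{A}$ and $(\alpha-1)^{B}$ is; this follows by comparing $p$-adic valuations, since for every prime at most one of $\alpha$, $\alpha-1$ is divisible by it. In turn, $\alpha^{A}=\square$ holds automatically when $A$ is even, while $\alpha^{A}=\square$ forces $\alpha=\square$ when $A$ is odd; likewise $\alpha^{A}=C$ holds automatically when $3\mid A$, while $\alpha^{A}=C$ forces $\alpha=C$ when $3\nmid A$ (here one uses that $\alpha^{2}=C$ already implies $\alpha=C$, as $\gcd(2,3)=1$). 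The same dichotomies hold with $\alpha-1$ in place of $\alpha$. Since $\square=\pm\beta^{2}$ in the convention of this paper and $-1$ is itself a cube, the sign $\varepsilon$ plays no role and may be ignored throughout.

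For parts (\textit{i}) and (\textit{ii}) I substitute $n=14k+r$; the factor $7$ in the modulus fixes the pair $(p,q)$ attached to $n\bmod 7$ via Theorem 3.2, while the even modulus pins down the parities of $A$ and $B$, which depend only on $k\bmod 2$, that is, on $n\bmod 14$. Expanding $(5n^{2}-p)/7$ and $(3n^{2}-q)/7$ for each $r\in\{2,3,11,12\}$ shows that $A$ is even and $B$ is odd in every one of these four classes, whence $\alpha^{A}$ is always a square and $(\alpha-1)^{B}=\square$ forces $\alpha-1=\square$; this is part (\textit{i}). For $r\in\{4,5,9,10\}$ the roles are exactly reversed, $A$ odd and $B$ even, so $\alpha^{A}=\square$ carries the condition and $(\alpha-1)^{B}$ is automatically a square, giving $h_{n}=\square$ if and only if $\alpha=\square$, which is part (\textit{ii}).

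For the cube assertions (\textit{iii}) and (\textit{iv}) the relevant invariant is the class of $A$ and $B$ modulo $3$, which depends on $n\bmod 21$; hence I substitute $n=21k+r$ and compute $A\bmod 3$ and $B\bmod 3$. For each $r\in\{4,6,10,11,15,17\}$ one finds $3\mid B$ while $3\nmid A$ (the value of $A\bmod 3$ being $1$ or $2$ according to the underlying residue of $n$ modulo $7$), so $(\alpha-1)^{B}$ is always a cube and $\alpha^{A}=C$ is equivalent to $\alpha=C$; this is part (\textit{iii}). For $r\in\{9,12\}$ one finds the complementary situation $3\mid A$ and $B\equiv 1\pmod 3$, so $\alpha^{A}$ is always a cube and $h_{n}=C$ if and only if $\alpha-1=C$, which is part (\textit{iv}).

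The per-case computations are the routine expansion of the two exponent formulas after inserting $n=14k+r$ or $n=21k+r$; the only point to watch is that the correct pair $(p,q)$ from Theorem 3.2 is used for the residue of $n$ modulo $7$ underlying each $r$. The real content — and the reason this theorem is far cleaner than Theorem 5.7 — is that here exactly one of the two exponents is forced into the parity (or residue) that renders its base irrelevant, while the other carries the entire condition; the two bases $\alpha$ and $\alpha-1$ are never simultaneously constrained, so no genuine Diophantine equation such as $\alpha(\alpha-1)=\square$ arises and each case collapses to one of the four clean criteria. Consequently the main, and essentially only, obstacle is the bookkeeping needed to verify that the eight residues modulo $14$ in (\textit{i})--(\textit{ii}) and the eight residues modulo $21$ in (\textit{iii})--(\textit{iv}) distribute into the listed families with the uniform parity and residue behaviour claimed above.
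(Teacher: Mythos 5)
Your proposal is correct and follows essentially the same route as the paper, which omits the details and simply invokes ``an easy calculation as in Theorems 5.1 and 5.2'': substitute $n=14k+r$ (resp.\ $n=21k+r$) into formula (\ref{n7}), use the pairwise coprimality of $\alpha$ and $\alpha-1$ (the paper's ``basic observation'') to split the square/cube condition factor by factor, and read off the parity (resp.\ residue mod $3$) of each exponent. Your exponent computations check out in all sixteen residue classes, so nothing further is needed.
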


\begin{theorem}
Let $(h_{n})$ be an elliptic divisibility sequence for which the seventh
term is zero.\newline
i. If $n\equiv 6,8~(14)$ then $h_{n}$ is not a square for all $\alpha \in 
\mathbb{Z}
\backslash \{0,1\}$.\newline
ii. If $n\equiv 2,5,16,19~(21)$ then $h_{n}$ is not a cube for all $\alpha
\in 
\mathbb{Z}
\backslash \{0,1\}$.
\end{theorem}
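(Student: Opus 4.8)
The plan is to follow verbatim the strategy used for the corresponding non-square and non-cube statements in the case $N=6$: substitute the relevant residue classes of $n$ into the closed form \eqref{n7} for the general term, read off the exponents of the two coprime bases $\alpha$ and $\alpha-1$, and then exploit the fact recorded at the start of this section that, $\alpha$ and $\alpha-1$ being consecutive integers, they are relatively prime, so every distinct irreducible factor of $h_{n}$ must separately be a square (respectively a cube) if $h_{n}$ is. The whole argument therefore reduces to extracting, class by class, the parity (for squares) and the residue modulo $3$ (for cubes) of the two exponents in \eqref{n7}, and then checking that the resulting condition on $\alpha$ has no admissible solution.

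For part (i) I would write $n=14k+6$ and $n=14k+8$ with $k\in\mathbb{N}$. Both residues satisfy $n\equiv 1$ or $6\pmod 7$, so \eqref{n7} uses $p=5$, $q=3$, and a direct computation yields
\begin{equation*}
h_{14k+6}=-\alpha^{140k^{2}+120k+25}(\alpha-1)^{84k^{2}+72k+15},\qquad h_{14k+8}=\alpha^{140k^{2}+160k+45}(\alpha-1)^{84k^{2}+96k+27}.
\end{equation*}
In both rows every exponent is odd, so $h_{n}=\square$ holds if and only if the product of the odd-power parts is a square, i.e.\ $\alpha(\alpha-1)=\square$, which is equation (5) of the solution table. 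Completing the square turns this into $(2\alpha-1)^{2}-\beta^{2}=1$ or $(2\alpha-1)^{2}+\beta^{2}=1$, both of which (as the table records) force $\alpha\in\{0,1\}$. Since these values are excluded, $h_{n}$ is never a square in these classes.

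For part (ii) I would instead work modulo $21$, substituting $n=21k+2,\,21k+5,\,21k+16,\,21k+19$; all four reduce to $n\equiv 2$ or $5\pmod 7$, so \eqref{n7} uses $p=6$, $q=5$. The exponents then come out $\equiv 2\pmod 3$ on $\alpha$ and $\equiv 1\pmod 3$ on $\alpha-1$; for instance
\begin{equation*}
h_{21k+2}=-\alpha^{315k^{2}+60k+2}(\alpha-1)^{189k^{2}+36k+1}.
\end{equation*}
Reducing the exponents modulo $3$ shows $h_{n}=C$ if and only if $\alpha^{2}(\alpha-1)=C$; since $\alpha$ and $\alpha-1$ are coprime this splits as $\alpha^{2}=C$ and $\alpha-1=C$, and $\alpha^{2}=C$ is equivalent to $\alpha=C$. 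Writing $\alpha=\beta_{1}^{3}$ and $\alpha-1=\beta_{2}^{3}$ then lands on the consecutive-cube equation $\beta_{1}^{3}-\beta_{2}^{3}=1$ listed in the solution table, whose only integer solutions again give $\alpha\in\{0,1\}$, so $h_{n}$ is never a cube in these classes.

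The only place where care is needed is the arithmetic bookkeeping: one must track the piecewise definitions of $p$ and $q$ in \eqref{n7} across each residue class, verify the divisibility by $7$ that makes the exponents integral, and then correctly read off the parity (square case) or the residue modulo $3$ (cube case) of each exponent. There is no genuine Diophantine obstacle, since once the problem is reduced to $\alpha(\alpha-1)=\square$ and to $\beta_{1}^{3}-\beta_{2}^{3}=1$, the non-existence of an admissible $\alpha$ is immediate from the elementary resolutions already given in the solution table and its footnotes.
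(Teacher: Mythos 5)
Your proof is correct and follows essentially the same route as the paper: substitute $n=14k+6,\,14k+8$ and $n=21k+2,\,21k+5,\,21k+16,\,21k+19$ into \eqref{n7}, reduce via coprimality of $\alpha$ and $\alpha-1$ to $\alpha(\alpha-1)=\square$ and $\alpha^{2}(\alpha-1)=C$, and dispose of these by the trivial equations $(2\alpha-1)^{2}\pm\beta^{2}=1$ and $\beta_{1}^{3}-\beta_{2}^{3}=1$. (Your exponent $140k^{2}+120k+25$ for the $\alpha$-power of $h_{14k+6}$ is in fact the value \eqref{n7} gives, whereas the paper prints $140k^{2}+120k+1$; both are odd, so nothing changes.)
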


\begin{proof}
For (\textit{i}), if $n\equiv 6~$or $8~(14)$ then we have%
\begin{equation*}
h_{14k+6}=-\alpha ^{140k^{2}+120k+1}(\alpha -1)^{84k^{2}+72k+15}\text{ }
\end{equation*}%
and 
\begin{equation*}
h_{14k+8}=\alpha ^{140k^{2}+160k+45}(\alpha -1)^{84k^{2}+96k+27}
\end{equation*}%
respectively, by (\ref{n7}). From these equations we see that $h_{n}=\square 
$ iff 
\begin{equation}
\alpha (\alpha -1)=\square .  \tag{5}
\end{equation}%
This last equation leads to trivial equations%
\begin{equation*}
(2\alpha -1)^{2}\pm \beta ^{2}=1
\end{equation*}%
where $\beta $ is an integer. It is clear that the solutions of these
equations do not provide desired $\alpha $.

For (\textit{ii}), if $n\equiv 2,5,16$ or $19~(21)$ then we have%
\begin{eqnarray*}
h_{21k+2} &=&-\alpha ^{315k^{2}+60k+2}(\alpha -1)^{189k^{2}+36k+1} \\
h_{21k+5} &=&\alpha ^{315k^{2}+150k+17}(\alpha -1)^{189k^{2}+90k+10} \\
h_{21k+16} &=&-\alpha ^{315k^{2}+480k+182}(\alpha -1)^{189k^{2}+288k+109} \\
h_{21k+19} &=&\alpha ^{315k^{2}+570k+257}(\alpha -1)^{189k^{2}+342k+154}%
\text{\ }
\end{eqnarray*}%
respectively, by (\ref{n7}). Therefore, $h_{n}=C$ iff%
\begin{equation}
\alpha ^{2}(\alpha -1)=C.  \tag{6}
\end{equation}%
This last equation leads to trivial equation%
\begin{equation*}
\beta _{1}^{3}-\beta _{2}^{3}=1
\end{equation*}%
where $\alpha =\beta _{1}^{3}$, $\alpha -1=\beta _{2}^{3}$ \ and $\beta _{1}$%
, $\beta _{2}$ are integers and solutions of this equation do not provide
desired $\alpha $.
\end{proof}

\subsection{The Case $N=8$.}

We determine square or cube terms dependent on the any choice of $\alpha $
and independent of the admissible choice of $\alpha $ in the following
theorem.

\begin{theorem}
Let $(h_{n})$ be an elliptic divisibility sequence for which the eighth term
is zero. \newline
i. If $n\equiv 1,4,12,15~(16)$,$~$then $h_{n}=\square $ for all $\alpha \in 
\mathbb{Z}
\backslash \{0,1\}$. \newline
ii. If $n\equiv 3,13~(16)$,$~$then $h_{n}=\square $ iff $(\alpha -1)(2\alpha
-1)=\square $.\newline
iii. If $n\equiv 5,11~(16)$,$~$then $h_{n}=\square $ iff $\alpha (2\alpha
-1)=\square $.
\end{theorem}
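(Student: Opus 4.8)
The plan is to feed each relevant residue class directly into the general-term formula (\ref{n8}) for $N=8$ and, for each, read off the parities of the three exponents attached to $\alpha$, $\alpha-1$ and $2\alpha-1$. The key observation that makes this work is the convention, fixed at the start of Section~5, that $\square=\pm\beta^{2}$; consequently the sign $\varepsilon$ in (\ref{n8}) never affects whether a term is a square, and only the parities of the exponents matter. Concretely, I write $n=16k+r$ for the residue $r$ in question, substitute the values of $p$, $q$ and $k$ from Theorem~3.2 (these depend only on $n$ modulo $8$), and obtain each exponent as an explicit quadratic in $k$ whose parity is constant in $k$.

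For part (\emph{i}) the claim is that the residues $n\equiv 1,4,12,15~(16)$ force all three exponents to be even. For example, with $n=16k+1$ one has $n\equiv 1~(8)$, so $p=15$, $q=7$, $k=3$, and the exponents become $15(n^{2}-1)/16=240k^{2}+30k$ for $\alpha$, $\,7(n^{2}-1)/16=112k^{2}+14k$ for $\alpha-1$, and $3(n^{2}-1)/8=96k^{2}+12k$ for $2\alpha-1$, all even; the residue $n\equiv 15$ is identical, and $n\equiv 4,12$ fall in the class $n\equiv 4~(8)$ with $p=q=16$, $k=0$, again giving even exponents. Since every exponent is even, $h_{n}=\pm\bigl[\alpha^{a}(\alpha-1)^{b}(2\alpha-1)^{c}\bigr]^{2}=\square$ for all admissible $\alpha$ (note that for $n\equiv 12,15$ the sign is $\varepsilon=-1$, which is harmless under the convention $\square=\pm\beta^{2}$).

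Parts (\emph{ii}) and (\emph{iii}) are the two ways to leave exactly two odd exponents behind. For $n\equiv 3,13~(16)$ I expect the exponent of $\alpha$ to be even while those of $\alpha-1$ and $2\alpha-1$ are both odd: taking $n=16k+3$ (so $p=7$, $q=15$, $k=3$) gives $240k^{2}+90k+8$, $\,112k^{2}+42k+3$ and $96k^{2}+36k+3$, whence $h_{n}=\pm\alpha^{2a}(\alpha-1)^{2b+1}(2\alpha-1)^{2c+1}$ and therefore $h_{n}=\square$ if and only if the residual factor $(\alpha-1)(2\alpha-1)$ is a square; the residue $n\equiv 13$ yields the same pattern. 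Symmetrically, for $n\equiv 5,11~(16)$ the exponent of $\alpha-1$ comes out even and those of $\alpha$ and $2\alpha-1$ odd (e.g.\ $n=16k+5$ gives $240k^{2}+150k+23$, $\,112k^{2}+70k+10$, $96k^{2}+60k+9$), reducing $h_{n}=\square$ to $\alpha(2\alpha-1)=\square$.

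There is no genuine arithmetic obstacle here: the entire content is the parity bookkeeping of the quadratic exponents. The one point demanding care is that $p$, $q$, $k$ in (\ref{n8}) are tabulated modulo $8$ whereas the statement's residues are modulo $16$, so before extracting $p,q,k$ I must correctly sort each mod-$16$ representative into its mod-$8$ class. Once that is done the parities are forced and independent of $k$, and the three parts follow by simply noting which of $\alpha$, $\alpha-1$, $2\alpha-1$ survives with an odd exponent. The residual conditions $(\alpha-1)(2\alpha-1)=\square$ and $\alpha(2\alpha-1)=\square$ are precisely equations $7$ and $8$ of the table, whose solvability is settled there by Pell and trivial equations; hence this theorem only needs to isolate them, not solve them.
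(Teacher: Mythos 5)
Your proposal is correct and follows essentially the same route as the paper's proof: substitute $n=16k+r$ into (\ref{n8}), verify that the exponent parities are constant in $k$, and reduce $h_{n}=\square$ to the product of the factors left with odd exponents. The only difference is that the paper's proof goes on to analyze the resulting conditions $(\alpha -1)(2\alpha -1)=\square$ and $\alpha (2\alpha -1)=\square$ via Pell and trivial equations, which, as you note, is not required for the stated equivalences.
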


\begin{proof}
For (\textit{i}), if $n\equiv 1,4,12$ or $15~(16)$,$~$then $%
n=16k+1,n=16k+4,n=16k+12,$ or $16k+15$ $(k\in 
\mathbb{N}
)$. Substituting these values into (\ref{n8}), we have%
\begin{equation*}
h_{16k+1}=\alpha ^{240k^{2}+30k}(\alpha -1)^{112k^{2}+14k}(2\alpha
-1)^{96k^{2}+12k},
\end{equation*}%
\begin{equation*}
h_{16k+4}=\alpha ^{240k^{2}+120k+4}(\alpha -1)^{112k^{2}+56k+6}(2\alpha
-1)^{96k^{2}+48k+6},
\end{equation*}%
\begin{equation*}
h_{16k+12}=-\alpha ^{240k^{2}+360k+134}(\alpha
-1)^{112k^{2}+168k+62}(2\alpha -1)^{96k^{2}+144k+54},
\end{equation*}%
and%
\begin{equation*}
\text{ }h_{16k+15}=-\alpha ^{240k^{2}+450k+134}(\alpha
-1)^{112k^{2}+210k+98}(2\alpha -1)^{96k^{2}+180k+84},
\end{equation*}%
respectively. Hence, $h_{n}=\square $ for all $\alpha \in 
\mathbb{Z}
\backslash \{0,1\}$.

For (\textit{ii}), if $n\equiv 3,13~(16)$ then we have%
\begin{equation*}
h_{16k+3}=-\alpha ^{240k^{2}+90k+8}(\alpha -1)^{112k^{2}+42k+3}(2\alpha
-1)^{96k^{2}+36k+3}\text{\ }
\end{equation*}%
and%
\begin{equation*}
\text{ \ }h_{16k+13}=\alpha ^{240k^{2}+390k+158}(\alpha
-1)^{112k^{2}+182k+73}(2\alpha -1)^{96k^{2}+156k+63}\text{\ },
\end{equation*}%
respectively by (\ref{n8}). Hence, $h_{n}=\square $ iff 
\begin{equation}
(\alpha -1)(2\alpha -1)=\square .  \tag{7}
\end{equation}%
It follows that 
\begin{equation}
(4\alpha -3)^{2}-8\beta ^{2}=1  \label{700}
\end{equation}%
or 
\begin{equation*}
(4\alpha -3)^{2}+8\beta ^{2}=1
\end{equation*}%
where $\beta $ is an integer. From the last equation we have no solutions of 
$\alpha $. The first equation leads to Pell equation. If we rewrite this
equation as%
\begin{equation*}
(\tau +2\beta \sqrt{2})(\tau -2\beta \sqrt{2})=1
\end{equation*}%
where $\tau =4\alpha -3$ we see that the only solutions of the form $\tau
_{k}+3\equiv 0$ $(4)$ give the desired solution of $\alpha $ and their
number is infinite.

For (\textit{iii}), if $n\equiv 5$ or $11~(16)$ then we have%
\begin{equation*}
h_{16k+5}=\alpha ^{240k^{2}+150k+23}(\alpha -1)^{112k^{2}+70k+10}(2\alpha
-1)^{96k^{2}+60k+9}\text{\ }
\end{equation*}%
and%
\begin{equation*}
\text{ \ }h_{16k+11}=-\alpha ^{240k^{2}+330k+113}(\alpha
-1)^{112k^{2}+154k+52}(2\alpha -1)^{96k^{2}+132k+45}\text{\ },
\end{equation*}%
respectively, by (\ref{n8}). Hence, $h_{n}=\square $ iff 
\begin{equation}
\alpha (2\alpha -1)=\square .  \tag{8}
\end{equation}%
It follows that $(\alpha ,2\alpha -1)=(\beta _{1}^{2},\beta
_{2}^{2}),(-\beta _{1}^{2},-\beta _{2}^{2}),(-\beta _{1}^{2},\beta _{2}^{2})$
or $(\beta _{1}^{2},-\beta _{2}^{2}),$ where $\beta _{1},\beta _{2}$ are
positive integers. The latter two possibilities give the trivial equations $%
2\beta _{1}^{2}+\beta _{2}^{2}=-1$ and $2\beta _{1}^{2}+\beta _{2}^{2}=1$,
respectively. The first equation is impossible and the second one does not
give desired $\alpha $. The former two possibilities lead to Pell equations%
\begin{eqnarray*}
\beta _{2}^{2}-2\beta _{1}^{2} &=&-1 \\
\beta _{2}^{2}-2\beta _{1}^{2} &=&1
\end{eqnarray*}%
respectively. The solutions to the first equation are $%
(1,1),(7,5),(41,29),...$ and parameters $\alpha $ corresponding to these
solutions are $1,25,841,...$ . Note that $\alpha $ can not be 1 by the
assumption. The solutions to the last equation are $%
(3,2),(17,12),(99,70),... $ and parameters $\alpha $ corresponding to these
solutions are $-4,-144,-4900,...$ .
\end{proof}

It is shown that the terms of the EDS for which the eighth term is zero can
not be a square for any choice of $\alpha $ in the following theorem.

\begin{theorem}
Let $(h_{n})$ be an elliptic divisibility sequence for which the eighth term
is zero. If $n\equiv 2,6,7,9,10,14~(16)$ then $h_{n}$ is not a square for
all $\alpha \in 
\mathbb{Z}
\backslash \{0,1\}$.
\end{theorem}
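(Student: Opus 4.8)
The plan is to proceed exactly as in the proof of Theorem 5.9, substituting the relevant residue classes into the general term formula \eqref{n8} for $N=8$ and examining each resulting exponent monomial. Since the minimal index is $N=8$ and the relevant moduli appearing in the exponents of $\alpha$, $\alpha-1$, and $2\alpha-1$ are $16$, $16$, and $8$ respectively, the natural period for the sign $\varepsilon$ and for the exponent patterns is $16$. For each of the six residue classes $n\equiv 2,6,7,9,10,14~(16)$, I would write $n=16k+r$ and compute the three exponents $\{(15n^2-p)/16\}$, $\{(7n^2-q)/16\}$, $\{(3n^2-k)/8\}$ as explicit quadratics in $k$, reading off $p,q,k$ from the case tables in Theorem 3.2.

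The key observation driving the whole argument is the coprimality fact stated in the Notes: the distinct irreducible factors $\alpha$, $\alpha-1$, and $2\alpha-1$ appearing on the right-hand side are pairwise relatively prime. Consequently, a product $\alpha^{a}(\alpha-1)^{b}(2\alpha-1)^{c}$ is a perfect square if and only if each individual factor $\alpha^{a}$, $(\alpha-1)^{b}$, $(2\alpha-1)^{c}$ is a square for all $\alpha$, which (since $\alpha$ is a free integer parameter) forces the conclusion to depend only on the parities of the exponents $a,b,c$. So for each residue class I would check the parities of the three computed exponents. The claim is that in each of these six classes at least one exponent is odd while the corresponding base cannot itself be coerced into a square by a choice of $\alpha$; equivalently, the pattern of odd exponents does not reduce to one of the solvable Diophantine conditions (such as $(\alpha-1)(2\alpha-1)=\square$ from part (ii) of Theorem 5.9) that actually admit integer solutions.

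Concretely, after substitution one expects monomials such as $h_{16k+2}=\varepsilon\,\alpha^{A}(\alpha-1)^{B}(2\alpha-1)^{C}$ where the parity vector $(A \bmod 2, B \bmod 2, C \bmod 2)$ forces a condition like $\alpha(\alpha-1)=\square$, $\alpha=\square\ \&\ (\alpha-1)(2\alpha-1)=\square$, or an odd-exponent combination that singles out $\alpha(2\alpha-1)$ together with a residual square factor. I would then invoke the reduction table: conditions of the type $\alpha(\alpha-1)=\square$ reduce to the trivial equation $(2\alpha-1)^2\pm\beta^2=1$ (line 5 of the table), which has no solution yielding an admissible $\alpha$, and similarly for the other parity patterns the associated trivial or impossible equations rule out any valid $\alpha$. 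The point distinguishing these classes from the solvable ones in Theorem 5.9(ii),(iii) is precisely that the odd-exponent pattern here lands on an unconditionally unsolvable equation rather than a genuine Pell equation.

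The main obstacle will be purely bookkeeping: correctly computing, for each of the six residues modulo $16$, the three quadratic exponents and reducing them modulo $2$ without arithmetic slips, since an error in a single parity would either weaken or falsify the non-square claim. I expect that the heart of the matter is verifying that each of the six classes yields a parity pattern whose square-condition reduces to an equation already classified as trivial or impossible in the solutions table, and that none of them coincides with the genuinely solvable Pell cases $n\equiv 3,13$ or $n\equiv 5,11~(16)$ treated in Theorem 5.9. Once the parities are tabulated and matched against the table, the conclusion that $h_n$ is not a square for all $\alpha\in\mathbb{Z}\backslash\{0,1\}$ follows immediately for each class.
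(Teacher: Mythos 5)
Your plan is essentially the paper's: substitute $n=16k+r$ into the general term \eqref{n8}, read off the parities of the three exponents, and reduce the square condition to a Diophantine equation in $\alpha$. Carrying out the bookkeeping you defer, the six classes split into exactly two conditions: for $n\equiv 2,6,10,14~(16)$ one gets $h_n=\square$ iff $\alpha(\alpha-1)(2\alpha-1)=\square$ (equation 9 of the table), and for $n\equiv 7,9~(16)$ one gets $h_n=\square$ iff $\alpha(\alpha-1)=\square$ (equation 5, the trivial equations $(2\alpha-1)^2\pm\beta^2=1$, exactly as you predict). Where you genuinely diverge from the paper is in the first, dominant case: the paper does \emph{not} treat $\alpha(\alpha-1)(2\alpha-1)=\square$ as a trivial equation but passes to the curves $(\pm 2\alpha)^3\mp 3(2\alpha)^2\pm 2(2\alpha)=\beta^2$, shows they have rank zero, and checks the torsion points via Lutz--Nagell. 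Your systematic use of the pairwise coprimality of $\alpha$, $\alpha-1$, $2\alpha-1$ would instead force each factor to be $\pm$ a square (note the sign: the paper's convention is $\square=\pm\beta^2$, so coprimality gives $\pm$ squares, not squares), and already the first two factors yield $\pm a^2\mp b^2=1$, whose solutions all give $\alpha\in\{0,1\}$. That is a more elementary disposal of equation 9 than the paper's, and is consistent with the paper's own footnote that the full factorization is not always exploited. Two cautions: your guessed list of candidate conditions (e.g.\ ``$\alpha=\square$ \& $(\alpha-1)(2\alpha-1)=\square$'' or a residual $\alpha(2\alpha-1)$ factor) is not what actually materializes, so the proof is not complete until the six parity computations are done; and you must verify that none of these classes lands on the solvable Pell conditions of the adjacent theorem, which the explicit exponents confirm.
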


\begin{proof}
If $n\equiv 2,~6,~10~$or $14~(16)$ then we have%
\begin{equation*}
h_{16k+2}=-\alpha ^{240k^{2}+60k+3}(\alpha -1)^{112k^{2}+28k+1}(2\alpha
-1)^{96k^{2}+24k+1}\text{,\ }
\end{equation*}%
\begin{equation*}
h_{16k+6}=-\alpha ^{240k^{2}+180k+33}(\alpha -1)^{112k^{2}+84k+15}(2\alpha
-1)^{96k^{2}+72k+13}\text{,\ }
\end{equation*}%
\begin{equation*}
h_{16k+10}=\alpha ^{240k^{2}+420k+93}(\alpha -1)^{112k^{2}+140k+43}(2\alpha
-1)^{96k^{2}+120k+37},
\end{equation*}%
and%
\begin{equation*}
h_{16k+14}=\alpha ^{240k^{2}+420k+183}(\alpha -1)^{112k^{2}+196k+85}(2\alpha
-1)^{96k^{2}+168k+73}\text{,}
\end{equation*}%
respectively, by (\ref{n8}). From these equations we see that $h_{n}=$ $%
\square $ iff 
\begin{equation}
\alpha (\alpha -1)(2\alpha -1)=\square ,  \tag{9}  \label{9}
\end{equation}%
or equivalently $h_{n}=$ $\square $ iff 
\begin{equation*}
(2\alpha )^{3}-3(2\alpha )^{2}+2(2\alpha )=\beta ^{2}
\end{equation*}%
or 
\begin{equation*}
(-2\alpha )^{3}+3(-2\alpha )^{2}+2(-2\alpha )=\beta ^{2}.
\end{equation*}%
These last equations give elliptic curves with rank 0 and torsion points on
the these curves do not provide desired solutions of $\alpha $.

Let $n\equiv 7$, $9~(16)$. Then we have 
\begin{equation*}
h_{16k+7}=-\alpha ^{240k^{2}+210k+45}(\alpha -1)^{112k^{2}+98k+21}(2\alpha
-1)^{96k^{2}+84k+18}\text{,\ }
\end{equation*}%
and 
\begin{equation*}
h_{16k+9}=\alpha ^{240k^{2}+270k+75}(\alpha -1)^{112k^{2}+126k+35}(2\alpha
-1)^{96k^{2}+108k+30}\text{,\ }
\end{equation*}%
respectively, by (\ref{n8}). From these equations we see that $h_{n}=\square 
$ iff%
\begin{equation}
\text{ }\alpha (\alpha -1)=\square .  \tag{5}
\end{equation}%
This equation give trivial equations $(2\alpha -1)^{2}\pm \beta ^{2}=1$. It
is clear that the solutions of these equations do not provide desired
solutions of $\alpha $.
\end{proof}

We give cube terms independent of the admissible choice of $\alpha $ in the
following theorem.

\begin{theorem}
Let $(h_{n})$ be an elliptic divisibility sequence for which the eighth term
is zero. \newline
i. If $n\equiv 1,7,17,23~(24)$,$~$then $h_{n}=C$ for all $\alpha \in 
\mathbb{Z}
\backslash \{0,1\}$. \newline
ii. If $n\equiv 3,4,20,21~(24)$,$~$then $h_{n}=C$ iff $\alpha =C$.\newline
iii. If $n\equiv 6,18~(24)$,$~$then $h_{n}=C$ iff $2\alpha -1=C$.\newline
iv. If $n\equiv 9,15~(24)$,$~$then $h_{n}=C$ iff $\alpha -1=C$.
\end{theorem}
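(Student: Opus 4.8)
The plan is to read each term directly off the general term formula (\ref{n8}) for $N=8$ and to decide cube-ness by inspecting the three exponents modulo $3$. Writing
\[
h_n = \varepsilon\, \alpha^{A}(\alpha-1)^{B}(2\alpha-1)^{C},
\]
with $A=(15n^2-p)/16$, $B=(7n^2-q)/16$, $C=(3n^2-k)/8$, two facts reduce the whole problem to a congruence computation. First, $\varepsilon=\pm1$ and $-1=(-1)^3$ is a cube, so the sign never affects whether $h_n$ is a cube. Second, by the basic observation of this section the factors $\alpha$, $\alpha-1$, $2\alpha-1$ are pairwise relatively prime; hence $h_n$ is a cube iff each of $\alpha^{A}$, $(\alpha-1)^{B}$, $(2\alpha-1)^{C}$ is a cube, and $\alpha^{A}$ is a cube precisely when $3\mid A$ or $\alpha=C$ (and similarly for the other two factors, using that cube-ness of $X^2$ is equivalent to cube-ness of $X$). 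Thus the entire statement follows once the residues of $A$, $B$, $C$ modulo $3$ are known for each class of $n$.

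To compute those residues I would fix a residue $r$ of $n$ modulo $8$, which fixes $p$, $q$, $k$, and write $n=8j+r$; then $A$, $B$, $C$ become explicit quadratics in $j$ whose reductions modulo $3$ depend only on $j\bmod 3$. Since $24=8\cdot 3$, specifying $n\bmod 24$ pins down both $r$ (hence $p,q,k$) and $j\bmod 3$, which is exactly why the four parts are phrased modulo $24$. For part (i), taking $n\equiv 1,7,17,23~(24)$ I expect $A\equiv B\equiv C\equiv 0~(3)$, so all three factors are automatically cubes and $h_n=C$ for every admissible $\alpha$. For instance, when $n\equiv 1~(8)$ one has $A=15j(4j+1)$, $B=7j(4j+1)$, $C=6j(4j+1)$, and the residues $n\equiv 1,17~(24)$ correspond to $j\equiv 0,2~(3)$, for which $j(4j+1)\equiv j(j+1)\equiv 0~(3)$ also kills $B$.

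For parts (ii)--(iv) the same substitution should show that exactly one of $A,B,C$ is not divisible by $3$ while the other two are, so precisely one factor controls cube-ness. The surviving factor is $\alpha$ for $n\equiv 3,4,20,21~(24)$, giving $h_n=C\iff\alpha=C$; it is $2\alpha-1$ for $n\equiv 6,18~(24)$, giving $h_n=C\iff 2\alpha-1=C$; and it is $\alpha-1$ for $n\equiv 9,15~(24)$, giving $h_n=C\iff\alpha-1=C$. In each case the coprimality observation converts ``the product is a cube'' into ``the isolated factor is a cube,'' which is the claimed equivalence.

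The computations themselves are routine; the only real work, and the main source of error, is the bookkeeping. One must apply the correct values of $p$, $q$, $k$ for the residue of $n$ modulo $8$ attached to each class modulo $24$, and then reduce the three quadratics modulo $3$ consistently across all twelve listed residues. The content of the theorem is precisely the assertion that these reductions fall into the four uniform patterns above, and verifying that pattern for each residue is the heart of the proof, though no single verification is hard.
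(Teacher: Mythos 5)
Your proposal is correct and follows essentially the same route as the paper: substitute the relevant residues of $n$ into the general term (\ref{n8}), observe that the exponents of $\alpha$, $\alpha-1$, $2\alpha-1$ are all $\equiv 0\ (3)$ in case (i) and that exactly one is nonzero mod $3$ in cases (ii)--(iv), and use the pairwise coprimality of the factors to isolate that one. The paper carries out the same computation with $n=24k+r$ and leaves the steps ``product is a cube iff each factor is'' and ``$\alpha^{2}=C$ iff $\alpha=C$'' implicit, which you make explicit; your spot checks of the exponents agree with the paper's.
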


\begin{proof}
For (\textit{i}), if $n\equiv 1,7,17$ or $23~(24)$,$~$then $%
n=24k+1,n=24k+7,n=24k+17,$ or $24k+23$ $(k\in 
\mathbb{N}
)$. Substituting these values into (\ref{n8}), we have%
\begin{equation*}
h_{24k+1}=\alpha ^{540k^{2}+45k}(\alpha -1)^{252k^{2}+21k}(2\alpha
-1)^{216k^{2}+18k},
\end{equation*}%
\begin{equation*}
h_{24k+7}=-\alpha ^{540k^{2}+315k+45}(\alpha -1)^{252k^{2}+147k+21}(2\alpha
-1)^{216k^{2}+126k+18},
\end{equation*}%
\begin{equation*}
h_{24k+17}=\alpha ^{540k^{2}+765k+270}(\alpha
-1)^{252k^{2}+357k+126}(2\alpha -1)^{216k^{2}+306k+108},
\end{equation*}%
and%
\begin{equation*}
\text{ }h_{24k+23}=-\alpha ^{540k^{2}+1035k+495}(\alpha
-1)^{252k^{2}+483k+231}(2\alpha -1)^{216k^{2}+414k+198},
\end{equation*}%
respectively. Hence, $h_{n}=C$ for all $\alpha \in 
\mathbb{Z}
\backslash \{0,1\}$.

For (\textit{ii}), if $n\equiv 3,4,20$ or $21~(24)$,$~$then $%
n=24k+3,n=24k+4,n=24k+20,$ or $24k+21$ $(k\in 
\mathbb{N}
)$. Substituting these values into (\ref{n8}), we have%
\begin{equation*}
h_{24k+3}=-\alpha ^{540k^{2}+135k+8}(\alpha -1)^{252k^{2}+63k+3}(2\alpha
-1)^{216k^{2}+54k+3},
\end{equation*}%
\begin{equation*}
h_{24k+4}=-\alpha ^{540k^{2}+180k+14}(\alpha -1)^{252k^{2}+84k+6}(2\alpha
-1)^{216k^{2}+72k+6},
\end{equation*}%
\begin{equation*}
h_{24k+20}=-\alpha ^{540k^{2}+900k+374}(\alpha
-1)^{252k^{2}+420k+174}(2\alpha -1)^{216k^{2}+360k+150},
\end{equation*}%
and%
\begin{equation*}
\text{ }h_{24k+21}=\alpha ^{540k^{2}+945k+413}(\alpha
-1)^{252k^{2}+441k+192}(2\alpha -1)^{216k^{2}+378k+165},
\end{equation*}%
respectively. Thus $h_{n}=C$ iff $\alpha ^{2}=C.$ The cases (\textit{iii})
and (\textit{iv}) can be proved in the same way.
\end{proof}

It is shown that the terms of the EDS for which the eighth term is zero can
not be a cube for any choice of $\alpha $ in the following theorem.

\begin{theorem}
Let $(h_{n})$ be an elliptic divisibility sequence for which the eighth term
is zero. If $n\equiv 2$, $5$, $10$, $11$, $12$, $13$, $14$, $19$, $22~(24)$
then $h_{n}\neq C~$for all $\alpha \in 
\mathbb{Z}
\backslash \{0,1\}$.
\end{theorem}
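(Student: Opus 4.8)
The plan is to handle all nine residue classes by the mechanism already used in Theorems 5.9 and 5.11: for each $r\in\{2,5,10,11,12,13,14,19,22\}$, substitute $n=24k+r$ into the general term \eqref{n8}, read off the three exponents as quadratics in $k$, reduce them modulo $3$, and translate the resulting cube condition into one of the Diophantine equations already tabulated in the chart of this section.

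The first and most useful reduction is the following. Writing $n=24k+r$, each exponent in \eqref{n8} becomes a quadratic $Ak^{2}+Bk+C_{0}$ in $k$ whose leading and linear coefficients (here $A\in\{540,252,216\}$, with the matching $B$) are all divisible by $3$. Hence modulo $3$ only the constant term $C_{0}$ survives, and $C_{0}$ depends on $r$ alone. Thus the exponents of $\alpha$, $\alpha-1$ and $2\alpha-1$ collapse, modulo $3$, to residues determined purely by $r$; and since the sign $\varepsilon=\pm 1$ is always a cube, it is irrelevant to the question. The cube condition for each class therefore reduces to whether a monomial $\alpha^{a}(\alpha-1)^{b}(2\alpha-1)^{c}$ with $a,b,c\in\{0,1,2\}$ is a cube.

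Next I would invoke the basic observation that $\alpha$, $\alpha-1$ and $2\alpha-1$ are pairwise coprime. Because $1$ and $2$ are both coprime to $3$, a coprime product $\alpha^{a}(\alpha-1)^{b}(2\alpha-1)^{c}$ with exponents in $\{0,1,2\}$ is a cube exactly when every factor carrying a nonzero exponent is itself a cube. Carrying out the nine reductions, I expect the classes to split into two families: $r\in\{2,10,14,22\}$, each giving the condition $(\alpha-1)(2\alpha-1)=C$; and $r\in\{5,11,12,13,19\}$, each forcing $\alpha=C$ together with $\alpha-1=C$ (the class $r=12$ producing $\alpha^{2}(\alpha-1)^{2}=C$, which by coprimality still forces both factors to be cubes).

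Finally I would dispose of each family via the table. The condition $(\alpha-1)(2\alpha-1)=C$ forces $\alpha-1=\beta_{1}^{3}$ and $2\alpha-1=\beta_{2}^{3}$, whence $\beta_{2}^{3}-2\beta_{1}^{3}=1$, the `classical' equation $x^{3}+2y^{3}=1$; by Theorem $5$, Chapter $24$ of \cite{MOR} its only integer solutions are $(1,0)$ and $(-1,1)$, which return $\alpha\in\{0,1\}$ and are excluded by hypothesis. The condition $\alpha=C$ and $\alpha-1=C$ forces $\beta_{1}^{3}-\beta_{2}^{3}=1$, whose only solutions again yield $\alpha\in\{0,1\}$. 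In either family no admissible $\alpha\in\mathbb{Z}\setminus\{0,1\}$ renders $h_{n}$ a cube, which is exactly the assertion. The only genuine number-theoretic input is the resolution of $x^{3}+2y^{3}=1$, which is imported from \cite{MOR} rather than proved here; the remaining work is the nine substitutions and their reductions modulo $3$, which is pure bookkeeping. The main obstacle is therefore not conceptual but arithmetical: the chief risk is a slip in one of the constant-term reductions $C_{0}\bmod 3$, so I would cross-check each against the values $h_{r}$ obtained at $k=0$.
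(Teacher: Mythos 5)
Your proposal is correct and follows essentially the same route as the paper's proof: substitute $n=24k+r$ into the general term, reduce the exponents modulo $3$, and land on the same split into $(\alpha-1)(2\alpha-1)=C$ for $r\in\{2,10,14,22\}$ (the classical equation $x^{3}+2y^{3}=1$) and $\alpha=\beta_1^{3}$, $\alpha-1=\beta_2^{3}$ for $r\in\{5,11,12,13,19\}$ (the trivial equation $\beta_1^{3}-\beta_2^{3}=1$). Your observation that the leading and linear coefficients of the exponent quadratics are divisible by $3$, so only the constant term matters, is a clean streamlining of the paper's explicit case-by-case substitutions, but it is the same argument.
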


\begin{proof}
If $n\equiv 5~(24)$ then%
\begin{equation*}
h_{24k+5}=\alpha ^{540k^{2}+225k+23}(\alpha -1)^{252k^{2}+105k+10}(2\alpha
-1)^{216k^{2}+90k+9}\text{\ }
\end{equation*}%
by (\ref{n8}). So, $h_{n}=C$ iff%
\begin{equation}
\alpha ^{2}(\alpha -1)=C.  \tag{6}
\end{equation}%
Let $n\equiv 12~(24)$. Then we have%
\begin{equation*}
h_{24k+12}=\alpha ^{540k^{2}+540k+134}(\alpha -1)^{252k^{2}+252k+62}(2\alpha
-1)^{216k^{2}+216k+54},\ 
\end{equation*}%
by (\ref{n8}). Therefore $h_{n}=C$ iff%
\begin{equation}
\alpha ^{2}(\alpha -1)^{2}=C.  \tag{10}  \label{10}
\end{equation}

The equations (6) and (10) lead to trivial equation%
\begin{equation*}
\beta _{1}^{3}-\beta _{2}^{3}=1
\end{equation*}%
where $\alpha =\beta _{1}^{3}$, $\alpha -1=\beta _{2}^{3}$ \ and $\beta _{1}$%
, $\beta _{2}$ are integers and solutions of this equation do not provide
desired $\alpha $. The cases where $n\equiv 11,13,19~(24)$ can be proved in
the same way.

If $n\equiv 2,10,14,22~(24)$ then 
\begin{equation*}
h_{24k+2}=\alpha ^{540k^{2}+90k+3}(\alpha -1)^{252k^{2}+42k+1}(2\alpha
-1)^{216k^{2}+36k+1},
\end{equation*}%
\begin{equation*}
h_{24k+10}=-\alpha ^{540k^{2}+450k+93}(\alpha -1)^{252k^{2}+210k+43}(2\alpha
-1)^{216k^{2}+180k+37},
\end{equation*}%
\begin{equation*}
h_{24k+14}=-\alpha ^{540k^{2}+630k+183}(\alpha
-1)^{252k^{2}+294k+85}(2\alpha -1)^{216k^{2}+252k+73},
\end{equation*}%
\begin{equation*}
h_{24k+22}=\alpha ^{540k^{2}+990k+453}(\alpha
-1)^{252k^{2}+462k+211}(2\alpha -1)^{216k^{2}+396k+181},
\end{equation*}%
respectively, by (\ref{n8}). From these equations we see that $h_{n}=C$ iff 
\begin{equation}
\text{ }(\alpha -1)(2\alpha -1)=C.  \tag{11}  \label{11}
\end{equation}%
It follows that $(\alpha -1,2\alpha -1)=(\beta _{1}^{3},\beta _{2}^{3})$
where $\beta _{1},\beta _{2}$ are integers. This gives the classical equation%
\begin{equation*}
\beta _{2}^{3}+2(-\beta _{1})^{3}=1
\end{equation*}%
and the only solution of this equation is $(\beta _{1}$, $\beta
_{2})=(-1,-1) $ and this solution does not provide desired $\alpha $.
\end{proof}

\subsection{The Case $N=9$}

We determine square terms independent of the admissible choice of $\alpha $.
An easy calculation as in Theorem 5.1 and Theorem 5.2 gives the following
theorems.

\begin{theorem}
Let $(h_{n})$ be an elliptic divisibility sequence for which the ninth term
is zero. \newline
i. If $n\equiv 1,17~(18)$,$~$then $h_{n}=\square $ for all $\alpha \in 
\mathbb{Z}
\backslash \{0,1\}$. \newline
ii. If $n\equiv 5,13~(18)$,$~$then $h_{n}=\square $ iff $\alpha =\square $.
\end{theorem}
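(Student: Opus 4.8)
The plan is to read both parts straight off the rank-$9$ general term (\ref{n9}) of Theorem 3.2,
\[
h_{n}=\varepsilon\,\alpha^{\{(7n^{2}-p)/9\}}(\alpha-1)^{\{(4n^{2}-q)/9\}}\gamma^{\{(n^{2}-k)/3\}},
\]
by substituting each prescribed residue class $n=18k+r$ ($k\in\mathbb{N}$) and then examining only the parities of the three exponents. Because the paper's convention is $\square=\pm\beta^{2}$, the sign $\varepsilon$ is irrelevant to whether $h_{n}$ is a square; likewise $\gamma=\alpha^{2}-\alpha+1\geq1$ never vanishes, and with $\alpha\in\mathbb{Z}\backslash\{0,1\}$ every factor is a nonzero integer. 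The guiding principle is that a factor raised to an even exponent is automatically a perfect square, so the whole term is $\square$ exactly when the product of the factors carried to odd exponents is.

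For part (\textit{i}) I would take $n=18k+1$ and $n=18k+17$. From the case definitions of (\ref{n9}), both satisfy $n\equiv1,8~(9)$, giving $p=7$, $q=4$ and $\gamma$-parameter $1$; the signs are $\varepsilon=+1$ for $n\equiv1~(18)$ and $\varepsilon=-1$ for $n\equiv17~(18)$. Since $9\mid(n^{2}-1)$ in these classes, the three exponents collapse to $7(n^{2}-1)/9$, $4(n^{2}-1)/9$ and $(n^{2}-1)/3$; writing $(n^{2}-1)/9$ as a polynomial in $k$ shows, for $n=18k+1$, that they equal $28k(9k+1)$, $16k(9k+1)$ and $12k(9k+1)$, each visibly even (and the same for $n=18k+17$). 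Hence $h_{n}=\pm(\text{perfect square})=\square$ for every admissible $\alpha$.

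For part (\textit{ii}) I would take $n=18k+5$ and $n=18k+13$, both of which satisfy $n\equiv4,5~(9)$, so that $p=4$, $q=10$ and $\gamma$-parameter $1$, with $\varepsilon=+1$ and $\varepsilon=-1$ respectively. The decisive point is that the exponent $(7n^{2}-4)/9$ of $\alpha$ comes out odd — it equals $252k^{2}+140k+19$ and $252k^{2}+364k+131$ respectively — while the exponents of $\alpha-1$ and of $\gamma$ are both even. Thus in each class $h_{n}=\varepsilon\,\alpha\cdot M^{2}$ with $M$ an integer, so $h_{n}=\square$ iff $\alpha=\square$: if $\alpha=\pm\beta^{2}$ then $h_{n}=\pm(\beta M)^{2}$, and conversely $\varepsilon\alpha M^{2}=\pm\beta^{2}$ forces $\alpha$ to be $\pm$ a rational square, hence $\pm$ an integer square.

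This is routine modular bookkeeping of exactly the kind used in Theorems 5.1 and 5.2. The only places needing care are selecting the correct $p$, $q$, $\varepsilon$ and $\gamma$-parameter from the case tables, keeping the loop index $k$ in $n=18k+r$ distinct from the internal symbol $k$ of (\ref{n9}), and checking the divisibilities $9\mid(7n^{2}-p)$, $9\mid(4n^{2}-q)$ and $3\mid(n^{2}-k)$ so that the exponents really are integers before their parities are read off. I expect no obstacle beyond this.
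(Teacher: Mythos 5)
Your proposal is correct and is exactly the argument the paper intends: Theorem 5.13 is stated with only the remark that ``an easy calculation as in Theorem 5.1 and Theorem 5.2'' proves it, and your substitution of $n=18k+r$ into (\ref{n9}) followed by the parity check of the three exponents (all even for $r=1,17$; only the exponent of $\alpha$ odd for $r=5,13$) is precisely that calculation, with the correct values $p,q,k,\varepsilon$ read from the case tables.
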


It is shown that the terms of the EDS for which the ninth term is zero can
not be a square for any choice of $\alpha $ in the following theorem.

\begin{theorem}
Let $(h_{n})$ be an elliptic divisibility sequence for which the ninth term
is zero. If $n\equiv 2$, $3$, $4$, $6$, $7$, $8$, $10$, $11$, $12$, $14$, $%
15 $, $16~(18)$ then $h_{n}$ is not a square for all $\alpha \in 
\mathbb{Z}
\backslash \{0,1\}$.
\end{theorem}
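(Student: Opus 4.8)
The plan is to follow exactly the method of Theorems 5.10 and 5.11. For each of the twelve residues $r$ in the list I would set $n = 18k + r$ and substitute into the general term (\ref{n9}), obtaining $h_n$ as $\pm$ a monomial in $\alpha$, $\alpha - 1$, and $\gamma = \alpha^2 - \alpha + 1$ whose three exponents are explicit quadratics in $k$. Since squareness depends only on the parities of these exponents, I would reduce each exponent modulo $2$. The crucial structural input is the basic observation recorded just before the table of solutions: $\alpha$, $\alpha - 1$, $\gamma$ are pairwise relatively prime, so $h_n = \square$ if and only if the product of those factors occurring with an odd exponent is itself a square (the even-exponent factors contribute squares automatically). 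Thus each residue collapses to a single condition asserting that a subset-product of $\{\alpha,\ \alpha-1,\ \gamma\}$ is a square.

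Carrying out the parity bookkeeping, I expect the twelve residues to fall into exactly three families. The residues $n \equiv 6, 7, 11, 12~(18)$ should give $\alpha(\alpha - 1) = \square$, which is equation $(5)$ of the table of solutions and reduces to the trivial equations $(2\alpha - 1)^2 \pm \beta^2 = 1$; the residues $n \equiv 4, 14~(18)$ should give $\gamma = \alpha^2 - \alpha + 1 = \square$, which is equation $(13)$ and reduces to $(2\alpha - 1)^2 \pm \beta^2 = -3$; and the remaining residues $n \equiv 2, 3, 8, 10, 15, 16~(18)$ should give one of $(\alpha - 1)\gamma = \square$ or $\alpha\gamma = \square$. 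Here I would exploit the symmetry $\alpha \mapsto 1 - \alpha$, under which $\alpha(\alpha^2 - \alpha + 1)$ becomes $(1-\alpha)(\alpha^2-\alpha+1) = -(\alpha-1)(\alpha^2-\alpha+1)$; since $\square$ denotes $\pm\beta^2$, the two conditions are equivalent and both collapse to equation $(12)$, namely $\alpha^3 - 2\alpha^2 + 2\alpha - 1 = \pm\beta^2$.

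The main obstacle is disposing of equation $(12)$; the trivial families are immediate, since a quantity lying strictly between two consecutive squares is never a square, which settles $\alpha(\alpha-1) = \square$ and $\gamma = \square$ for all $\alpha \in \mathbb{Z}\setminus\{0,1\}$. For equation $(12)$ I would clear to Weierstrass form $\beta^2 = \alpha^3 - 2\alpha^2 + 2\alpha - 1$ (together with its sign variant) and verify that the resulting elliptic curve has Mordell--Weil rank zero; this rank computation is precisely the hard point. Granting it, the only integral points are the torsion points, which the Lutz--Nagell theorem supplies as a finite explicit list, and a direct check shows that none of them has $\alpha \in \mathbb{Z}\setminus\{0,1\}$. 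Assembling the three families then yields the non-squareness for all twelve residue classes simultaneously.
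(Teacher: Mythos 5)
Your proposal is correct and follows the paper's overall strategy (substitute $n=18k+r$ into (\ref{n9}), read off the parities of the three exponents, use the pairwise coprimality of $\alpha$, $\alpha-1$, $\gamma=\alpha^{2}-\alpha+1$ to reduce to a subset-product condition, and dispose of the resulting Diophantine equations), but your grouping of the residues differs from the paper's --- and yours is the correct one. The paper places $n\equiv 8,10~(18)$ in the family $\alpha(\alpha-1)=\square$ alongside $6,7,11,12$; however, for $n=18k+8$ the exponents from (\ref{n9}) are $252k^{2}+224k+49$, $144k^{2}+128k+28$, $108k^{2}+96k+21$ on $\alpha$, $\alpha-1$, $\gamma$ respectively, so the exponent of $\alpha-1$ is even and the condition is $\alpha(\alpha^{2}-\alpha+1)=\square$, exactly as you predict (likewise for $n\equiv 10$). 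That equation does not appear in the paper's table of square equations, and your reduction of it to equation (12) via the involution $\alpha\mapsto 1-\alpha$ --- which fixes $\gamma$, sends $\alpha$ to $-(\alpha-1)$, and permutes $\mathbb{Z}\setminus\{0,1\}$, with the sign absorbed by the convention $\square=\pm\beta^{2}$ --- is a genuine addition that repairs this slip, whereas the paper disposes of $8,10$ by the (here inapplicable) trivial equations $(2\alpha-1)^{2}\pm\beta^{2}=1$. Your ``strictly between consecutive squares'' treatment of the trivial families is an elementary variant of the paper's reductions to $(2\alpha-1)^{2}\pm\beta^{2}=1$ and $(2\alpha-1)^{2}\pm\beta^{2}=-3$ and is equally valid (note only that you must also rule out the value $-\beta^{2}$, which is immediate since $\alpha(\alpha-1)\geq 0$ and $\gamma>0$). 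The one point you defer --- that the curves $\pm\beta^{2}=\alpha^{3}-2\alpha^{2}+2\alpha-1$ have rank zero --- is likewise only asserted, not proved, in the paper, so you are on equal footing there.
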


\begin{proof}
If $n\equiv 6,7,8,10,11$ or $12~(18)$ then $h_{n}=\square $ iff 
\begin{equation}
\alpha (\alpha -1)=\square  \tag{5}
\end{equation}%
by (\ref{n9}). But this is impossible by proof of Theorem 5.10.

If $n\equiv 2,3,15,16~(18)$ then $h_{n}=\square $ iff 
\begin{equation}
(\alpha -1)(\alpha ^{2}-\alpha +1)=\square  \tag{12}  \label{12}
\end{equation}%
by (\ref{n9}), or equivalently $h_{n}=\square $ iff $\alpha ^{3}-2\alpha
^{2}+2\alpha -1=\pm \beta ^{2}$ where $\beta $ is an integer. These last
equations give elliptic curves with rank $0$ and torsion points on these
curves do not give desired $\alpha $.

Let $n\equiv 4,14~(18)$. Then $h_{n}=\square $ iff 
\begin{equation}
\alpha ^{2}-\alpha +1=\square  \tag{13}  \label{13}
\end{equation}%
by (\ref{n9}). This last equation leads to trivial equations 
\begin{equation*}
(2\alpha -1)^{2}-\beta ^{2}=-3
\end{equation*}%
or 
\begin{equation*}
(2\alpha -1)^{2}+\beta ^{2}=-3
\end{equation*}%
where $\beta $ is an integer. The last equation is impossible and from the
first one we only have $\alpha =0$ and $1$.
\end{proof}

We give the cube terms in the following theorem.

\begin{theorem}
Let $(h_{n})$ be an elliptic divisibility sequence for which the ninth term
is zero. \newline
i. If $n\equiv 1,3,6,12,15,21,24,26~(27)$,$~$then $h_{n}=C$ for all $\alpha
\in 
\mathbb{Z}
\backslash \{0,1\}$. \newline
ii. If $n\equiv 4,23~(27)$,$~$then $h_{n}=C$ iff $(\alpha ^{2}-\alpha
+1)^{2}=C$.
\end{theorem}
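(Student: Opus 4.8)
The plan is to proceed exactly as in Theorems 5.11 and 5.14, reading the three exponents off the general term (\ref{n9}) for rank $N=9$ and determining their residues modulo $3$. Writing $n=27k+r$ and expanding $n^2=729k^2+54rk+r^2$, the exponents of $\alpha$, of $\alpha-1$, and of $\gamma=\alpha^2-\alpha+1$ become
\begin{equation*}
\tfrac{7n^2-p}{9}=567k^2+42rk+\tfrac{7r^2-p}{9},\quad \tfrac{4n^2-q}{9}=324k^2+24rk+\tfrac{4r^2-q}{9},\quad \tfrac{n^2-k_0}{3}=243k^2+18rk+\tfrac{r^2-k_0}{3},
\end{equation*}
where $p,q$ and the internal constant $k_0$ are fixed by $r\bmod 9$. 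Since the $k$- and $k^2$-coefficients $567,42,324,24,243,18$ are all divisible by $3$, each exponent is $\equiv 0\pmod 3$ for every $k$ precisely when its constant term is, so the whole question reduces to evaluating the three constants $\tfrac{7r^2-p}{9}$, $\tfrac{4r^2-q}{9}$, $\tfrac{r^2-k_0}{3}$ modulo $3$ for each residue $r$. This is also the reason the statement is organised modulo $27$: divisibility of a quadratic-over-$9$ exponent by $3$ is controlled by $n^2\bmod 27$, hence by $r=n\bmod 27$.

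For part (i) I would run through $r\in\{1,3,6,12,15,21,24,26\}$, reading $(p,q,k_0)$ off $r\bmod 9$. In each of the eight cases all three constants are divisible by $3$; for example $r=1$ gives $(0,0,0)$ and $r=3$ gives $(6,3,3)$, all $\equiv 0\pmod 3$, and the remaining residues behave the same way. Hence
\begin{equation*}
h_n=\varepsilon\,\alpha^{3a}(\alpha-1)^{3b}\gamma^{3c}
\end{equation*}
for suitable integers $a,b,c$, and as $\varepsilon=\pm1=(\pm1)^3$ this is a cube for every $\alpha\in\mathbb{Z}\backslash\{0,1\}$, which is (i).

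For part (ii), with $r\in\{4,23\}$, the same lookup (both have $(p,q,k_0)=(4,10,1)$) shows the constants attached to $\alpha$ and $\alpha-1$ are divisible by $3$ while the constant for $\gamma$ is $\equiv 2\pmod 3$: indeed $r=4$ gives $(12,6,5)$ and $r=23$ gives $(411,234,176)$. Therefore
\begin{equation*}
h_n=\varepsilon\,\alpha^{3a}(\alpha-1)^{3b}\gamma^{3c+2}.
\end{equation*}
The first three factors form a cube, so since the nonzero cubes form a subgroup of $\mathbb{Q}^{*}$, $h_n$ is a cube if and only if the remaining factor $\gamma^2=(\alpha^2-\alpha+1)^2$ is a cube, which is the stated criterion. (Equivalently one may invoke the basic observation of this section that $\alpha,\alpha-1,\gamma$ are pairwise coprime.)

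The computation is purely arithmetic and presents no genuine obstacle; the only points requiring care are attaching the correct triple $(p,q,k_0)$ to each residue class, since an error in $r\bmod 9$ shifts every constant term, and recording explicitly that the divisibility by $3$ holds for all $k$ rather than merely for $k=0$ — a point the observation about the coefficients $567,42,324,24,243,18$ settles at once.
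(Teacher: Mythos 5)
Your proposal is correct and follows essentially the same route as the paper: both reduce the claim to reading the exponents of $\alpha$, $\alpha-1$ and $\gamma$ off the general term (\ref{n9}) modulo $3$ for each residue of $n$ modulo $27$, and your arithmetic for the constant terms checks out. The only difference is cosmetic — the paper dismisses part (i) as "an easy calculation" and, beyond the stated equivalence in part (ii), goes on to solve $(\alpha^{2}-\alpha+1)^{2}=C$ via the elliptic curve $\beta^{3}-48=(8\alpha-4)^{2}$, which is extra information not required by the statement itself.
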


\begin{proof}
For (\textit{i}), if $n\equiv 1,3,6,12,15,21,24,26~(27)$, then an easy
calculation gives that $h_{n}=C$ for all $\alpha \in 
\mathbb{Z}
\backslash \{0,1\}$. \newline
For (\textit{ii}), if $n\equiv 4,23~(27)$,$~$then $h_{n}=C$ iff 
\begin{equation}
(\alpha ^{2}-\alpha +1)^{2}=C  \tag{14}  \label{14}
\end{equation}%
or equivalently $\alpha ^{2}-\alpha +1=$ $C$ by (\ref{n9}). This last
equation leads to an elliptic curve with rank $1$%
\begin{equation*}
\beta ^{3}-48=(8\alpha -4)^{2}
\end{equation*}%
where $\beta $ is an integer. Applying the \textit{Elliptic Logarithm Method}
we see that the only solutions to the above equation are $\alpha =-18$ and $%
19$.
\end{proof}

\begin{theorem}
Let $(h_{n})$ be an elliptic divisibility sequence for which the ninth term
is zero. If $n\equiv 2$, $5$, $7$, $8$, $10$, $11$, $13$, $14$, $16$, $17$, $%
19$, $20$, $22$, $25~(27)$ then $h_{n}\neq C~$for all $\alpha \in 
\mathbb{Z}
\backslash \{0,1\}$.
\end{theorem}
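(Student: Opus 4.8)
The plan is to argue exactly as in the proofs of Theorems 5.10, 5.12 and 5.18, using the general term (\ref{n9}) together with the pairwise coprimality of the three irreducible factors $\alpha$, $\alpha-1$ and $\gamma=\alpha^{2}-\alpha+1$ recorded in the basic observation at the beginning of this section. For each residue $r$ in the list I would set $n=27k+r$ and substitute into (\ref{n9}), obtaining $h_{n}=\pm\,\alpha^{A(k)}(\alpha-1)^{B(k)}\gamma^{C(k)}$ with $A,B,C$ explicit quadratics in $k$; the modulus $27=9\cdot 3$ is chosen precisely so that each of the three exponents is constant modulo $3$ along the class. Since $\alpha$, $\alpha-1$ and $\gamma$ are pairwise relatively prime, the product $\pm\,\alpha^{A}(\alpha-1)^{B}\gamma^{C}$ is a cube if and only if each of $\alpha^{A\bmod 3}$, $(\alpha-1)^{B\bmod 3}$ and $\gamma^{C\bmod 3}$ is separately a cube, the sign being harmless because $-1=(-1)^{3}$. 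Thus for every listed $r$ the condition $h_{n}=C$ collapses to a cube condition on the product of exactly those factors whose residual exponent is nonzero modulo $3$.

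Next I would tabulate, class by class, the residual exponents modulo $3$. Three points must be checked here. First, none of the fourteen classes leaves all three exponents divisible by $3$ (those are the classes $1,3,6,12,15,21,24,26$ already settled), and the classes $0,9,18~(27)$ are excluded because there $9\mid n$ and $h_{n}=0$. Second, and this is the crucial feature, none of the fourteen classes reduces to a single-factor condition $\alpha=C$ or $\alpha-1=C$ (such a condition would be satisfied by every cube value of the parameter and would break the claim). The spot checks bear this out: for instance $n\equiv 8~(27)$ gives $\alpha$-exponent $\equiv 1$, $(\alpha-1)$-exponent $\equiv 1$ and $\gamma$-exponent $\equiv 0$, so $h_{n}=C$ iff $\alpha(\alpha-1)=C$; while $n\equiv 7~(27)$ gives the mixed condition $\alpha\gamma=C$, and $n\equiv 2,5~(27)$ give the triple condition $\alpha(\alpha-1)\gamma=C$. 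In every case at least two of the coprime factors are forced to be cubes simultaneously.

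It then remains to verify that each of these systems is unsolvable in $\alpha\in\mathbb{Z}\setminus\{0,1\}$. The requirement that $\alpha$ and $\alpha-1$ both be cubes is the trivial equation $\beta_{1}^{3}-\beta_{2}^{3}=1$ of consecutive cubes, whose only integer solutions give $\alpha\in\{0,1\}$, exactly the excluded values; so all classes whose residual condition contains $\alpha(\alpha-1)$ are immediately disposed of. For the mixed conditions I would invoke the preceding theorem, where $\gamma=C$, i.e. $\alpha^{2}-\alpha+1=C$, was shown via the rank-one elliptic curve $\beta^{3}-48=(8\alpha-4)^{2}$ and the Elliptic Logarithm Method to force $\alpha\in\{-18,19\}$. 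Since none of $-18$, $19$, $-19$, $18$ is a perfect cube, the value $\alpha=-18$ or $19$ can never simultaneously render $\alpha$ or $\alpha-1$ a cube, so the systems $\alpha\gamma=C$, $(\alpha-1)\gamma=C$ and $\alpha(\alpha-1)\gamma=C$ have no admissible solution either. Presenting $n\equiv 8~(27)$ in full and remarking that the remaining thirteen classes are of the same three forms would complete the proof.

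The main obstacle is bookkeeping rather than genuine arithmetic difficulty: one must correctly reduce the three quadratic exponents modulo $3$ across all fourteen residue classes and confirm, in each, that at least two coprime factors are forced to be cubes so that no surviving condition is a single-factor cube. Once that exponent table is assembled, every class feeds into one of the two already-resolved Diophantine problems, the consecutive-cubes equation or $\gamma=C$, and the non-existence of admissible $\alpha$ follows uniformly.
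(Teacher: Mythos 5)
Your proposal follows essentially the same route as the paper: reduce each residue class modulo $27$ via the general term (\ref{n9}) and the pairwise coprimality of $\alpha$, $\alpha-1$, $\gamma$ to a simultaneous cube condition on at least two of these factors, then dispose of the resulting systems either by the consecutive-cubes equation $\beta_{1}^{3}-\beta_{2}^{3}=1$ (classes forcing both $\alpha$ and $\alpha-1$ to be cubes, the paper's equations (15)--(19)) or by the curve $\beta^{3}-48=(8\alpha-4)^{2}$ governing $\gamma=C$ (the paper's equations (20)--(21)). Your spot checks agree with the paper's tabulated conditions, and your explicit remark that $\alpha\in\{-18,19\}$ makes neither $\alpha$ nor $\alpha-1$ a cube is just a slightly more detailed version of the paper's closing observation.
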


\begin{proof}
If $n\equiv 8,19$ $(27)$, then $h_{n}=C$ iff 
\begin{equation}
\alpha (\alpha -1)=C,  \tag{15}  \label{15}
\end{equation}%
if $n\equiv 10,17$ $(27)$, then $h_{n}=C$ iff 
\begin{equation}
\alpha ^{2}(\alpha -1)^{2}=C,  \tag{16}  \label{16}
\end{equation}%
if $n\equiv 2,25$ $(27)$,$~$then $h_{n}=C$ iff 
\begin{equation}
\alpha ^{2}(\alpha -1)(\alpha ^{2}-\alpha +1)=C,  \tag{17}  \label{17}
\end{equation}%
if $n\equiv $ $5,22$ $(27)$,$~$then $h_{n}=C$ iff 
\begin{equation}
\alpha (\alpha -1)(\alpha ^{2}-\alpha +1)^{2}=C,  \tag{18}  \label{18}
\end{equation}%
and if $n\equiv $ $13,14$ $(27)$,$~$then $h_{n}=C$ iff 
\begin{equation}
\alpha ^{2}(\alpha -1)^{2}(\alpha ^{2}-\alpha +1)^{2}=C.  \tag{19}
\label{19}
\end{equation}%
respectively by (\ref{n9}). These equations lead to trivial equation%
\begin{equation*}
\beta _{1}^{3}-\beta _{2}^{3}=1
\end{equation*}%
where $\alpha =$ $\beta _{1}^{3}$, $\alpha -1=\beta _{2}^{3}$ and $\beta
_{1} $, $\beta _{2}$ are integers. It is clear that the solutions of this
equation do not provide desired $\alpha $.

If $n\equiv 7,20~(27)$ then $h_{n}=C$ iff 
\begin{equation}
\alpha (\alpha ^{2}-\alpha +1)=C,  \tag{20}  \label{20}
\end{equation}%
and if $n\equiv 11,16$~$(27)$ then $h_{n}=C$ iff 
\begin{equation}
(\alpha -1)^{2}(\alpha ^{2}-\alpha +1)=C  \tag{21}  \label{21}
\end{equation}%
respectively by (\ref{n9}). These equations lead to%
\begin{equation*}
\beta ^{3}-48=(8\alpha -4)^{2}
\end{equation*}%
where $\beta $ is an integer. It is clear that the solutions of this
equation do not provide desired $\alpha $.
\end{proof}

\subsection{The Case $N=10$}

We determine square terms in the following theorem.

\begin{theorem}
Let $(h_{n})$ be an elliptic divisibility sequence for which the tenth term
is zero. \newline
i. If $n\equiv 1,9,11,19~(20)$,$~$then $h_{n}=\square $ for all $\alpha \in 
\mathbb{Z}
\backslash \{0,1\}$.\newline
ii. If $n\equiv 4,16~(20),~$then $h_{n}=\square $ iff $-\alpha ^{2}+3\alpha
-1=\square $.\newline
iii. If $n\equiv 5,15~(20),~$then $h_{n}=\square $ iff $\alpha =\square $.
\end{theorem}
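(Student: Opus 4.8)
The plan is to extract each term directly from the closed form (\ref{n10}) for rank $N=10$, whose four factors are $\alpha$, $\alpha-1$, $2\alpha-1$ and $\delta$, where $\delta=\alpha-(\alpha-1)^2=-\alpha^2+3\alpha-1$ is precisely the quantity appearing in part (ii). All three assertions are therefore statements about the parities of the four exponents in (\ref{n10}). For a fixed residue $r$ I would write $n=20m+r$ with $m\in\mathbb{N}$ and read the constants $p,q,k,s$ off the case tables; since these depend only on $n\bmod 10$ and the sign $\varepsilon$ only on $n\bmod 20$, they are constant on each class modulo $20$. Substituting $n=20m+r$ then turns each exponent
$$\frac{21n^2-p}{20},\qquad\frac{9n^2-q}{20},\qquad\frac{2n^2-k}{5},\qquad\frac{5n^2-s}{4}$$
into an explicit quadratic in $m$, and the only information I need is its parity.

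For part (i), with $r\in\{1,9,11,19\}$, the claim reduces to checking that all four exponents are even. This is immediate after substitution: for instance the $\alpha$-exponent equals $42m(10m+1)$ when $n=20m+1$, and similar even expressions arise in the other factors and residues (at the base point $n=9$ one gets exponents $84,36,32,100$, all even). Once every exponent is even, $h_n$ is $\varepsilon$ times a perfect square; because $\square=\pm\beta^2$ in this paper the sign $\varepsilon=\pm1$ is irrelevant, so $h_n=\square$ for all $\alpha\in\mathbb{Z}\setminus\{0,1\}$ with no constraint on $\alpha$.

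For parts (ii) and (iii) the pattern is that exactly one of the four exponents is odd and the remaining three are even. For $r\in\{4,16\}$ the odd exponent is that of $\delta$ (namely $19$ at $n=4$ and $319$ at $n=16$), while for $r\in\{5,15\}$ the odd exponent is that of $\alpha$ (namely $25$ at $n=5$ and $235$ at $n=15$). Writing $h_n=\pm(\text{perfect square})\cdot F$ with $F=\delta$ in case (ii) and $F=\alpha$ in case (iii), I would then invoke the basic observation recorded before the table of equations: the factors $\alpha,\alpha-1,2\alpha-1,\delta$ are pairwise relatively prime (at an integer $\alpha\notin\{0,1\}$ they share no common prime, since $\delta\equiv-1\ (\alpha)$, $\delta\equiv1\ (\alpha-1)$ and $2\alpha-1\mid 4\delta-1$). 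Consequently the square-free part of $|h_n|$ equals that of $|F|$, so $h_n=\square$ if and only if $F=\square$; this is $-\alpha^2+3\alpha-1=\square$ in case (ii) and $\alpha=\square$ in case (iii).

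The exponent computations are entirely mechanical, so the step needing genuine care is the coprimality argument in (ii) and (iii): without pairwise coprimality one could not deduce that $\delta$ (respectively $\alpha$) is a square from the fact that $h_n$ is a square times $\delta^{\mathrm{odd}}$ (respectively $\alpha^{\mathrm{odd}}$), since an odd power of a non-square can be promoted to a square by a square factor sharing its primes. Thus the main obstacle is not the algebra but the clean separation of the four factors, which the relative primality of $\alpha,\alpha-1,2\alpha-1$ and $\delta=-\alpha^2+3\alpha-1$ guarantees; everything else is routine verification analogous to the proofs of Theorems 5.1 and 5.2.
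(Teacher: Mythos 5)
Your proposal is correct and follows the paper's own route: the paper likewise reads the claim directly off the closed form (\ref{n10}), treating case (ii) explicitly and referring cases (i) and (iii) to the exponent-parity computations of Theorems 5.1 and 5.2, and your parity checks (e.g.\ exponents $84,36,32,100$ at $n=9$; odd exponent $19$ on $\delta=-\alpha^{2}+3\alpha-1$ at $n=4$; odd exponent $25$ on $\alpha$ at $n=5$) all agree with the tables. One minor remark: since exactly one of the four factors carries an odd exponent in cases (ii) and (iii), the pairwise coprimality you single out as the essential step is not actually needed here --- if $F$ is an integer and $FB^{2}=\pm\gamma^{2}$ then $F=\pm(\gamma/B)^{2}$ is automatically $\pm$ a rational, hence integer, square --- coprimality only becomes indispensable in the later theorems where two or more factors appear to odd powers and the condition must be split factor by factor.
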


\begin{proof}
The cases (\textit{i}) and (\textit{iii}) can be proved in the same way as
in Theorems 5.1 and 5.2. If $n\equiv 4,16~(20),~$then $h_{n}=\square $ iff%
\begin{equation}
-\alpha ^{2}+3\alpha -1=\square  \tag{22}  \label{22}
\end{equation}%
by (\ref{n10}). This equation leads to trivial equations%
\begin{equation*}
(2\alpha -3)^{2}\pm \beta ^{2}=5
\end{equation*}%
where $\beta $ is an integer. The only solutions of these equations are $%
\alpha =2$ and $3$.
\end{proof}

It is shown that the terms of the EDS for which the tenth term is zero can
not be a square for any choice of $\alpha $ in the following theorem.

\begin{theorem}
Let $(h_{n})$ be an elliptic divisibility sequence for which the tenth term
is zero. If $n\equiv 2,3,6,7,8,12,13,14,17,18~(20),~$then $h_{n}$ is not a
square for all $\alpha \in 
\mathbb{Z}
\backslash \{0,1\}$.
\end{theorem}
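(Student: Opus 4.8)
The plan is to treat all ten residue classes by the uniform substitution-and-parity method already used for the ranks $N=7$ and $N=9$ (Theorems 5.10, 5.12 and 5.13). For a fixed residue $r\in\{2,3,6,7,8,12,13,14,17,18\}$ I would set $n=20k+r$ with $k\in\mathbb{N}$, read off the four class-constants tabulated with (\ref{n10}), and substitute into the general term (\ref{n10}). This expresses $h_{n}$ as $\pm$ a monomial in the four polynomials $\alpha,\ \alpha-1,\ 2\alpha-1,\ \delta$, where $\delta=\alpha-(\alpha-1)^{2}=-\alpha^{2}+3\alpha-1$, whose exponents are explicit quadratics in $k$. Only the parities of these exponents are needed, and each parity is constant along the class, so this step is finite and purely mechanical.

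The next step uses the basic observation of Section 5: the factors $\alpha,\ \alpha-1,\ 2\alpha-1,\ \delta$ are pairwise relatively prime in $\mathbb{Q}[\alpha]$, so $h_{n}$ is a square (in the paper's sense $\square=\pm\beta^{2}$) if and only if the product of those factors occurring with an odd exponent is itself a square. Carrying out the parity bookkeeping, I expect the ten classes to collapse onto exactly three reduced conditions: the classes $n\equiv 6,14~(20)$ give $\alpha(\alpha-1)=\square$ (equation (5)); the classes $n\equiv 2,3,7,13,17,18~(20)$ give $\alpha(\alpha-1)(2\alpha-1)=\square$ (equation (9)); and the classes $n\equiv 8,12~(20)$ give $(2\alpha-1)\delta=(2\alpha-1)(-\alpha^{2}+3\alpha-1)=\square$ (equation 23 of the summary table).

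It then remains to show that none of these three equations has a solution producing an admissible $\alpha\in\mathbb{Z}\setminus\{0,1\}$; the first two are identical to equations already settled in the paper and may be quoted verbatim, since the Diophantine problem is independent of which EDS produced it. For equation (5), completing the square yields the trivial equations $(2\alpha-1)^{2}\pm\beta^{2}=1$, whose only integer solutions force $\alpha\in\{0,1\}$. For equation (9), the substitutions $X=2\alpha$ and $X=-2\alpha$ convert the condition into the Weierstrass models $X^{3}-3X^{2}+2X=\beta^{2}$ and $X^{3}+3X^{2}+2X=\beta^{2}$ (two models arise because $\square=\pm\beta^{2}$); both have rank $0$, so by the Lutz--Nagell theorem their only integral points are torsion points, and a direct check shows none gives a valid $\alpha$.

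The genuine obstacle is equation 23, $(2\alpha-1)(-\alpha^{2}+3\alpha-1)=\square$. Writing $X=-2\alpha$ and $X=2\alpha$ transforms it into $X^{3}+7X^{2}+10X+4=\beta^{2}$ and $X^{3}-7X^{2}+10X-4=\beta^{2}$. The second is again a rank-$0$ curve disposed of by Lutz--Nagell, but the first has positive rank, so its full set of integral points cannot be read off from the torsion and must instead be determined by the \emph{Elliptic Logarithm Method} cited in Section 5. Running that computation, and verifying that the finitely many integral points it returns yield no admissible $\alpha$ for which $h_{n}$ is a square, is the only substantive part of the proof; everything else reduces to parity counting and to quoting the $N=7,8,9$ cases.
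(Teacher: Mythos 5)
Your proposal follows essentially the same route as the paper's own proof: the ten residue classes collapse onto the same three conditions (equation (5) for $n\equiv 6,14$, equation (9) for $n\equiv 2,3,7,13,17,18$, and equation (23) for $n\equiv 8,12$), the first two are dispatched by quoting the proofs of Theorems 5.10 and 5.12, and equation (23) is reduced to the same pair of cubic models, one of rank zero handled by Lutz--Nagell and one of rank one handled by the Elliptic Logarithm Method. The only difference is presentational -- you make the parity bookkeeping and the pairwise-coprimality observation explicit where the paper simply writes out the exponents -- so the argument is correct and matches the paper.
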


\begin{proof}
If $n\equiv 2,3,7,13,17,18~(20)$,$~$then $h_{n}=\square $ iff 
\begin{equation}
\alpha (\alpha -1)(2\alpha -1)=\square  \tag{9}
\end{equation}%
by (\ref{n10}). But this is impossible by proof of Theorem 5.12.

If $n\equiv 6,14~(20),~$then $h_{n}=\square $ iff 
\begin{equation}
\alpha (\alpha -1)=\square  \tag{5}
\end{equation}%
by (\ref{n10}), but this is impossible by proof of Theorem 5.10.

If $n\equiv 8,12~(20)$,$~$then $h_{n}=\square $ iff 
\begin{equation}
(2\alpha -1)(-\alpha ^{2}+3\alpha -1)=\square  \tag{23}  \label{23}
\end{equation}%
by (\ref{n10}). This last equation leads to 
\begin{equation*}
(2\alpha )^{3}-7(2\alpha )^{2}+10(2\alpha )-4=\beta ^{2}
\end{equation*}%
or 
\begin{equation*}
(-2\alpha )^{3}+7(-2\alpha )^{2}+10(-2\alpha )+4=\beta ^{2}
\end{equation*}%
where $\beta $ is an integer. From the first equation we have an elliptic
curve with rank zero and the torsion points on this curve do not provide
desired $\alpha $. From the last equation we have an elliptic curve with
rank 1 and applying the \textit{Elliptic Logarithm Method} we see that the
integral points on this curve do not give desired $\alpha $.
\end{proof}

We give cube terms in the following theorem.

\begin{theorem}
Let $(h_{n})$ be an elliptic divisibility sequence for which the tenth term
is zero. \newline
i. If $n\equiv 1,11,19,29~(30),~$then $h_{n}=C$.\newline
ii. If $n\equiv 3,27~(30),~$then $h_{n}=C$ iff $-\alpha ^{2}+3\alpha -1=C$.%
\newline
iii. If $n\equiv 7,13,17,23~(30),~$then $h_{n}=C$ iff $2\alpha -1=C$.
\end{theorem}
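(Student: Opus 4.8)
The plan is to read all three parts directly off the closed form (\ref{n10}) for the $N=10$ general term, in exactly the spirit of the earlier cube computations. Recall that for $N=10$ the term factors as
\[
h_n=\varepsilon\,\alpha^{(21n^2-p)/20}(\alpha-1)^{(9n^2-q)/20}(2\alpha-1)^{(2n^2-k)/5}\delta^{(5n^2-s)/4},
\]
where $\delta=\alpha-(\alpha-1)^2=-\alpha^2+3\alpha-1$ and $p,q,k,s$ are the residue-dependent constants of Theorem 3.2. Since $\varepsilon=\pm1$ is itself always a cube, and since $\alpha,\alpha-1,2\alpha-1,\delta$ are pairwise coprime for every integer $\alpha\notin\{0,1\}$, the basic observation reduces the cube question to the four exponents modulo $3$: $h_n$ is a cube if and only if the product of those among $\alpha,\alpha-1,2\alpha-1,\delta$ whose exponent is $\not\equiv0~(3)$ is a cube, and by pairwise coprimality this holds if and only if each such surviving factor is itself a cube (the power to which it occurs, being $1$ or $2$ modulo $3$, is immaterial, since $x^2=C$ iff $x=C$).

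First I would fix the correct modulus. Cubes are governed by the exponents modulo $3$, whereas $p,q,k,s$ depend on $n$ modulo $10$; since $\gcd(3,10)=1$ the natural period is $3N=30$. Substituting $n=30m+r$ and expanding, each exponent takes the shape (multiple of $3$) $+$ (term depending only on $r$); for instance the exponent of $\alpha$ equals $945m^2+63mr+(21r^2-p)/20$, and $945\equiv63\equiv0~(3)$, with the identical cancellation occurring for the other three exponents. Hence every exponent modulo $3$ depends only on the residue $r$, and the theorem collapses to a finite check over the residues listed in parts i--iii.

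I would then carry out the substitutions residue by residue. For $r\equiv1,11,19,29~(30)$ all four exponents are $\equiv0~(3)$, so $h_n$ is a cube for every admissible $\alpha$, which is part i. For $r\equiv3,27~(30)$ one has $(p,q,k,s)=(9,21,3,5)$ and the exponents of $\alpha,\alpha-1,2\alpha-1$ are $\equiv0$ while that of $\delta$ is $\equiv1~(3)$; thus only the $\delta$-factor survives and $h_n=C$ iff $\delta=-\alpha^2+3\alpha-1=C$, giving part ii. For $r\equiv7,13,17,23~(30)$ the exponents of $\alpha,\alpha-1,\delta$ are $\equiv0$ while that of $2\alpha-1$ is $\equiv1~(3)$, so only the $(2\alpha-1)$-factor survives and $h_n=C$ iff $2\alpha-1=C$, giving part iii.

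The arithmetic is routine; the one point that genuinely needs justification---and the step I would treat as the main obstacle---is the pairwise coprimality of the four factors, which is what licenses the reduction ``the product of the surviving factors is a cube iff each of them is.'' This I would settle once and for all by the elementary congruences $\delta\equiv-1~(\alpha)$, $\delta\equiv1~(\alpha-1)$ and $4\delta\equiv1~(2\alpha-1)$ (the factor $4$ being harmless since $2\alpha-1$ is odd), together with the coprimality of $\alpha,\alpha-1,2\alpha-1$ already used in the $N=8$ cube case. In contrast to the square-case analysis for $N=10$, no Pell equation, zero-rank elliptic curve, or Elliptic Logarithm computation is needed here: every part collapses either to an unconditional identity or to a single clean cube condition on one factor.
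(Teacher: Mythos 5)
Your proposal is correct and follows essentially the same route as the paper: substitute the residue classes mod $30$ into the closed form (\ref{n10}), reduce the four exponents modulo $3$, and use pairwise coprimality of $\alpha$, $\alpha-1$, $2\alpha-1$, $\delta$ to read off which single factor must be a cube. The paper's own proof dismisses parts (i) and (iii) as ``the same way as in Theorems 5.1 and 5.2'' and, for part (ii), records the equivalence $h_n=C \Leftrightarrow -\alpha^2+3\alpha-1=C$ and then goes one step further than the stated theorem: it converts that condition into the rank-one elliptic curve $\beta^3+80=(8\alpha-12)^2$ and applies the Elliptic Logarithm Method to list the admissible parameters $\alpha=2,3,38,-35$. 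You are right that this extra computation is not needed for the literal ``iff'' in the statement, and your explicit verification of the coprimality congruences ($\delta\equiv-1~(\alpha)$, $\delta\equiv1~(\alpha-1)$, $4\delta\equiv1~(2\alpha-1)$) supplies a justification the paper leaves implicit in its ``basic observation.'' Your exponent computations check out against the table of $p,q,k,s$ for $N=10$, so the argument is complete as stated.
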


\begin{proof}
The cases (\textit{i}) and (\textit{iii}) can be proved in the same way as
in Theorems 5.1 and 5.2. If $n\equiv 3,27~(30)$,$~$then $h_{n}=C$ iff%
\begin{equation}
-\alpha ^{2}+3\alpha -1=C  \tag{24}  \label{24}
\end{equation}%
by (\ref{n10}). This equation leads to%
\begin{equation}
\beta ^{3}+80=(8\alpha -12)^{2}  \label{900}
\end{equation}%
where $\beta $ is an integer. From this equation we have an elliptic curve
with rank $1$ and applying the \textit{Elliptic Logarithm Method} we see
that the only solutions to this equation are $\alpha =2$, $3$, $38$ and $-35$%
.\qquad \newline
\end{proof}

\begin{theorem}
Let $(h_{n})$ be an elliptic divisibility sequence for which the tenth term
is zero. If $n\equiv 2$, $4$, $5$, $6$, $8$, $9$, $12$, $14$, $15$, $16$, $%
18 $, $21$, $22$, $24$, $25$, $26$, $28~(30)$ then $h_{n}\neq C~$for all $%
\alpha \in 
\mathbb{Z}
\backslash \{0,1\}$.
\end{theorem}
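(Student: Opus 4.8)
The plan is to argue exactly as in the preceding not-a-cube results (Theorems 5.14 and 5.18), starting from the closed form (\ref{n10}) for the rank-ten sequence. In that formula $h_n$ is, up to sign, a product of powers of the four factors $\alpha$, $\alpha-1$, $2\alpha-1$ and the quadratic $\delta=-\alpha^2+3\alpha-1$ (the factor written $\delta$ in (\ref{n10}), as it is used in the proofs of Theorems 5.19 and 5.21). For each residue $r$ in the list I would substitute $n=30k+r$ into (\ref{n10}); since $30\equiv 0\pmod 3$, the four exponents are quadratics in $k$ whose values modulo $3$ are constant along the whole class, so one substitution settles the entire congruence class. The only information actually needed is each exponent modulo $3$.

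The decisive reduction is the coprimality observation stated just before the table of equations: the factors $\alpha,\alpha-1,2\alpha-1,\delta$ are pairwise relatively prime over $\mathbb{Q}[\alpha]$, and, by the elementary argument of Note $1$, their integer values share no common prime for admissible $\alpha$. Because a product of pairwise coprime integers is a cube precisely when each factor is a cube, $h_n=C$ holds if and only if every factor whose exponent is $\not\equiv 0\pmod 3$ is individually a cube. Thus each class in the statement collapses to the demand that some subset $S\subseteq\{\alpha,\alpha-1,2\alpha-1,\delta\}$ consist entirely of cubes, and I would group the seventeen residues according to the subset $S$ they produce.

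I expect two kinds of subsets to occur. When $S$ contains two of the linear factors, the simultaneous cube conditions become, via the table, the trivial equation $\beta_1^3-\beta_2^3=1$ (for $\{\alpha,\alpha-1\}$) or the ``classical'' equations $(-\beta_2)^3+2\beta_1^3=1$ and $\beta_2^3+2(-\beta_1)^3=1$ (for $\{\alpha,2\alpha-1\}$ and $\{\alpha-1,2\alpha-1\}$); each of these has only the solution forcing $\beta_1\beta_2=0$, i.e.\ $\alpha\in\{0,1\}$, which is excluded by hypothesis. When $S$ contains the quadratic $\delta$ together with one linear factor, the requirement $\delta=C$ already pins $\alpha$ down to the finite set $\{2,3,38,-35\}$ obtained in the proof of Theorem 5.21 (through $\beta^3+80=(8\alpha-12)^2$ and the Elliptic Logarithm Method), and I would then verify directly that for each of these four values the accompanying linear factor fails to be a cube, so that no admissible $\alpha$ survives.

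The main obstacle is organizational rather than conceptual: one must compute the four exponents modulo $3$ for all seventeen residues and check that each lands in one of the resolved subsets $S$ above. The genuinely delicate point is the $\delta$-classes, where I must confirm value by value over $\{2,3,38,-35\}$ that the extra cube condition on $\alpha$, $\alpha-1$ or $2\alpha-1$ is never satisfied; unlike the purely linear cases this is a finite numerical check, and it is exactly the step at which the theorem would collapse if even one of these four parameters made both factors cubes simultaneously.
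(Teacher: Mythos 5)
Your proposal follows essentially the same route as the paper: substitute $n=30k+r$ into (\ref{n10}), reduce each exponent modulo $3$, invoke the pairwise coprimality of $\alpha$, $\alpha-1$, $2\alpha-1$ and $-\alpha^2+3\alpha-1$ to force each relevant factor to be a cube individually, and then dispose of the resulting conditions via the trivial equation $\beta_1^3-\beta_2^3=1$, the classical equations $(\mp\beta_2)^3\pm 2\beta_1^3=1$, and the elliptic curve $\beta^3+80=(8\alpha-12)^2$. Your explicit final check of $\alpha\in\{2,3,38,-35\}$ against the accompanying linear cube condition is exactly the step the paper compresses into ``the solutions of this equation do not provide desired $\alpha$,'' so the argument is the same in substance.
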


\begin{proof}
If $n\equiv 2,8,22,28~(30),~$then $h_{n}=C$ iff 
\begin{equation}
(\alpha -1)(2\alpha -1)(-\alpha ^{2}+3\alpha -1)=C  \tag{25}  \label{25}
\end{equation}%
by (\ref{n10}). This equation leads to classical equation%
\begin{equation*}
\beta _{2}^{3}+2(-\beta _{1}^{3})=1
\end{equation*}%
where $\alpha -1=\beta _{1}^{3}$, $2\alpha -1=\beta _{2}^{3}$ and $\beta
_{1} $, $\beta _{2}$ are integers and the only solution of this equation is $%
(\beta _{1}$, $\beta _{2})=(-1,-1)$ and this solution does not provide
desired $\alpha $.

If $n\equiv 4,14,16,26~(30),~$then $h_{n}=C$ iff%
\begin{equation}
\alpha (-\alpha ^{2}+3\alpha -1)=C\text{,}  \tag{26}  \label{26}
\end{equation}%
if $n\equiv 9,21~(30),~$then $h_{n}=C$ iff 
\begin{equation}
(2\alpha -1)^{2}(-\alpha ^{2}+3\alpha -1)=C\text{,}  \tag{27}  \label{42}
\end{equation}%
if $n\equiv 12,18~(30),~$then $h_{n}=C$ iff 
\begin{equation}
(\alpha -1)(-\alpha ^{2}+3\alpha -1)^{2}=C\text{,}  \tag{28}  \label{28}
\end{equation}%
by (\ref{n10}) or equivalently $\alpha ^{2}-3\alpha +1=C$. This last
equation leads to equation (\ref{900}). The solutions of this equation do
not provide desired $\alpha $.

If $n\equiv 5,25~(30),~$then $h_{n}=C$ iff 
\begin{equation}
\alpha (\alpha -1)(2\alpha -1)=C\text{,}  \tag{29}  \label{29}
\end{equation}%
if $n\equiv 15~(30),$ then $h_{n}=C$ iff 
\begin{equation}
\alpha (\alpha -1)(-\alpha ^{2}+3\alpha -1)=C.\newline
\tag{30}  \label{30}
\end{equation}%
by (\ref{n10}). These equations leads to trivial equation 
\begin{equation*}
\beta _{1}^{3}-\beta _{2}^{3}=1
\end{equation*}%
where $\alpha =\beta _{1}^{3}$, $\alpha -1=\beta _{2}^{3}$ and $\beta _{1}$, 
$\beta _{2}$ are integers. The solutions of this equation do not provide
desired $\alpha $.

If $n\equiv 6,24~(30),~$then $h_{n}=C$ iff 
\begin{equation}
\alpha (2\alpha -1)^{2}(-\alpha ^{2}+3\alpha -1)^{2}=C.  \tag{31}  \label{31}
\end{equation}%
This equation leads to classical equation 
\begin{equation*}
(-\beta _{2})^{3}+2\beta _{1}^{3}=1
\end{equation*}%
where $\alpha =\beta _{1}^{3}$, $2\alpha -1=\beta _{2}^{3}$ and $\beta _{1}$%
, $\beta _{2}$ are integers. The only solution of this equation is $(\beta
_{1}$, $\beta _{2})=(1,1)$ and this solution does not provide desired $%
\alpha $.
\end{proof}

\subsection{The Case $N=12$}

We determine square terms in the following theorem.

\begin{theorem}
Let $(h_{n})$ be an elliptic divisibility sequence for which the twelfth
term is zero. \newline
i. If $n\equiv 1,23~(24),~$then $h_{n}=\square $.\newline
ii. If $n\equiv 5,19~(24),~$then $h_{n}=\square $ iff $3\alpha ^{2}-3\alpha
+1=\square $.
\end{theorem}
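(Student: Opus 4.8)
The plan is to follow the strategy already used for Theorems 5.1 and 5.2: substitute each residue class into the general term (\ref{n12}) and read off the parity of every exponent, keeping in mind the convention (stated in Section 5) that $\square=\pm\beta^{2}$, so that both the sign $\varepsilon$ and every base raised to an even power may be discarded when deciding whether $h_{n}$ is a $\square$. Since in (\ref{n12}) the five exponents depend only on $n$ modulo $12$ (finer moduli enter only through $\varepsilon$), the first step is to reduce the prescribed residues modulo $12$ and to read $(p,q,k,s,t)$ from the table of values for rank twelve following Theorem 3.2.

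For part (\textit{i}), both $1$ and $23$ are $\equiv 1,11~(12)$, so in either case $(p,q,k,s,t)=(1,59,1,3,1)$. Writing $n=24j+1$ and $n=24j+23$ with $j\in\mathbb{N}$, I would substitute into (\ref{n12}) and evaluate the five exponents $\tfrac{n^{2}-p}{12},\ \tfrac{59n^{2}-q}{24},\ \tfrac{n^{2}-k}{24},\ \tfrac{3n^{2}-s}{8},\ \tfrac{n^{2}-t}{3}$. In each case all five turn out to be even integers; for instance, for $n=24j+1$ they are $48j^{2}+4j,\ 1416j^{2}+118j,\ 24j^{2}+2j,\ 216j^{2}+18j,\ 192j^{2}+16j$, and the residue $n=24j+23$ is entirely analogous. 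Hence $h_{n}$ equals $\pm$ a perfect square, i.e.\ $h_{n}=\square$, with no condition on $\alpha$, which is (\textit{i}).

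For part (\textit{ii}), $5\equiv 5~(12)$ and $19\equiv 7~(12)$, and the table gives $(p,q,k,s,t)=(13,35,1,3,1)$ in both cases. Substituting $n=24j+5$ and $n=24j+19$ into (\ref{n12}), the exponents of $\alpha$, of $2\alpha-1$ and of $\lambda$ come out \emph{odd}, while those of $\alpha-1$ and of $\theta$ come out \emph{even}. The decisive step is to expand the two composite bases into irreducible factors,
\[
\lambda=(3\alpha^{2}-3\alpha+1)(\alpha-2\alpha^{2})=-\,\alpha(2\alpha-1)(3\alpha^{2}-3\alpha+1),\qquad \theta=-(2\alpha^{2}-2\alpha+1),
\]
and to recombine the exponents of the pairwise-coprime irreducibles $\alpha,\ \alpha-1,\ 2\alpha-1,\ 3\alpha^{2}-3\alpha+1,\ 2\alpha^{2}-2\alpha+1$ (each quadratic being irreducible, of discriminant $-3$ resp.\ $-4$). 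Because $\lambda$ contributes exactly one factor each of $\alpha$ and $2\alpha-1$, its odd exponent flips the parities of the (odd) direct exponents of $\alpha$ and $2\alpha-1$, making both even; the even power of $\theta$ supplies only an even power of $2\alpha^{2}-2\alpha+1$; and $\alpha-1$ already occurs to an even power. Modulo squares, $h_{n}$ therefore reduces to an odd power of the single irreducible $3\alpha^{2}-3\alpha+1$.

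To conclude I would invoke the basic observation of Section 5: since the irreducible factors are pairwise relatively prime, $h_{n}=\square$ if and only if every factor occurring to an odd power is itself a $\square$. Here the sole such factor is $3\alpha^{2}-3\alpha+1$, so $h_{n}=\square$ iff $3\alpha^{2}-3\alpha+1=\square$, establishing (\textit{ii}). I expect the one genuinely error-prone point to be the exponent bookkeeping after expanding $\lambda$ and $\theta$: one must check, for both $n=24j+5$ and $n=24j+19$, that the odd contribution of $\lambda$ exactly annihilates the odd exponents of $\alpha$ and $2\alpha-1$ so that $3\alpha^{2}-3\alpha+1$ is the unique survivor, and that the passage from the product to the individual factors is justified by the pairwise coprimality recorded in the Notes (e.g.\ the identity $3(2\alpha^{2}-2\alpha+1)-2(3\alpha^{2}-3\alpha+1)=1$). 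The rest is routine arithmetic.
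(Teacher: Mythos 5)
Your proposal is correct and follows essentially the same route as the paper: substitute the residue classes into the general term (\ref{n12}), check the parity of each exponent (all even for $n\equiv 1,23~(24)$; odd exactly for $\alpha$, $2\alpha-1$, $\lambda$ when $n\equiv 5,19~(24)$), and use the pairwise coprimality of the irreducible factors together with $\lambda=-\alpha(2\alpha-1)(3\alpha^{2}-3\alpha+1)$ to reduce (\textit{ii}) to $3\alpha^{2}-3\alpha+1=\square$ — in fact you supply the exponent bookkeeping that the paper compresses into ``by (\ref{n12})''. The only material the paper includes beyond the stated equivalence is the follow-up Pell-equation analysis $\beta^{2}-3(2\alpha-1)^{2}=1$ showing that infinitely many integers $\alpha$ actually make $3\alpha^{2}-3\alpha+1$ a square, which is not required for the literal ``iff'' in the statement.
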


\begin{proof}
The case (\textit{i}) can be proved in the same way as in Theorem 5.1. If $%
n\equiv 5,19~(24),~$then $h_{n}=\square $ iff%
\begin{equation}
3\alpha ^{2}-3\alpha +1=\square  \tag{32}  \label{32}
\end{equation}%
by (\ref{n12}). It follows that%
\begin{equation}
\beta ^{2}-3(2\alpha -1)^{2}=1  \label{50}
\end{equation}%
\newline
or 
\begin{equation*}
\beta ^{2}+3(2\alpha -1)^{2}=-1\text{,}
\end{equation*}%
where $\beta $ is an integer. The last equation is impossible modulo $3$,
and the first leads to Pell equation. If we rewrite equation (\ref{50}) as%
\begin{equation*}
(\beta +(2\alpha -1)\sqrt{3})(\beta -(2\alpha -1)\sqrt{3})=1
\end{equation*}%
we see that the solutions of this equation are $(2,1),(7,4),(26,15),...$ and
parameters $\alpha $ corresponding to these solutions are $1,\frac{5}{2}%
,8,...$ . It can easily be seen that the only odd solutions of this Pell
equation gives the desired solution of $\alpha $, i.e., for such an $\alpha
, $ $3\alpha ^{2}-3\alpha +1$ is a square and their number is infinite.
\end{proof}

\begin{theorem}
Let $(h_{n})$ be an elliptic divisibility sequence for which the twelfth
term is zero. If $n\equiv 2$, $3$, $4$, $6$, $7$, $8$, $9$, $10$, $11$, $13$%
, $14$, $15$, $16$, $17$, $18$, $20$, $21$, $22$ $(24)$ then $h_{n}\neq
\square ~$for all $\alpha \in 
\mathbb{Z}
\backslash \{0,1\}$.
\end{theorem}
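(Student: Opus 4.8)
The plan is to imitate the proofs of Theorems 5.12 and 5.20: for each residue $r$ in the list I would substitute $n=24k+r$ into the general term (\ref{n12}), read off the exponent of every irreducible factor of $h_n$, and use the pairwise coprimality of these factors to reduce the condition $h_n=\square$ to one of the Diophantine equations already tabulated for $N=12$. Since the sign is immaterial (the paper's $\square$ means $\pm\beta^2$), the whole analysis is a parity computation on exponents.

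First I would rewrite (\ref{n12}) in terms of irreducibles over $\mathbb{Q}[\alpha]$. Because $\lambda=-\alpha(2\alpha-1)(3\alpha^{2}-3\alpha+1)$ and $\theta=-(2\alpha^{2}-2\alpha+1)$, the term $h_n$ is, up to sign, a monomial in the five pairwise relatively prime irreducibles $\alpha$, $\alpha-1$, $2\alpha-1$, $3\alpha^{2}-3\alpha+1$ and $2\alpha^{2}-2\alpha+1$. Writing $L=\{(3n^{2}-s)/8\}$ and $T=\{(n^{2}-t)/3\}$ for the exponents of $\lambda$ and $\theta$, the exponent of $\alpha$ is $\{(n^{2}-p)/12\}+L$, that of $2\alpha-1$ is $\{(n^{2}-k)/24\}+L$, that of $\alpha-1$ is $\{(59n^{2}-q)/24\}$, that of $3\alpha^{2}-3\alpha+1$ is $L$, and that of $2\alpha^{2}-2\alpha+1$ is $T$. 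By the basic observation that the distinct irreducible factors are pairwise relatively prime, $h_n=\square$ holds if and only if the product of exactly those factors occurring to an odd power is $\pm$ a perfect square.

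Next, for each of the eighteen residues I would substitute $n=24k+r$, reduce the five exponents modulo $2$, and record the resulting condition; the symmetry $n\equiv r$ versus $n\equiv 24-r$, which leaves every exponent parity unchanged, halves the labour. The conditions produced fall into the handful of equations already listed for $N=12$: the trivial equation $\alpha(\alpha-1)=\square$, together with equations 33--37, namely $\alpha(3\alpha^{2}-3\alpha+1)=\square$, $(\alpha-1)(2\alpha^{2}-2\alpha+1)=\square$, $(2\alpha-1)(2\alpha^{2}-2\alpha+1)=\square$, and $\alpha(2\alpha-1)(2\alpha^{2}-2\alpha+1)(3\alpha^{2}-3\alpha+1)=\square$. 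It is essential here that none of these eighteen residues produces $3\alpha^{2}-3\alpha+1=\square$ alone (equation 32), which is exactly the soluble case separated off in Theorem 5.23.

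Finally I would dispatch each equation. The trivial equation $\alpha(\alpha-1)=\square$ forces $(2\alpha-1)^{2}\pm\beta^{2}=1$, whose only integer solutions give $\alpha\in\{0,1\}$, as in the proof of Theorem 5.10. Each of equations 33--37 is equivalent to an integral-point problem on an elliptic curve: on the rank-zero curves the integral points are the torsion points, obtained from the Lutz--Nagell theorem (or PARI-GP/MAGMA), none of which yields an admissible $\alpha$; on the rank-one curves the Elliptic Logarithm Method lists all integral points, again with no admissible $\alpha$. The main obstacle is twofold. The routine but voluminous part is the parity bookkeeping across all eighteen residues and the correct matching of each case to its tabulated equation; the genuinely nonelementary part is certifying that the elliptic curves behind equations 33--37 have the asserted ranks and that their integral points carry no admissible $\alpha$, which rests on the rank and integral-point computations rather than on any elementary congruence argument.
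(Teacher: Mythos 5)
Your plan is essentially the paper's own proof: substitute $n=24k+r$ into (\ref{n12}), factor $\lambda=-\alpha(2\alpha-1)(3\alpha^{2}-3\alpha+1)$ and $\theta=-(2\alpha^{2}-2\alpha+1)$, use pairwise coprimality of the five irreducibles to reduce each residue class to one of the tabulated conditions (equation 5 and equations 33--37), and dispatch these via trivial equations, rank-zero curves with Lutz--Nagell, and rank-one curves with the elliptic logarithm method. One caveat: your blanket assertion that the rank-one curves yield no admissible $\alpha$ is not borne out by the paper's own computation for equation 34, i.e.\ $(\alpha-1)(2\alpha-2\alpha^{2}-1)=\square$ arising from $n\equiv 4,8,16,20~(24)$, where the elliptic logarithm method produces $\alpha=-3$ (and indeed $(-4)(-25)=100$ is a square), so a faithful execution of your plan would surface this exceptional value, which the theorem statement (and the paper's proof, which notes ``so we have $\alpha=-3$'' and then drops it) fails to account for.
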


\begin{proof}
If $n\equiv 2,3,10,14,21,22~(24),~$then $h_{n}=\square $ iff 
\begin{equation}
\alpha (2\alpha -1)(2\alpha -2\alpha ^{2}-1)(3\alpha ^{2}-3\alpha +1)=\square
\tag{33}  \label{33}
\end{equation}%
by (\ref{n12}). This equation leads to 
\begin{equation*}
(3\alpha )^{3}-3(3\alpha )^{2}+3(3\alpha )=\beta ^{2}
\end{equation*}%
\newline
or 
\begin{equation*}
\alpha (2\alpha -1)=-\beta ^{2}
\end{equation*}%
where $\beta $ is an integer. From the first equation we have an elliptic
curve with rank 0 and torsion points on this curve do not provide desired $%
\alpha $. The last equation is impossible since $\alpha $ is an integer.

If $n\equiv 4,8,16,20~(24),~$then $h_{n}=\square $ iff 
\begin{equation}
(\alpha -1)(2\alpha -2\alpha ^{2}-1)=\square  \tag{34}  \label{34}
\end{equation}%
by (\ref{n12}). This equation leads to%
\begin{equation*}
(-2\alpha )^{3}+4(-2\alpha )^{2}+6(-2\alpha )+4=\beta ^{2}
\end{equation*}%
or%
\begin{equation*}
(2\alpha )^{3}-4(2\alpha )^{2}+6(2\alpha )-4=\beta ^{2}\text{.}
\end{equation*}%
where $\beta $ is an integer. From the last equation we have an elliptic
curve with rank $0$ and torsion points on this curve do not provide desired $%
\alpha $. From the first equation we have an elliptic curve with rank $1$
and applying the \textit{Elliptic Logarithm Method} we see that the only
solutions to this equation are $\alpha =0$, $1$ and $-3$. So we have $\alpha
=-3$.

If $n\equiv 6,18~(24),~$then $h_{n}=\square $ iff 
\begin{equation}
\alpha (3\alpha ^{2}-3\alpha +1)=\square \text{,}  \tag{35}  \label{35}
\end{equation}%
if $n\equiv 11,13~(24),~$then $h_{n}=\square $ iff 
\begin{equation}
\alpha (\alpha -1)(3\alpha ^{2}-3\alpha +1)=\square .\newline
\tag{36}  \label{36}
\end{equation}%
by (\ref{n12}), or equivalently $\alpha (3\alpha ^{2}-3\alpha +1)=$ $\square 
$. This equation leads to%
\begin{equation*}
(3\alpha )^{3}-3(3\alpha )^{2}+3(3\alpha )=\beta ^{2}
\end{equation*}%
or%
\begin{equation*}
(-3\alpha )^{3}+3(-3\alpha )^{2}+3(-3\alpha )=\beta ^{2}\text{.}
\end{equation*}%
where $\beta $ is an integer. From these equations we have elliptic curves
with rank $0$ and torsion points on these curves do not provide desired $%
\alpha $.

If $n\equiv 7,17~(24),~$then $h_{n}=\square $ iff 
\begin{equation}
\alpha (\alpha -1)=\square .  \tag{5}
\end{equation}%
But this is impossible by proof of Theorem 5.10.

If $n\equiv 9,15~(24),~$then $h_{n}=\square $ iff 
\begin{equation}
(\alpha -1)(2\alpha -1)(2\alpha -2\alpha ^{2}-1)=\square .  \tag{37}
\label{37}
\end{equation}%
by (\ref{n12}). This equation leads to%
\begin{equation*}
(4\alpha )^{3}-6(4\alpha )^{2}+16(4\alpha )-16=\beta ^{2}
\end{equation*}%
or%
\begin{equation*}
(-4\alpha )^{3}+6(-4\alpha )^{2}+16(-4\alpha )+16=\beta ^{2}
\end{equation*}%
where $\beta $ is an integer. From these equations we have elliptic curves
with rank $0$ and torsion points on this curves do not provide desired $%
\alpha $.\newline
\end{proof}

An easy calculation as in Theorems 5.1 and 5.2 gives the following theorem.

\begin{theorem}
Let $(h_{n})$ be an elliptic divisibility sequence for which the twelfth
term is zero. \newline
i. If $n\equiv 1,35~(36),~$then $h_{n}=C$.\newline
ii. If $n\equiv 3,9,15,21,27,33~(36),~$then $h_{n}=C$ iff $\alpha -1=C$.
\end{theorem}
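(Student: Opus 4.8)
The plan is to follow the direct substitution method of Theorems 5.1 and 5.2, applied to the explicit general term (\ref{n12}) for rank $12$. Both parts are decided by reducing, modulo $3$, the five exponents of the factors $\alpha$, $\alpha-1$, $2\alpha-1$, $\lambda$, $\theta$ occurring in (\ref{n12}); the leading sign $\varepsilon=\pm1$ never matters for cubes, since $\pm1$ is itself a cube. Each exponent is a quadratic polynomial in the running index $k$, and its residue modulo $3$ is exactly what governs whether the corresponding factor contributes a cube.

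First I would treat (\textit{i}). The residues $n\equiv 1,35~(36)$ both satisfy $n\equiv 1,11~(12)$, so by the table following Theorem 3.2 the parameters are $(p,q,k,s,t)=(1,59,1,3,1)$ in every case. Writing $n=36k+1$ (and likewise $n=36k+35$) and substituting into (\ref{n12}), the five exponents become
\[
\frac{n^{2}-1}{12},\quad \frac{59(n^{2}-1)}{24},\quad \frac{n^{2}-1}{24},\quad \frac{3(n^{2}-1)}{8},\quad \frac{n^{2}-1}{3},
\]
which for $n=36k+1$ equal $108k^{2}+6k$, $3186k^{2}+177k$, $54k^{2}+3k$, $486k^{2}+27k$ and $432k^{2}+24k$ respectively. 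Every one of these is divisible by $3$, so each factor appears to an exponent $\equiv 0~(3)$ and $h_{n}$ is a product of cubes; hence $h_{n}=C$ with no condition on $\alpha$.

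For (\textit{ii}), the residues $n\equiv 3,9,15,21,27,33~(36)$ all satisfy $n\equiv 3$ or $9~(12)$, for which the table gives $(p,q,k,s,t)=(9,51,9,3,0)$. Substituting $n=36k+r$ for each such $r$, I would verify that the exponents of $\alpha$, $2\alpha-1$, $\lambda$ and $\theta$ are all $\equiv 0~(3)$, whereas the exponent $b=(59n^{2}-51)/24$ of $\alpha-1$ reduces to $\equiv 2~(3)$; for instance with $n=36k+3$ one finds $b=3186k^{2}+531k+20$ and $20\equiv 2~(3)$. Thus $h_{n}=\varepsilon\cdot(\text{cube})\cdot(\alpha-1)^{b}$, so $h_{n}=C$ if and only if $(\alpha-1)^{b}=C$; since $b\equiv 2~(3)$ and $\gcd(2,3)=1$, this is equivalent to $(\alpha-1)^{2}=C$, hence to $\alpha-1=C$.

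Unlike the square-case theorems for $N=12$, which forced Pell equations and integral-point searches on elliptic curves, this statement hides no deep difficulty: everything collapses to the elementary fact that $(\alpha-1)^{2}=C$ implies $\alpha-1=C$. The only point requiring care is the internal consistency of the several moduli: $\varepsilon$ depends on $n~(24)$, the parameters $p,q,k,s,t$ on $n~(12)$, and cube-ness on the exponents modulo $3$, so one must confirm that fixing $n~(36)$ pins all of them down simultaneously, i.e. that the quadratic $(36k+r)^{2}$ produces the same residue pattern of the exponents for every $k$. With that bookkeeping settled the computation is a mechanical expansion, and the remaining residues in each part are handled exactly as the representative cases $n=36k+1$ and $n=36k+3$ above.
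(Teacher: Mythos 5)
Your proof is correct and takes essentially the same route as the paper, which offers no explicit argument for this theorem beyond the remark that ``an easy calculation as in Theorems 5.1 and 5.2'' gives it --- that calculation being exactly the direct substitution into the rank-twelve general term and reduction of the five exponents modulo $3$ that you perform. Your representative expansions for $n=36k+1$ and $n=36k+3$ are accurate, and the final step that $(\alpha-1)^{b}=C$ with $b\equiv 2~(3)$ forces $\alpha-1=C$ correctly settles the ``only if'' direction.
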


\begin{theorem}
Let $(h_{n})$ be an elliptic divisibility sequence for which the twelfth
term is zero. If $n\equiv 2$, $4$, $5$, $6$, $7$, $8$, $10$, $11$, $13$, $14$%
, $16$, $17$, $18$, $19$, $20$, $22$, $23$, $25$, $26$, $28$, $29$, $30$, $%
31 $, $32$, $34~(30)$ then $h_{n}\neq C~$for all $\alpha \in 
\mathbb{Z}
\backslash \{0,1\}$.
\end{theorem}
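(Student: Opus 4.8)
The plan is to follow verbatim the scheme of Theorems 5.22 and 5.24: turn the cube condition on $h_n$ into a cube condition on a product of pairwise coprime polynomial factors of $\alpha$, and then read off the resulting Diophantine equation from the solutions table. Since the cube behaviour of an EDS with $N=12$ has period $36$ (as already used in the preceding theorem), for each residue $r$ in the list I would write $n=36k+r$ with $k\in\mathbb{N}$ and substitute into the general term (\ref{n12}). This yields
\begin{equation*}
h_{36k+r}=\pm\,\alpha^{m_1}(\alpha-1)^{m_2}(2\alpha-1)^{m_3}\lambda^{m_4}\theta^{m_5},
\end{equation*}
where $m_1,\dots,m_5$ are the explicit quadratics in $k$ produced by inserting the tabulated values of $p,q,k,s,t$ for $n\equiv r$ into the five exponent brackets of (\ref{n12}).

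Next I would expand the two composite factors into irreducibles over $\mathbb{Q}[\alpha]$ via $\lambda=-\alpha(2\alpha-1)(3\alpha^2-3\alpha+1)$ and $\theta=-(2\alpha^2-2\alpha+1)$, and collect exponents. Then, up to sign, $h_{36k+r}$ is a product of powers of the five polynomials $\alpha,\ \alpha-1,\ 2\alpha-1,\ 3\alpha^2-3\alpha+1,\ 2\alpha^2-2\alpha+1$. By the basic observation of this section these evaluate, for every integer $\alpha$, to pairwise coprime integers; hence $h_{36k+r}$ is a cube if and only if the product of those factors whose accumulated exponent is $\not\equiv 0~(3)$---each raised to that exponent reduced modulo $3$, with the overall sign absorbed---is itself a cube. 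In this way each of the $25$ residue classes collapses to a condition $\prod_i p_i^{e_i}=C$ with $e_i\in\{1,2\}$, depending only on the exponents modulo $3$.

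I would then group the residues by the condition produced and match each to the appropriate row of the solutions table, namely the equation families already tabulated but not yet consumed for $N=12$. I expect these to be: the conditions forcing $\alpha$ and $\alpha-1$ to be simultaneous cubes (equations 38--42), which give the trivial equation $\beta_1^3-\beta_2^3=1$; the conditions forcing $2\alpha^2-2\alpha+1=C$ (equations 43, 44), which the table reduces to the rank-one curve $\beta^3-4=(4\alpha-2)^2$; and the conditions forcing $\alpha,2\alpha-1$ or $\alpha-1,2\alpha-1$ to be simultaneous cubes (equations 45--49), which give the `classical' equation $x^3+2y^3=1$. In every instance the table records that the integer solutions fail to produce an admissible $\alpha\in\mathbb{Z}\setminus\{0,1\}$, which is exactly the assertion of the theorem.

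The routine but error-prone part is the bookkeeping: for all $25$ residues one must compute the five exponents modulo $3$ correctly and read off the reduced cube condition, exactly as in the proof of Theorem 5.22. The only genuinely hard mathematical input occurs for the residues whose condition is $2\alpha^2-2\alpha+1=C$, where one needs all integral points of the rank-one elliptic curve $\beta^3-4=(4\alpha-2)^2$, obtained through the Elliptic Logarithm Method recorded in equation 43/44 of the table. I would invoke the table's determination of those points, so that no new elliptic-curve computation beyond what the earlier sections already provide is required.
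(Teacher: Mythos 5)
Your proposal follows essentially the same route as the paper's own proof: reduce each residue class mod $36$ via the general term (\ref{n12}) to one of the cube conditions (38)--(49) on products of the pairwise coprime irreducible factors $\alpha$, $\alpha-1$, $2\alpha-1$, $3\alpha^2-3\alpha+1$, $2\alpha^2-2\alpha+1$, and then dispose of them exactly as you describe --- the trivial equation $\beta_1^3-\beta_2^3=1$ for (38)--(42), the rank-one curve $\beta^3-4=(4\alpha-2)^2$ via the Elliptic Logarithm Method for (43)--(44), and the classical equation $x^3+2y^3=1$ for (45)--(49). The grouping of residues and the final appeals to the solutions table match the paper's argument, so the proposal is correct.
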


\begin{proof}
If $n\equiv 2,34~(36),~$then $h_{n}=C$ iff%
\begin{equation}
\alpha (2\alpha -1)(\alpha -1)^{2}(2\alpha -2\alpha ^{2}-1)(3\alpha
^{2}-3\alpha +1)=C\text{,}\newline
\tag{38}  \label{38}
\end{equation}%
if $n\equiv 8,28~(36),~$then $h_{n}=C$ iff 
\begin{equation}
\alpha (\alpha -1)^{2}(2\alpha -1)^{2}=C\text{,}  \tag{39}  \label{39}
\end{equation}%
if $n\equiv 11,25~(36),~$then $h_{n}=C$ iff 
\begin{equation}
\alpha (\alpha -1)(2\alpha -1)^{2}(2\alpha -2\alpha ^{2}-1)^{2}=C\text{,}%
\newline
\tag{40}  \label{40}
\end{equation}%
if $n\equiv 13,23~(36),~$then $h_{n}=C$ iff 
\begin{equation}
\alpha ^{2}(\alpha -1)^{2}(2\alpha -1)(2\alpha -2\alpha ^{2}-1)^{2}=C,%
\newline
\tag{41}  \label{441}
\end{equation}%
if $n\equiv 17,19~(36),~$then $h_{n}=C$ iff%
\begin{equation}
\alpha ^{2}(\alpha -1)=C  \tag{42}  \label{442}
\end{equation}%
by (\ref{n12}). These equations lead to trivial equation 
\begin{equation*}
\beta _{1}^{3}-\beta _{2}^{3}=1
\end{equation*}%
where $\alpha =\beta _{1}^{3}$, $\alpha -1=\beta _{2}^{3}$ and $\beta _{1}$, 
$\beta _{2}$ are integers. The solutions of this equation do not provide
desired $\alpha $.

If $n\equiv 4,32~(36),~$then $h_{n}=C$ iff 
\begin{equation}
(\alpha -1)(2\alpha -2\alpha ^{2}-1)^{2}=C\text{,}\newline
\tag{43}  \label{43}
\end{equation}%
if $n\equiv 14,22~(36),~$then $h_{n}=C$ iff%
\begin{equation}
\alpha ^{2}(2\alpha -2\alpha ^{2}-1)^{2}(3\alpha ^{2}-3\alpha +1)=C.\newline
\tag{44}  \label{44}
\end{equation}%
by (\ref{n12}). These equations lead to 
\begin{equation*}
\beta ^{3}-4=(4\alpha -2)^{2}
\end{equation*}%
where $\beta $ is an integer. From this equation we have an elliptic curve
with rank $1$ and applying the \textit{Elliptic Logarithm Method} we see
that the integral points on this curve do not provide desired $\alpha .$

If $n\equiv 5,31~(36),~$then $h_{n}=C$ iff 
\begin{equation}
\alpha (2\alpha -1)(2\alpha -2\alpha ^{2}-1)^{2}=C\text{,}  \tag{45}
\label{45}
\end{equation}%
if $n\equiv 16,20~(36),~$then $h_{n}=C$ iff 
\begin{equation}
\alpha ^{2}(2\alpha -1)(2\alpha -2\alpha ^{2}-1)=C\text{,}  \tag{46}
\label{46}
\end{equation}%
by (\ref{n12}). These equations lead to classical equation 
\begin{equation*}
(-\beta _{2})^{3}+2\beta _{1}^{3}=1
\end{equation*}%
where $\alpha =\beta _{1}^{3}$, $2\alpha -1=\beta _{2}^{3}$ and $\beta _{1}$%
, $\beta _{2}$ are integers. The only solution of this equation is $(\beta
_{1}$, $\beta _{2})=(1,1)$ and this solution does not provide desired $%
\alpha $.

If $n\equiv 6,18,30~(36),~$then $h_{n}=C$ iff 
\begin{equation}
(\alpha -1)^{2}(2\alpha -1)^{2}(3\alpha ^{2}-3\alpha +1)=C\text{,}\newline
\tag{47}  \label{47}
\end{equation}%
if $n\equiv 7,29~(36),~$then $h_{n}=C$ iff 
\begin{equation}
(\alpha -1)^{2}(2\alpha -1)^{2}(2\alpha -2\alpha ^{2}-1)=C\text{,}\newline
\tag{48}  \label{48}
\end{equation}%
if $n\equiv 10,26~(36),~$then $h_{n}=C$ iff 
\begin{equation}
(\alpha -1)(2\alpha -1)^{2}(3\alpha ^{2}-3\alpha +1)=C\text{,}  \tag{49}
\label{49}
\end{equation}%
by (\ref{n12}). These equations lead to classical equation 
\begin{equation*}
\beta _{2}{}^{3}+2(-\beta _{1})^{3}=1
\end{equation*}%
where $\alpha -1=\beta _{1}^{3}$, $2\alpha -1=\beta _{2}^{3}$ and $\beta
_{1} $, $\beta _{2}$ are integers. The solution of this equation is $(\beta
_{1}$, $\beta _{2})=(-1,-1)$ and this solution does not provide desired $%
\alpha $.
\end{proof}

\textbf{Acknowledgements. }\textit{The author would like to thank Joseph H.
Silverman for looking at the initial draft and making suggestions for the
first parts of the paper. Also I am grateful to Nikos Tzanakis for his
helpful suggestions for improving the paper.}

\end{document}